\newcommand{\cmark}{\ding{51}}
\newcommand\xrowht[2][0]{\addstackgap[.5\dimexpr#2\relax]{\vphantom{#1}}}
\newcommand{\R}{\mathbb{R}}
\newcommand{\C}{\mathbb{C}}
\newcommand{\cm}[1]{\ignorespaces}
\newcommand{\frs}[1]{\mathfrak{s}_{#1}}
\newcommand{\nj}[3]{f^{#3}(N^J(f_{#1},f_{#2}))}
\newcommand{\PreserveBackslash}[1]{\let\temp=\\#1\let\\=\temp}
\newcolumntype{C}[1]{>{\PreserveBackslash\centering}p{#1}}
\newcolumntype{R}[1]{>{\PreserveBackslash\raggedleft}p{#1}}
\newcolumntype{L}[1]{>{\PreserveBackslash\raggedright}p{#1}}
\newcommand{\mlines}[2][c]{%
\begin{tabular}[#1]{@{}c@{}}#2\end{tabular}}
\theoremstyle{plain}
\newtheorem{theorem}{Theorem}[section]
\newtheorem*{theorem*}{Theorem}
\newtheorem{corollary}[theorem]{Corollary}
\newtheorem{proposition}[theorem]{Proposition}
\newtheorem*{proposition*}{Proposition}
\theoremstyle{definition}
\newtheorem{remark}[theorem]{Remark}
\numberwithin{equation}{section}
\begin{document}

\title[Hermitian structures on six-dimensional almost nilpotent solvmanifolds]{Hermitian structures on six-dimensional \\ almost nilpotent  solvmanifolds}

\author{Anna Fino}
\address[A. Fino]{Dipartimento di Matematica ``G. Peano''\\
	Universit\`a di Torino\\
	Via Carlo Alberto 10\\
	10123 Torino, Italy \& Department of Mathematics and Statistics, Florida International University, 33199 Miami, Florida, USA} 
\email{annamaria.fino@unito.it, afino@fiu.edu}

\author{Fabio Paradiso}
\address[F. Paradiso]{Dipartimento di Matematica ``G. Peano''\\
	Universit\`a di Torino\\
	Via Carlo Alberto 10\\
	10123 Torino, Italy} \email{fabio.paradiso@unito.it}

\subjclass[2020]{22E25, 53C15, 53C30, 53C55}
\keywords{Almost nilpotent Lie groups, Solvmanifolds, Complex structures,  Hermitian metrics, SKT structures, Balanced structures}

\begin{abstract} 
We complete the classification  of  six-dimensional strongly unimodular almost nilpotent Lie algebras  admitting complex structures. For several cases we  describe the space of complex structures  up to isomorphism. As a consequence we determine   the six-dimensional  almost  nilpotent solvmanifolds admitting an invariant complex structure and study the existence of special types of  Hermitian metrics, including SKT, balanced, locally conformally K\"ahler, and  strongly Gauduchon metrics. In particular, we determine new balanced solvmanifolds and confirm a conjecture by the first author and Vezzoni regarding SKT and balanced structures in the six-dimensional strongly unimodular almost nilpotent case. Moreover, we prove some negative results regarding complex structures tamed by symplectic forms, showing in particular that  in every  dimension such structures cannot exist on non-K\"ahler almost abelian Lie algebras. 
\end{abstract} 

\maketitle


\section{Introduction}

A Hermitian structure on a  $2n$-dimensional smooth manifold $M$ is defined as a pair $(J,g)$, where $J$ is an integrable almost complex structure which is compatible with a Riemannian metric $g$ on $M$, namely $g(J\cdot,J\cdot)=g(\cdot, \cdot)$. When the fundamental form $\omega (\cdot, \cdot) \coloneqq g(J\cdot,\cdot)$ is closed, or equivalently  $J$  is  parallel with respect to the Levi-Civita connection relative to $g$,  the Hermitian structure $(J,g)$ is K\"ahler.  An  ever-growing interest has been placed on generalizations of the K\"ahler condition, whose inception stems from both the desire to equip non-K\"ahler complex manifolds with special (hopefully canonical) Hermitian metrics and from their applications in theoretical physics, demanding their investigation.

Two of the most studied ones are the  \emph{strong K\"ahler with torsion} (SKT for short)  or  \emph{pluriclosed} condition, defined by the closure of $H \coloneqq d^c \omega=i(\overline\partial -\partial) \omega = Jd\omega$, and the \emph{balanced} condition, characterized by the coclosure of $\omega$ or equivalently by the closure of $\omega^{n-1}$. The SKT and balanced conditions are incompatible one with the other, in the sense that a Hermitian metric which is both SKT and balanced is necessarily K\"ahler, and  it has even been conjectured (and confirmed in some special cases) that a compact complex manifold admitting both SKT and balanced Hermitian metrics must also admit K\"ahler metrics.

Other important generalizations of the K\"ahler condition include: 
\begin{itemize}
\item the \emph{balanced} condition, characterized by the closure of  $\omega^{n-1}$,
\item the \emph{locally conformally balanced} (LCB) condition, characterized by the closure of the \emph{Lee form} $\theta$, which is  the unique $1$-form satisfying $d\omega^{n-1}=\theta \wedge \omega^{n-1}$,
\item the \emph{locally conformally K\"ahler} (LCK) condition, which holds when $d\omega=\frac{1}{n-1} \theta \wedge \omega$,
\item the \emph{locally conformally SKT} (LCSKT) condition, introduced in \cite{DFFL} and characterized by the condition $dH=\mu \wedge H$, with  $\mu$ a non-zero closed $1$-form,
\item the \emph{1\textsuperscript{st}-Gauduchon} condition, characterized by the condition  $\partial \overline\partial \omega \wedge \omega^{n-2}=0$,
\item the \emph{strongly Gauduchon} condition,  characterized by $\partial \omega^{n-1}$ being $\overline\partial$-exact.
\end{itemize}
We recall that the SKT condition implies the 1\textsuperscript{st}-Gauduchon condition, the LCK or balanced condition implies the LCB condition, while the balanced condition implies the strongly Gauduchon condition.

Another property which has gathered interested in recent literature is the \emph{tamed} condition for complex structures: given a complex structure $J$ and a symplectic form $\Omega$ on $M$, $J$ is said to be tamed by $\Omega$ if the $(1,1)$-part of $\Omega$ is positive. This condition generalizes the K\"ahler condition, since the fundamental form of a K\"ahler structure $(J,g)$ trivially tames $J$. In \cite{EFV}, it has been proven that the existence of a symplectic form $\Omega$ taming $J$ is equivalent to the existence of a $J$-Hermitian metric whose fundamental form $\omega$ satisfies $\partial \omega = \overline\partial \beta$ for some $\partial$-closed $(2,0)$-form $\beta$. In particular, the Hermitian metric has to be SKT.

A natural setting in which to study the previously-mentioned types of Hermitian structures is provided by  \emph{nilmanifolds} or \emph{solvmanifolds}, i.e. by  compact  quotients $\Gamma \backslash G$  of solvable or nilpotent Lie groups by  cocompact \emph{lattices}, since left-invariant structures on $G$ naturally descend to locally homogeneous structures on such quotients. 
A  solvable Lie group $G$  is called  \emph{almost nilpotent}  if  its  nilradical has codimension one.  In particular,  if the nilradical is abelian,  $G$ is  \emph{almost abelian}. Useful criteria  for the existence of lattices of almost nilpotent Lie groups are given in \cite{Bock}.

For a solvable Lie   group $G$ a necessary condition for the existence of cocompact lattices   is the strongly unimodularity of the Lie algebra $\mathfrak{g}$  of $G$, i.e. that   for every $X \in \mathfrak{g}$ and every $k \in \mathbb{N}$, one has $\operatorname{tr} \text{ad}_{X} \rvert_{\mathfrak{n}^k / \mathfrak{n}^{k+1}} =0$, where $\mathfrak{n}$ is the nilradical of $\mathfrak{g}$ (see  \cite{Gar}).

Real dimension six is of particular interest, due to it often being the first non-trivial dimension in which to look for certain special structures and for its theoretical physical applications. Moreover, nilpotent and solvable Lie groups are fully classified in dimension six (see  \cite{Mub1, Mub2, Mub3, SW, Sha}).

In the nilpotent case, most of the previously defined structures have been thoroughly investigated in literature during the last decades (see \cite{DFFL, FPS, OOS, Sal, Saw, Uga}), while only partial results exist thus far in the general solvable case. Recently, almost abelian Lie groups have gathered notable interest, leading to the development of several characterization and classification results for special Hermitian structures (see \cite{LW} for the K\"ahler case, \cite{AL, FP_gk} for the SKT case, \cite{FP_bal, Puj} for the balanced case and \cite{AO, Par} for the LCK and LCB case). Furthermore, in \cite{FS}, the authors obtained results regarding $2$-step solvable Lie groups, of which almost abelian ones are a special class. Another special class of almost nilpotent Lie groups has been studied in \cite{FP_alm}, where characterizations and classification results regarding complex, SKT and balanced structures were obtained for almost nilpotent Lie groups whose nilradical has one-dimensional commutator.  In the paper  we  extend and refine these results, completing the classification of six-dimensional strongly unimodular almost nilpotent Lie algebras admitting complex structures and the aforementioned types of special Hermitian structures.  This involves exploiting the technical machinery developed in the previous literature regarding almost  (nilpotent) abelian  Lie algebras, as well as some different kinds of computations, as we shall explain. In particular, we determine that the previous classification results in \cite{FP_gk, FP_alm} exhaust the six-dimensional strongly unimodular almost nilpotent Lie algebras admitting SKT structures, while, with respect to \cite{FP_alm, FP_bal}, we were able to find two more Lie algebras, up to isomorphisms, admitting balanced structures. Moreover, we proved that, in the six-dimensional almost nilpotent strongly unimodular case, a Lie algebra admitting strongly Gauduchon structures always admits balanced structures: it would be interesting to see if this result holds in higher dimensions as well.

In Section \ref{sec_cpx}, we start by classifying six-dimensional strongly unimodular almost nilpotent Lie algebras admitting complex structures, while, in Section \ref{sec_mod}, we delve deeper into the subject by providing a classification up to automorphisms of complex structures on some of these Lie algebras. In Section \ref{sec_1stG}, we then provide a full classification result regarding 1\textsuperscript{st}-Gauduchon structures and the same is done for SKT and LCSKT structures in Section \ref{sec_SKT}.  In particular, we determine the first example of compact manifold admitting an LCSKT structure with non-degenerate torsion. The results of  Section \ref{sec_SKT}  are then used in Section \ref{sec_tamed} to investigate the existence of complex structures tamed by symplectic forms, showing that such structures can never exist on non-K\"ahler almost abelian Lie algebras and  thus improving the partial results obtained in \cite{FK, FKV}. We then go back to classifying Lie algebras admitting special Hermitian metrics, studying LCB and LCK structures in Section \ref{sec_LCBLCK} and strongly Gauduchon and balanced structures in Section \ref{sec_bal}.

\medskip

\noindent
\emph{Acknowledgments}. The authors would like to thank Adri\'an Andrada for useful comments. The authors are  partially supported by Project PRIN 2017 “Real and complex manifolds: Topology, Geometry and Holomorphic Dynamics” and  by GNSAGA (Indam). The first author is also supported  by a grant from the Simons Foundation (\#944448).

\section{Classification} \label{sec_cpx}
We begin our discussion by completing the classification of six-dimensional strongly unimodular almost nilpotent Lie algebras admitting a complex structure. To fix our notations, we recall that a complex structure $J$ on a Lie algebra $\mathfrak{g}$ is an endomorphism of $\mathfrak{g}$ squaring to $-\text{Id}_{\mathfrak{g}}$ and satisfying the integrability condition $N^J=0$, where $N^J \in \Lambda^2 \mathfrak{g}^* \otimes \mathfrak{g}$ is the \emph{Nijenhuis tensor} associated with $J$, whose expression is
\[
N^J(X,Y)=[X,Y]-J[JX,Y]-J[X,JY]-[JX,JY], \quad X,Y \in \mathfrak{g}.
\]
The left-invariant extension of such $J$ on the Lie group $G$ corresponding to $\mathfrak{g}$ constitutes an integrable complex structure on $G$.

Six-dimensional strongly unimodular almost nilpotent Lie algebras admitting complex structures have already been classified in the cases where the commutator of the five-dimensional nilradical is either zero-dimensional ($\mathfrak{n} \cong \R^5$, namely the \emph{almost abelian} case) or one-dimensional, corresponding to $\mathfrak{n}$ being isomorphic to either $\mathfrak{h}_3 \oplus \R^2$ or $\mathfrak{h}_5$.   Here, for every positive integer $k$, $\mathfrak{h}_{2k+1}=\R\left<x_1,\ldots,x_k,y_1,\ldots,y_k,z\right>$ denotes the $2k+1$-dimensional Heisenberg Lie algebra whose Lie bracket is defined by $[x_j,y_j]=z$, $j=1,\ldots,k$. For the sake of simplicity, we shall refer to such nilradicals as being of \emph{Heisenberg-type}.

Therefore, it remains to analyze the case where the commutator of the nilradical is at least two-dimensional.
Looking at the classification results in \cite{SW}, these Lie algebras can be grouped into eight classes of isomorphism, listed in Table \ref{table-new}. In this table and in the rest of the paper, Lie algebras are indicated using the naming convention of \cite{SW} (except for abelian Lie algebras $\R^k$ and Heisenberg Lie algebras $\mathfrak{h}_{2k+1}$) and are identified via their structure equations. For example, the notation
\[
\mathfrak{s}_{6.140}^{-1}=(f^{35}+f^{16},f^{45}-f^{26},f^{36},-f^{46},0,0)
\]
means that the Lie algebra $\mathfrak{s}_{6.140}^{-1}$ admits a basis $\{f_1,\ldots,f_6\}$ whose dual coframe $\{f^1,\ldots,f^6\}$ satisfies the following equations with respect to the Chevalley-Eilenberg differential:
\begin{gather*}
df^1=f^{35}+f^1 \wedge f^6,\quad df^2=f^4 \wedge f^5-f^2 \wedge f^6, \quad df^3= f^3 \wedge f^6,\\ df^4=-f^4 \wedge f^6,\quad df^5=df^6=0.
\end{gather*}

As one can see, the remaining possible nilradicals are all isomorphic to either
\[
\mathfrak{n}_{5.1}=(f^{35},f^{45},0,0,0), \quad \text{or} \quad
\mathfrak{n}_{5.2}=(f^{35},f^{34},f^{45},0,0).
\]

\begin{remark} \label{rem_152}
Note  that the two Lie algebras $\frs{6.152}^{\varepsilon=\pm 1}$ in \cite{SW} are actually isomorphic one to the other, as an isomorphism between the two is exhibited by changing the sign of the basis elements $e_1$, $e_3$, $e_4$, $e_6$. For this reason, in this paper $\frs{6.152}$ represents only one isomorphism class and has no free parameters. Moreover, such Lie algebra is isomorphic to the Lie algebra
\[
\mathfrak{g}_9=\left( e^{45},e^{15}+e^{36},e^{14}-e^{26}+e^{56},-e^{56},e^{46},0 \right)
\]
in \cite{FOU}, as one can obtain these structure equations starting from the ones of $\frs{6.152}$ in Table \ref{table-new} and setting
\[
e_1=f_3, \quad e_2=-f_2, \quad e_3=f_1, \quad e_4=f_5, \quad e_5=f_2-f_4, \quad e_6=f_6.
\]
It then follows from \cite{FOU} that the Lie algebra $\frs{6.152}$ admits complex structures with closed $(3,0)$-form.
\end{remark}

\begin{table}[H]
\begin{center}
\addtolength{\leftskip} {-2cm}
\addtolength{\rightskip}{-2cm}
\scalebox{0.85}{
\begin{tabular}{|l|l|l|c|}
\hline \xrowht{15pt}
Name& Structure equations & Nilradical & Step of solvability\\
\hline \hline
\xrowht{20pt}
$\mathfrak{s}_{6.140}^{-1}$ & $(f^{35}+f^{16},f^{45}-f^{26},f^{36},-f^{46},0,0)$  & $\mathfrak{n}_{5.1}=(f^{35},f^{45},0,0,0)$ & 2 \\ \hline
\xrowht{20pt}
$\mathfrak{s}_{6.145}^{0}$ & $(f^{35}+f^{26},f^{45}-f^{16},f^{46},-f^{36},0,0)$  & $\mathfrak{n}_{5.1}=(f^{35},f^{45},0,0,0)$ & 2 \\ \hline
\xrowht{20pt}
$\mathfrak{s}_{6.146}^{-1}$ & $(f^{35}+f^{16},f^{45}-f^{16}+f^{36},f^{36},-f^{46},0,0)$  & $\mathfrak{n}_{5.1}=(f^{35},f^{45},0,0,0)$ & 2 \\ \hline
\xrowht{20pt}
$\mathfrak{s}_{6.147}^{0}$ & $(f^{35}+f^{26}+f^{36},f^{45}-f^{16}+f^{46},f^{46},-f^{36},0,0)$  & $\mathfrak{n}_{5.1}=(f^{35},f^{45},0,0,0)$ & 2 \\ \hline
\xrowht{20pt}
$\mathfrak{s}_{6.151}$ & $(f^{35}+f^{16},f^{34}-f^{26}-f^{46},f^{45},-f^{46},f^{56},0)$  & $\mathfrak{n}_{5.2}=(f^{35},f^{34},f^{45},0,0)$ & 3 \\ \hline
\xrowht{20pt}
$\mathfrak{s}_{6.152}$ & $(f^{35}+f^{26},f^{34}-f^{16} + f^{56},f^{45},-f^{56},f^{46},0)$  & $\mathfrak{n}_{5.2}=(f^{35},f^{34},f^{45},0,0)$ & 3 \\ \hline
\xrowht{20pt}
$\mathfrak{s}_{6.154}^0$ & $(f^{35}+f^{26},f^{34}-f^{16},f^{45},-f^{56},f^{46},0)$  & $\mathfrak{n}_{5.2}=(f^{35},f^{34},f^{45},0,0)$ & 3 \\ \hline
\xrowht{20pt}
$\mathfrak{s}_{6.155}^1$ & $(f^{35}+f^{16},f^{34}-f^{26},f^{45},-f^{46},f^{56},0)$  & $\mathfrak{n}_{5.2}=(f^{35},f^{34},f^{45},0,0)$ & 3 \\ \hline
\end{tabular}}
\caption{Six-dimensional strongly unimodular almost nilpotent Lie algebras with nilradical having commutator of dimension at least two.} \label{table-new}
\end{center}
\end{table}

\begin{theorem} \label{cpx-class}
Let $\mathfrak{g}$ be a six-dimensional strongly unimodular almost nilpotent Lie algebra.  Then, $\mathfrak{g}$ admits complex structures if and only if it is isomorphic to one among
\begin{itemize} [leftmargin=0.5cm]
\item[] $\frs{3.3}^0 \oplus \R^3=\left(f^{26},-f^{16},0,0,0,0\right)$, \smallskip
\item[] $\mathfrak{h}_3 \oplus \mathfrak{s}_{3.3}^0=(f^{23},0,0,f^{56},-f^{46},0)$, \smallskip
\item[] $\frs{4.3}^{-\frac{1}{2},-\frac{1}{2}} \oplus \R^2=\left(f^{16},-\frac{1}{2} f^{26},-\frac{1}{2} f^{36},0,0,0\right)$, \smallskip
\item[] $\frs{4.5}^{p,-\frac{p}{2}} \oplus \R^2=\left( pf^{16},-\frac{p}{2} f^{26} + f^{36},-f^{26}-\frac{p}{2} f^{36},0,0,0 \right)$, $p>0$, \smallskip
\item[] $\frs{4.6} \oplus \R^2=(f^{23},f^{26},-f^{36},0,0,0)$, \smallskip
\item[] $\frs{4.7}\oplus \R^2=(f^{23},f^{36},-f^{26},0,0,0)$,\smallskip
\item[] $\frs{5.4}^0 \oplus \R = \left( f^{26},0,f^{46},-f^{36},0,0 \right)$, \smallskip
\item[] $\frs{5.8}^0 \oplus \R = \left( f^{26} +f^{36},-f^{16}+f^{46},f^{46},-f^{36},0,0 \right)$, \smallskip
\item[] $\frs{5.9}^{1,-1,-1} \oplus \R = \left( f^{16},f^{26},-f^{36},-f^{46},0,0 \right)$, \smallskip
\item[] $\frs{5.11}^{p,p,-p} \oplus \R = \left( pf^{16}, pf^{26}, -pf^{36} +f^{46},-f^{36} -pf^{46},0,0 \right)$, $p>0$, \smallskip
\item[] $\frs{5.13}^{p,-p,r} \oplus \R = \left( pf^{16} + f^{26}, -f^{16} +pf^{26}, -pf^{36} +rf^{46},-rf^{36} -pf^{46},0,0 \right)$, $r>0$, \smallskip
\item[] $\frs{5.16} \oplus \R=(f^{23}+f^{46},f^{36},-f^{26},0,0,0)$, \smallskip
\item[] $\frs{6.14}^{-\frac{1}{4},-\frac{1}{4}} = \left( -\frac{1}{4} f^{16} + f^{26}, -\frac{1}{4} f^{26}, -\frac{1}{4} f^{36} + f^{46}, -\frac{1}{4} f^{46}, f^{56}, 0 \right)$, \smallskip
\item[] $\frs{6.16}^{p,-4p}= \left( pf^{16} + f^{26} + f^{36}, -f^{16} +pf^{26} +f^{46}, pf^{36} + f^{46}, -f^{36} +pf^{46}, -4pf^{56}, 0 \right)$, $p<0$, \smallskip
\item[] $\frs{6.17}^{1,q,q,-2(1+q)}=\left( f^{16},f^{26}, qf^{36}, qf^{46}, -2(1+q)f^{56}, 0 \right)$, $0<|q|\leq 1$, $q \neq -1$, \smallskip
\item[] $\frs{6.18}^{1,-\frac{3}{2},-\frac{3}{2}}=\left( f^{16} + f^{26}, f^{26}, f^{36}, -\frac{3}{2} f^{46}, -\frac{3}{2} f^{56}, 0 \right)$, \smallskip
\item[] $\frs{6.19}^{p,p,q,-p-\frac{q}{2}}=\left( pf^{16}, pf^{26}, qf^{36}, -\left(p+\frac{q}{2} \right) f^{46} + f^{56}, -f^{46} -\left(p+\frac{q}{2}\right) f^{56}, 0 \right)$, $p,q \neq 0$, \smallskip
\item[] $\frs{6.20}^{p,p,-\frac{3}{2}p}= \left( pf^{16}+f^{26},pf^{26},pf^{36},-\frac{3}{2}pf^{46}+f^{56},-f^{46}-\frac{3}{2}pf^{56},0 \right)$, $p>0$, \smallskip
\item[] $\frs{6.21}^{p,q,r,-2(p+q)}=\left( pf^{16} +f^{26},-f^{16} + pf^{26}, qf^{36} + rf^{46},-rf^{36} +qf^{46}, -2(p+q)f^{56}, 0 \right)$, $|p| \geq |q|$,
\item[] \hspace{34.2em}$q \neq -p$, $r > 0$, \smallskip
\item[] $\frs{6.25}=(f^{23},f^{36},-f^{26},0,f^{46},0)$, \smallskip
\item[] $\frs{6.44}=(f^{23},f^{36},-f^{26},f^{26}+f^{56},f^{36}-f^{46},0)$, \smallskip
\item[] $\frs{6.51}^{p,0}=(f^{23},pf^{26},-pf^{36},f^{56},-f^{46},0)$, $p>0$, \smallskip
\item[] $\frs{6.52}^{0,q}=(f^{23},f^{36},-f^{26},qf^{56},-qf^{46},0)$, $q>0$, \smallskip
\item[] $\frs{6.145}^{0}=(f^{35}+f^{26},f^{45}-f^{16},f^{46},-f^{36},0,0)$, \smallskip
\item[] $\frs{6.147}^{0}=(f^{35}+f^{26},f^{45}-f^{16}+f^{36},f^{46},-f^{36},0,0)$, \smallskip
\item[] $\frs{6.152}=(f^{35}+f^{26},f^{34}-f^{16} + f^{56},f^{45},-f^{56},f^{46},0)$, \smallskip
\item[] $\frs{6.154}^0=(f^{35}+f^{26},f^{34}-f^{16},f^{45},-f^{56},f^{46},0)$, \smallskip
\item[] $\frs{6.158}=(f^{24}+f^{35},0,f^{36},0,-f^{56},0)$, \smallskip
\item[] $\frs{6.159}=(f^{24}+f^{35},0,-f^{56},0,f^{36},0)$, \smallskip
\item[] $\frs{6.162}^{1}=(f^{24}+f^{35},f^{26},f^{36},-f^{46},-f^{56},0)$, \smallskip
\item[] $\frs{6.164}^p=(f^{24}+f^{35},pf^{26},f^{56},-pf^{46},-f^{36},0)$, $p >0$, \smallskip
\item[] $\frs{6.165}^p=(f^{24}+f^{35},pf^{26}+f^{36},-f^{26}+pf^{36},-pf^{46}+f^{56},-f^{46}-pf^{56},0)$, $p>0$,\smallskip
\item[] $\frs{6.166}^p=(f^{24}+f^{35},-f^{46},-pf^{56},f^{26},pf^{36},0)$, $0<|p|\leq 1$, \smallskip
\item[] $\frs{6.167}=(f^{24}+f^{35},-f^{36},-f^{26},f^{26}+f^{56},f^{36}-f^{46},0)$.
\end{itemize}
\end{theorem}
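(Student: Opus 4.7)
The proof splits according to the dimension of the commutator $[\mathfrak{n},\mathfrak{n}]$ of the nilradical $\mathfrak{n}$ of $\mathfrak{g}$. The plan for the cases $\dim[\mathfrak{n},\mathfrak{n}]\leq 1$ is to quote the classifications already obtained in \cite{FP_gk} (the almost abelian case, $\mathfrak{n}\cong\R^5$) and \cite{FP_alm} (the Heisenberg-type cases $\mathfrak{n}\cong\mathfrak{h}_3\oplus\R^2$ and $\mathfrak{n}\cong\mathfrak{h}_5$), which between them account for all Lie algebras in the statement whose nilradical has commutator of dimension at most one. By the classifications in \cite{SW}, the remaining case $\dim[\mathfrak{n},\mathfrak{n}]\geq 2$ is exhausted by the eight Lie algebras in Table~\ref{table-new}, so it suffices to treat these eight individually.

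For the existence direction, I would exhibit an explicit integrable $J$ on each of the four Lie algebras $\frs{6.145}^0$, $\frs{6.147}^0$, $\frs{6.152}$, $\frs{6.154}^0$ in the statement. The case of $\frs{6.152}$ is already settled by Remark~\ref{rem_152}. On the other three, the structure equations show that $\operatorname{ad}_{f_6}$ preserves two two-dimensional subspaces of $\mathfrak{n}$ on which it acts as a (shifted) rotation, which naturally suggests ans\"atze of the form $Jf_1=f_2$, $Jf_3=f_4$, with $Jf_5$ chosen in $\R f_5\oplus\R f_6$ so that $J^2=-\operatorname{Id}$. For each of these three algebras, the Nijenhuis tensor reduces to a small number of components on basis pairs, and one checks by a direct computation that suitable choices of the coefficients make all of them vanish.

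The main work is the non-existence claim for the four remaining algebras $\frs{6.140}^{-1}$, $\frs{6.146}^{-1}$, $\frs{6.151}$, $\frs{6.155}^1$. For each, the plan is to parametrize a putative complex structure as follows: since $\mathfrak{n}$ has odd dimension and $J^2=-\operatorname{Id}$, the subspace $V\coloneqq\mathfrak{n}\cap J\mathfrak{n}$ is four-dimensional and $J$-invariant. Choosing $v\in\mathfrak{n}\setminus V$ and replacing the sixth basis vector by $Jv$ expresses $\mathfrak{g}=V\oplus\R v\oplus\R Jv$ and reduces $J$ to finitely many real parameters (the matrix of $J\rvert_V$ together with the components of $v$ in the original basis). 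One then exploits the characteristic subspaces of $\mathfrak{g}$ that are forced to be $J$-invariant --- such as (portions of) $[\mathfrak{n},\mathfrak{n}]$, the center of $\mathfrak{n}$, or real and complex eigenspaces of $\operatorname{ad}_{f_6}$ --- to put $J\rvert_V$ into a partial normal form, and finally imposes $N^J(f_i,f_j)=0$ on all basis pairs to obtain a polynomial system in the remaining unknowns. The key claim is that this system is inconsistent for each of the four assignments of structure constants.

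The main obstacle will be handling these four non-existence arguments uniformly enough that the calculations stay tractable, while still covering every possible choice of $v$ and every parameter configuration of $J\rvert_V$. In analogy with the techniques of \cite{FP_gk, FP_alm}, the contradiction is expected to emerge already at the level of linear algebra once the integrability equations coming from the ``mixed'' brackets $[v,\cdot\,]$ with $v\in V$ have been imposed, so that the deeper obstructions involving $v$ and $Jv$ themselves need not be reached.
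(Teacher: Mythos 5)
Your skeleton is the paper's: the cases $\dim\mathfrak{n}^1\le 1$ are delegated to \cite{FP_gk,FP_alm}, the remaining case is reduced via \cite{SW} to the eight algebras of Table \ref{table-new}, complex structures are exhibited on $\frs{6.145}^0$, $\frs{6.147}^0$, $\frs{6.152}$, $\frs{6.154}^0$ and excluded on $\frs{6.140}^{-1}$, $\frs{6.146}^{-1}$, $\frs{6.151}$, $\frs{6.155}^1$ by imposing $J^2=-\operatorname{Id}$ and $N^J=0$. But both halves of your treatment of the eight exceptional algebras have concrete problems. For existence, the block ansatz $Jf_1=f_2$, $Jf_3=f_4$, $Jf_5\in\R f_5\oplus\R f_6$ produces no integrable structure on $\frs{6.147}^0$: writing $Jf_5=af_5+bf_6$, for every choice of signs in $Jf_1=\pm f_2$, $Jf_3=\pm f_4$ the $f_1$- and $f_3$-components of $N^J(f_3,f_5)$ cannot both vanish unless $b=0$, which is incompatible with $J^2=-\operatorname{Id}$ on $\langle f_5,f_6\rangle$; this matches Proposition \ref{cpx-145_147}, which shows every complex structure on $\frs{6.147}^0$ carries off-block terms (cf. the example $Jf_3=-\tfrac12 f_2+f_4$ in Table \ref{table-exnew}). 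On $\frs{6.154}^0$ the ansatz cannot work at all: integrability forces $f^6(Jf_3)\neq0$ (the step ``$J_{63}\neq0$'' at the start of the proof of Proposition \ref{cpx-152_154}), so $Jf_3=f_4$ is impossible; there the rotation planes are $\langle f_1,f_2\rangle$ and $\langle f_4,f_5\rangle$, and a working structure is, e.g., $Jf_1=f_2$, $Jf_4=-f_5$, $Jf_3=f_6$. So the existence verification you describe would not go through as stated; you must enlarge the family of candidate $J$'s or simply exhibit explicit structures, as the paper does.

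For non-existence, what you give is a plan whose decisive step is asserted rather than proved: the inconsistency of the system coming from $J^2=-\operatorname{Id}$ and $N^J=0$ on the four remaining algebras is exactly the computational core of the paper's proof, carried out by a lengthy branching on the vanishing of entries $J_{61},J_{62},J_{63},J_{51},\dots$ of a generic matrix in the given basis. The structural shortcuts you invoke are not available a priori: neither $\mathfrak{n}^1$, nor the center of $\mathfrak{n}$, nor eigenspaces of $\operatorname{ad}_{f_6}$ need be $J$-invariant (for nilradical $\mathfrak{n}_{5.2}$ integrability in fact forces $J\mathfrak{n}^1\not\subset\mathfrak{n}$), so any ``partial normal form'' of $J\rvert_V$ must itself be extracted from the same polynomial system. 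Moreover, your expectation that the contradiction emerges before the relations involving $v$ and $Jv$ are used is contradicted by how the contradictions actually close in the paper: they rely on components such as $f^6(N^J(f_1,f_6))$, $f^6(N^J(f_2,f_6))$, on $(J^2)_{11}=(J_{11})^2\geq0$, on the negativity of $(J^2)_{55}$ or $(J^2)_{66}$, and, for $\frs{6.140}^{-1}$ and $\frs{6.146}^{-1}$, on the final identity $1=f^1(N^J(f_1,f_6))-(J^2)_{11}=-2(J_{11})^2-1$ --- all of which involve the transverse direction and the full condition $J^2=-\operatorname{Id}$. Until these eliminations are actually carried out for each of $\frs{6.140}^{-1}$, $\frs{6.146}^{-1}$, $\frs{6.151}$, $\frs{6.155}^1$, the ``only if'' direction of the theorem is not established.
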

\begin{proof}
The result concerning almost abelian Lie algebras and Lie algebras with Heisenberg-type nilradical follows from \cite{FP_gk} and \cite{FP_alm}, after properly adapting the names of the Lie algebras involved, based on the conventions in \cite{SW}. The remaining eight isomorphism classes of Lie algebras (see Table \ref{table-new}) can be studied via a case-by-case argument.

Having provided explicit examples of complex structures on $\frs{6.145}^0$, $\frs{6.147}^0$, $\frs{6.152}$ and $\frs{6.154}^0$, it suffices to prove that the remaining four Lie algebras in Table \ref{table-new} do not admit complex structures, which can be achieved via explicit computations, 
considering the generic almost complex structure $J=(J_{jk})_{j,k=1,\ldots,6}$ written as a matrix with respect to the basis $\{f_1,\ldots,f_6\}$ defining the structure equations. We then impose the equations dictated by the condition $J^2=-\text{Id}$ and $N^J=0$, showing that they lead to some contradiction.

We can consider the Lie algebra
\[
\mathfrak{g}=(f^{35}+f^{16}+\varepsilon f^{36},f^{45}-f^{26}-\varepsilon f^{46},f^{36},-f^{46},0,0), \quad \varepsilon \in \{0,1\},
\]
with the cases $\varepsilon=0$ and $\varepsilon=1$ yielding $\mathfrak{s}_{6.140}^{-1}$ and $\mathfrak{s}_{6.146}^{-1}$, respectively.
One computes
\[
f^6(N^J(f_2,f_4))=J_{62}\left( J_{52} - \varepsilon J_{62} \right).
\]
Let us first assume $J_{62} \neq 0$, obtaining $J_{52}=\varepsilon J_{62}$. Now,
\[
f^4 (N^J(f_1,f_2))=-2J_{41}J_{62}, \quad f^6(N^J(f_1,f_2))=-2J_{61}J_{62},
\]
implying $J_{41}=J_{61}=0$. From
\[
f^3(N^J(f_1,f_5))=-(J_{31})^2, \quad f^5(N^J(f_1,f_2))=-J_{51}J_{62},
\]
we deduce $J_{31}=J_{51}=0$. Finally,
$
f^6(N^J(f_1,f_6))=J_{21}J_{62}
$
forces $J_{21}=0$, but now $(J^2)_{11}=(J_{11})^2 \geq 0$, a contradiction.

We should then have $J_{62}=0$. One has
$
f^3(N^J(f_1,f_2))=-2J_{32}J_{61}.
$
We claim $J_{61}=0$. Otherwise we would have
$
f^4(N^J(f_2,f_5))=-(J_{42})^2,
$
yielding $J_{42}=0$. Then,
\[
f^6(N^J(f_2,f_6))=-J_{12}J_{61}, \quad f^6(N^J(f_2,f_3))=J_{52}J_{61}
\]
imply $J_{12}=0=J_{52}=0$, at which point $(J^2)_{22}=(J_{22})^2 \geq 0$, a contradiction similarly to the previous case.

In a similar fashion, we now claim $J_{51}=0$. Suppose that is not the case: then
$
f^3(N^J(f_1,f_3))=J_{31}J_{51}
$
implies $J_{31}=0$, and now
\begin{gather*}
f^1(N^J(f_1,f_2))=-J_{51}J_{32}, \quad f^1(N^J(f_1,f_3))=J_{51}(J_{11}-J_{33}),\\ f^5(N^J(f_1,f_3))=J_{51}(J_{51}-J_{63})
\end{gather*}
imply $J_{32}=0$, $J_{33}=J_{11}$ and $J_{63}=J_{51}$. We have
\[
f^4(N^J(f_2,f_5))=-(J_{42})^2, \quad f^4(N^J(f_1,f_3))=-J_{41}J_{51}, 
\]
which force $J_{41}=J_{42}=0$. Now,
\[
N^J(f_1,f_5)=-J_{35}J_{51}\,f_1 - (2J_{21}J_{65}+J_{45}J_{51})\,f_2 - J_{65}J_{51} \, f_5
\]
vanishes if and only if $J_{35}=J_{45}=J_{65}=0$. We now have
$
f^1(N^J(f_2,f_3))=2J_{51}J_{12},
$
forcing $J_{12}=0$, at which point
$
f^5(N^J(f_2,f_3))=2J_{52}J_{51}
$
implies $J_{52}=0$, leading to the same contradiction as before, $(J^2)_{11}=(J_{11})^2 \geq 0$.

Now, having established $J_{51}=0$, assume $J_{52} \neq 0$: then,
\[
f^1(N^J(f_1,f_2))=J_{31}J_{52}, \quad f^2(N^J(f_1,f_2))=J_{41}J_{52}
\]
imply $J_{31}=0=J_{41}=0$, at which point
$
f^5(N^J(f_1,f_6))=J_{52}J_{21}
$
forces $J_{21}=0$, leading to $(J^2)_{11}=(J_{11})^2 \geq 0$, which cannot occur. Then, we must have $J_{52}=0$.

Continuing to work in the same way, we assume $J_{41}=0$: through
\[
f^4(N^J(f_1,f_3))=-2J_{41}J_{63},
\]
this implies $J_{63}=0$, and then
$
f^2(N^J(f_1,f_3))=J_{41}J_{53}
$
forces $J_{53}=0$. Now,
\begin{alignat*}{2}
&f^5(N^J(f_1,f_6))=J_{54}J_{41}, \quad& &f^4(N^J(f_1,f_4))=-2J_{64}J_{41},\\ &f^3(N^J(f_1,f_5))=-(J_{31})^2-J_{32}J_{41}, \quad& &f^4(N^J(f_1,f_5))=-J_{41}(J_{31}+J_{42}+2J_{65})
\end{alignat*}
imply
\[
J_{54}=J_{64}=0, \quad J_{32}=-\frac{(J_{31})^2}{J_{41}}, \quad J_{42}=-J_{31}-2J_{65}.
\]
In order for
\[
(J^2)_{55}=(J_{55})^2+J_{56}J_{65}
\]
to be negative, we have to assume $J_{65} \neq 0$, so that
\[
f^3(N^J(f_2,f_5))=-4 \frac{(J_{31})^2J_{65}}{J_{41}}
\]
implies $J_{31}=0$. A contradiction is now given by
\[
f^4(N^J(f_2,f_5))=-4(J_{65})^2 \neq 0.
\]
We conclude that we have $J_{41}=0$, necessarily.

Now, we have
\[
f^3(N^J(f_1,f_5))=-(J_{31})^2, \quad f^4(N^J(f_2,f_5))=-(J_{42})^2,
\]
forcing $J_{31}=J_{42}=0$.
A contradiction is now provided by
\[
1=f^1(N^J(f_1,f_6))-(J^2)_{11}=-2(J_{11})^2-1,
\]
which never holds for any value of $J_{11}$. This concludes the proof of the non-existence of complex structures on $\mathfrak{s}_{6.140}^{-1}$ and $\mathfrak{s}_{6.146}^{-1}$.

We now consider the Lie algebra
\[
\mathfrak{g}=(f^{35}+f^{16},f^{34}-f^{26}-\varepsilon f^{46},f^{45},-f^{46},f^{56},0), \quad \varepsilon \in \{0,1\},
\]
encompassing the Lie algebras $\mathfrak{s}_{6.151}$ and $\mathfrak{s}_{6.155}^0$ for $\varepsilon=1$ and $\varepsilon=0$, respectively, and perform the same process as before, writing the generic $J=(J_{jk})_{j,k=1,\ldots,6}$ in terms of $\{f_1,\ldots,f_6\}$ and imposing $J^2=-\text{Id}$ and $N^J=0$.

We start by assuming $J_{61} \neq 0$. Then,
\[
\nj{1}{2}{5}=-2J_{52}J_{61}, \quad \nj{1}{2}{6}=-2J_{62}J_{61}
\]
force $J_{52}=J_{62}=0$, so that
\[
\nj{2}{3}{4}=(J_{42})^2
\]
implies $J_{42}=0$. Now, by
\[
\nj{1}{2}{3}=-J_{32}J_{61}, \quad \nj{2}{6}{6}=-J_{12}J_{61},
\]
we deduce $J_{12}=J_{32}=0$, but this leads to $(J^2)_{22}=(J_{22})^2 \geq 0$, a contradiction. We then must have $J_{61}=0$.

Assume $J_{62} \neq 0$. Then,
\[
\nj{1}{2}{4}=-2J_{41}J_{62}
\]
implies $J_{41}=0$, which leads to
\[
\nj{1}{3}{5}=(J_{51})^2
\]
forcing $J_{51}=0$. Since we have
\[
\nj{1}{2}{3}=-J_{31}J_{62}, \quad \nj{1}{6}{6}=J_{21}J_{62},
\]
we deduce $J_{21}=J_{31}=0$, again yielding a contradiction, since now $(J^2)_{11}=(J_{11})^2 \geq 0$. This means that we have to impose $J_{62}=0$.

Suppose we have $J_{63} \neq 0$. Then,
\[
\nj{1}{4}{6}=J_{51}J_{63}, \quad \nj{1}{5}{6}=-J_{41}J_{63}
\]
imply $J_{41}=J_{51}$, and now
$
\nj{1}{3}{3}=-J_{31}J_{63}
$
yields $J_{31}=0$. A contradiction follows from
\[
\nj{1}{3}{2}=-2J_{21}J_{63},
\]
implying $J_{21}=0$ and consequently $(J^2)_{11}=(J_{11})^2 \geq 0$. We deduce that $J_{63}=0$ holds.

We now assume $J_{64} \neq 0$, so that
\[
\nj{4}{5}{6}=2J_{64}J_{65}
\]
implies $J_{65}=0$. Now, from
\[
\nj{i}{6}{6}=J_{4i}J_{64}, \quad i=1,2,3,5,
\]
we deduce $J_{41}=J_{42}=J_{43}=J_{45}=0$.
Having
\[
\nj{1}{3}{5}=(J_{51})^2, \quad \nj{1}{5}{3}=-(J_{31})^2,
\]
we must have $J_{31}=J_{51}=0$ and now
$
\nj{1}{4}{2}=-2J_{21}J_{64}
$
forces $J_{21}=0$, yielding $(J^2)_{11}=(J_{11})^2 \geq 0$. Then, $J_{64}=0$ necessarily holds.

Now, in order for
\[
(J^2)_{66}=(J_{66})^2+J_{56}J_{65}
\]
to be negative, we must have $J_{65} \neq 0$, so that
\[
(J^2)_{6i}=J_{5i}J_{65}, \quad i=1,2,3,4,
\]
implies $J_{51}=J_{52}=J_{53}=J_{54}=0$. Now,
\[
\nj{2}{3}{4}=-(J_{42})^2,\quad \nj{2}{4}{3}=(J_{32})^2
\]
imply $J_{32}=J_{42}=0$, leading to
\[
\nj{2}{5}{1}=2J_{12}J_{65}
\]
yielding $J_{12}=0$ and the contradiction $(J^2)_{22}=(J_{22})^2 \geq 0$.
\end{proof}

\begin{remark} \label{rem_lattices}
Among the eight Lie algebras of Table \ref{table-new}, we have thus proved that exactly four of them, namely $\frs{6.145}^0$, $\frs{6.147}^0$, $\frs{6.152}$ and $\frs{6.154}^0$, admit complex structures. One can prove that the corresponding simply connected Lie groups admit cocompact lattices, hence they give rise to compact solvmanifolds. For the Lie algebras $\frs{6.145}^0$ and $\frs{6.152}$, this was proved in \cite[Proposition 8.4.9]{Bock} (we note that $\frs{6.145}^0$ is isomorphic to $\mathfrak{g}_{6.70}^{0,0}$ in \cite{Bock}) and \cite[Proposition 2.10]{FOU} (see also Remark \ref{rem_152}), respectively. To prove analogous results for the remaining two Lie algebras, we can exploit the same technique used in \cite{Bock} (see also \cite[Lemma 2.9]{FOU}), by which a lattice on the Lie group corresponding to an $n$-dimensional almost nilpotent Lie algebra $\mathfrak{n} \rtimes \text{span}\left< X \right>$ can be constructed if there exist a nonzero $t$ and a rational basis $\{Y_1,\ldots,Y_{n-1}\}$ of $\mathfrak{n}$ such that the matrix associated with $\text{exp}\left( t\,\text{ad}_X \rvert_{\mathfrak{n}} \right)$ has integer entries. Now, the Lie algebra $\frs{6.147}^0$ can be regarded as the semidirect product $\mathfrak{n}_{5.1} \rtimes \text{span}\left< X \right>$, with $X=f_6-\frac{\pi-1}{\pi} f_5$. If we fix the (rational) basis $\{f_1,\ldots,f_5\}$ for $\mathfrak{n}_{5.1}$, we have
\[
\text{exp}\left(2\pi \,\text{ad}_{X}\rvert_{\mathfrak{n}_{5.1}} \right) = \begin{pmatrix}
1 & 0 & 2 & 0 & 0 \\
0 & 1 & 0 & 2 & 0 \\
0 & 0 & 1 & 0 & 0 \\
0 & 0 & 0 & 1 & 0 \\
0 & 0 & 0 & 0 & 1
\end{pmatrix},
\] 
thus proving the claim. The result regarding $\frs{6.154}^0=\mathfrak{n}_{5.2} \rtimes \text{span}\left<f_6\right>$ easily follows from the fact that
\[
\text{exp}\left( 2 \pi \, \text{ad}_{f_6}\rvert_{\mathfrak{n}_{5.2}} \right) = \text{Id}_{\mathfrak{n}_{5.2}}.
\]
It is not known whether the Lie group corresponding with $\frs{6.152}$ admits cocompact lattices.
\end{remark}

\section{Moduli of complex structures} \label{sec_mod}

In order to obtain some of the results of the next sections, we shall examine some of the Lie algebras of the previous theorem, studying all complex structures on them up to equivalence: we say that two complex structures $J_1$ and $J_2$ on a Lie algebra $\mathfrak{g}$ are equivalent if there exists a Lie algebra automorphism $\varphi \in \text{GL}(\mathfrak{g})$ such that
\[
J_2=\varphi J_1 \varphi^{-1}.
\]
Moreover, two complex structures are equivalent if and only if they admit two respective bases of $(1,0)$-forms having the same structure equations. This is the way in which we are going to tackle this problem. In the next sections, when we study Hermitian structures $(J,g)$ on such Lie algebras, it will be enough to assume such structure equations for some basis of $(1,0)$-forms $\{\alpha^1,\alpha^2,\alpha^3\}$ and then consider the generic Hermitian metric, which, with respect to the previous frame, has fundamental form
\begin{equation} \label{om}
\omega= i (\lambda_1 \alpha^{1 \overline{1}} + \lambda_2 \alpha^{2 \overline{2}} + \lambda_3 \alpha^{3 \overline{3}}) + w_1 \alpha^{2 \overline{3}} - \overline{w}_1 \alpha^{3 \overline{2}} + w_2 \alpha^{1 \overline{3}} - \overline{w}_2 \alpha^{3 \overline{1}} + w_3 \alpha^{1 \overline{2}} - \overline{w}_3 \alpha^{2 \overline{1}},
\end{equation}
with $\lambda_1,\lambda_2,\lambda_3 \in \R_{>0}$ and $w_1,w_2,w_3 \in \C$ satisfying the positivity conditions
\[
\begin{gathered}
\lambda_1 \lambda_2>|w_3|^2, \quad \lambda_2 \lambda_3>|w_1|^2, \quad \lambda_1 \lambda_3>|w_2|^2, \\
 \lambda_1 \lambda_2 \lambda_3+2\Re(i \overline{w}_1 w_2 \overline{w}_3)>\lambda_1|w_1|^2+\lambda_2|w_2|^2+\lambda_3|w_3|^2.
\end{gathered}
\]

We start by analyzing the four Lie algebras of Theorem \ref{cpx-class} having nilradical $\mathfrak{n}$ satisfying $\dim \mathfrak{n}^1 >1$.
Similarly to the previous proof, for each Lie algebra $\mathfrak{g}$, we shall consider the generic endomorphism $J \in \mathfrak{gl}(\mathfrak{g})$, represented by some matrix $(J_{jk})_{j,k=1,\ldots,6}$ with respect to the basis $\{f_1,\ldots,f_6\}$ used in the structure equations. We shall then look at the conditions set by the requirements $N^J=0$ and $J^2=-\text{Id}_{\mathfrak{g}}$ and solve them to obtain the expression for the generic complex structure $J$, which we shall use to extract a $J$-adapted basis of the form $\{e_1=f_{j},e_2=Jf_{j},e_3=f_{k},e_4=Jf_{k},e_5=f_{l},e_6=Jf_{l}\}$ and then to write a suitable basis $\{\alpha^1,\alpha^2,\alpha^3\}$ of $(1,0)$-forms as
\[
\alpha^1=\zeta_1 (e^1+ie^2), \quad \alpha^2=\zeta_2 (e^3+ie^4), \quad \alpha^3=\zeta_3 (e^5+ie^6),
\]
where $\zeta_1,\zeta_2,\zeta_3 \in \C - \{0\}$ will be chosen so as to simplify the associated structure equations as much as possible.

\begin{proposition} \label{cpx-145_147}
For every complex structure $J$ on the Lie algebra $\mathfrak{s}_{6.145}^0$, there exists a basis of $(1,0)$-forms $\{\alpha^1,\alpha^2,\alpha^3\}$ whose structure equations are
\begin{equation} \label{145_J1}
\begin{cases}
d\alpha^1=\alpha^1 \wedge (\alpha^3 - \alpha^{\overline{3}})+e^{i\theta}\alpha^{23} + e^{-i\theta} \alpha^{2 \overline{3}} + \nu \alpha^{3 \overline{3}}, \\
d\alpha^2=\alpha^2 \wedge (\alpha^3 - \alpha^{\overline{3}}),\\
d\alpha^3=0,
\end{cases}
\end{equation}
for some $\theta \in (-\frac{\pi}{2},\frac{\pi}{2})$ and $\nu \in \{0,1\}$.

For every complex structure $J$ on the Lie algebra $\mathfrak{s}_{6.147}^0$, there exists a basis of $(1,0)$-forms $\{\alpha^1,\alpha^2,\alpha^3\}$ satisfying structure equations
\begin{equation} \label{147_J1}
\begin{cases}
d\alpha^1=\alpha^1 \wedge (\alpha^3 - \alpha^{\overline{3}})+(1+z)\alpha^{23} + (1-z) \alpha^{2 \overline{3}} + \nu \alpha^{3 \overline{3}}, \\
d\alpha^2=\alpha^2 \wedge (\alpha^3 - \alpha^{\overline{3}}),\\
d\alpha^3=0,
\end{cases}
\end{equation}
with $z \in \mathbb{C}$, $\Re(z) \neq 0$, and $\nu \in \{0,1\}$, or
\begin{equation} \label{147_J2}
\begin{cases}
d\alpha^1=\alpha^1 \wedge (\alpha^3 - \alpha^{\overline{3}})+z \, \alpha^2 \wedge (\alpha^3 - \alpha^{\overline{3}}) - \alpha^{3 \overline{2}} + \alpha^{3 \overline{3}}, \\
d\alpha^2=- \alpha^2 \wedge (\alpha^3 - \alpha^{\overline{3}}),\\
d\alpha^3=0,
\end{cases}
\end{equation}
for some $z \in \C$, or
\begin{equation} \label{147_J3}
\begin{cases}
d\alpha^1=\alpha^1 \wedge (\alpha^3 - \alpha^{\overline{3}})+x \alpha^2 \wedge (\alpha^3 - \alpha^{\overline{3}}) - \alpha^{3 \overline{2}} \\
d\alpha^2=- \alpha^2 \wedge (\alpha^3 - \alpha^{\overline{3}}),\\
d\alpha^3=0,
\end{cases}
\end{equation}
with $x \in \R_{\geq 0}$.
\end{proposition}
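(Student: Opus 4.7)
The plan is to carry out, for each of the two Lie algebras, the three-step procedure announced at the beginning of Section \ref{sec_mod}. First I would write the generic endomorphism $J=(J_{jk})_{j,k=1,\ldots,6}$ in the basis $\{f_1,\ldots,f_6\}$ fixed by the structure equations of Table \ref{table-new}, impose $J^2=-\text{Id}$ and $N^J=0$, and solve the resulting polynomial system. As in the proof of Theorem \ref{cpx-class}, many entries $J_{jk}$ are forced to vanish by inspecting individual components $f^k(N^J(f_i,f_j))$ and branching on which of the entries $J_{j6}$ ($j=1,\ldots,5$) are nonzero, while the diagonal constraints $(J^2)_{ii}=-1$ rule out most branches. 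At the end of this step, the generic $J$ on each Lie algebra is parametrized by only a few free real entries.

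Second, for each surviving parametric family I would select a $J$-adapted real basis $\{e_1,Je_1,e_3,Je_3,e_5,Je_5\}$ with $e_5=f_6$ (so that $e^5,e^6$ dually span the complement of the nilradical), set $\alpha^k=\zeta_k(e^{2k-1}+ie^{2k})$ for $k=1,2,3$, and compute $d\alpha^k$ directly from the structure equations of $\mathfrak{g}$. Choosing appropriate $\zeta_k\in\C^{\ast}$ normalizes as many coefficients as possible to prescribed values, producing the candidate normal forms (\ref{145_J1})--(\ref{147_J3}). Finally I would reduce any leftover parameters modulo Lie algebra automorphisms, using the criterion recalled at the start of Section \ref{sec_mod} that equivalence amounts to admitting a common presentation of the structure equations on some $(1,0)$-basis. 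For $\mathfrak{s}_{6.145}^0$ this collapses the remaining freedom to the phase $\theta\in(-\pi/2,\pi/2)$ together with the discrete parameter $\nu\in\{0,1\}$, the latter recording whether the $\alpha^{3\overline{3}}$ coefficient of $d\alpha^1$ can be rescaled to zero or not. For $\mathfrak{s}_{6.147}^0$ the case split into (\ref{147_J1}), (\ref{147_J2}), (\ref{147_J3}) is dictated by the sign of the coefficient of $\alpha^2\wedge(\alpha^3-\alpha^{\overline{3}})$ appearing in $d\alpha^2$ (which distinguishes (\ref{147_J1}) from the other two) and, in the minus-sign case, by whether the $\alpha^{3\overline{3}}$ term in $d\alpha^1$ can be killed by a further rescaling of $\alpha^1$ (which separates (\ref{147_J3}) from (\ref{147_J2})).

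The main obstacle is combinatorial rather than conceptual: organizing the large Nijenhuis system into disjoint branches, matching each branch to exactly one of the listed normal forms, and verifying that the resulting families are pairwise inequivalent. The inequivalence across the three subcases for $\mathfrak{s}_{6.147}^0$ should follow from the fact that the listed structure equations differ by invariants—chiefly the sign mentioned above and the presence or absence of the $\alpha^{3\overline{2}}$ and $\alpha^{3\overline{3}}$ terms in $d\alpha^1$—which no change of $(1,0)$-basis preserving the structure-equations shape can alter.
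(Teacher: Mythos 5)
Your plan is essentially the paper's proof: write the generic $J=(J_{jk})$ in the basis $\{f_1,\ldots,f_6\}$, impose $J^2=-\mathrm{Id}$ and $N^J=0$, solve by branching (the paper treats both algebras simultaneously via a parameter $\varepsilon\in\{0,1\}$, the split for $\mathfrak{s}_{6.147}^0$ coming from the discrete sign choice $J_{34}=\pm J_{12}$ and, in the minus branch, from whether $J_{35}=J_{45}=0$ — i.e.\ whether the $\alpha^{3\overline{3}}$-coefficient vanishes identically, not whether it can be rescaled away), then pass to the $J$-adapted basis $\{f_1,Jf_1,f_3,Jf_3,f_5,Jf_5\}$ and normalize with the $\zeta_k$, exactly as you describe. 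Your final step on pairwise inequivalence of the normal forms is not required for this statement, which only asserts the existence of a basis realizing one of the listed sets of structure equations.
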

\begin{proof}
In order for the two parts of the statement to share part of the proof, it is easy to start by working on the Lie algebra
\begin{equation} \label{145_147}
\mathfrak{g}=(f^{35}+f^{26},f^{45}-f^{16}+\varepsilon f^{36},f^{46},-f^{36},0,0), \quad \varepsilon \in \{0,1\},
\end{equation}
yielding $\frs{6.145}^0$ for $\varepsilon=0$ and $\frs{6.147}^0$ for $\varepsilon=1$.

First,
\[
f^6(N^J(f_1,f_2))=(J_{61})^2+(J_{62})^2
\]
forces $J_{61}=J_{62}=0$, and now
\[
\nj{3}{4}{6}=(J_{63})^2+(J_{64})^2
\]
yields $J_{63}=J_{64}=0$, at which point
\[
\nj{1}{3}{5}=(J_{51})^2, \quad \nj{2}{4}{5}=(J_{52})^2
\]
imply $J_{51}=J_{52}=0$. Now, we necessarily have $J_{65} \neq 0$, as
\begin{equation} \label{Jsq66}
-1=(J^2)_{66}=J_{66}^2+J_{56}J_{65},
\end{equation}
so that
\[
\nj{3}{5}{5}=J_{65}J_{54}, \quad \nj{4}{5}{5}=-J_{65}J_{53}, \quad (J^2)_{65}=J_{65}(J_{55}+J_{66})
\]
yield $J_{53}=J_{54}=0$, $J_{66}=-J_{55}$. Moreover, by \eqref{Jsq66}, we deduce
\[
J_{56}=-\frac{(J_{66})^2+1}{J_{65}}=-\frac{(J_{55})^2+1}{J_{65}}.
\]
Moving on,
\[
\nj{1}{5}{1}=-J_{11}J_{31}-J_{12}J_{41}+J_{65}J_{12}+J_{65}J_{21}+J_{31}J_{55}
\]
implies
\[
J_{21}=\frac{1}{J_{65}}\left(J_{11}J_{31}+J_{12}J_{41}-J_{12}J_{65}-J_{31}J_{55}\right).
\]
One computes
\begin{align*}
\nj{4}{5}{1}&=-J_{11}J_{34}-J_{12}J_{44}-J_{12}J_{55}-J_{13}J_{65}+J_{24}J_{65}+J_{34}J_{55}, \\
\nj{2}{5}{1}&=-J_{11}J_{32}-J_{11}J_{65}-J_{12}J_{42}+J_{22}J_{65}+J_{32}J_{55},
\end{align*}
yielding
\begin{align*}
J_{13}&=-\frac{1}{J_{65}}(J_{11}J_{34}+J_{12}J_{44}+J_{12}J_{55}-J_{24}J_{65}-J_{34}J_{55}),\\
J_{22}&=\frac{1}{J_{65}}(J_{11}J_{32}+J_{11}J_{65}+J_{12}J_{42}-J_{32}J_{55}),
\end{align*}
at which point
\[
\nj{3}{5}{4}=-\varepsilon J_{42}J_{65}-J_{33}J_{41}-J_{33}J_{65}-J_{41}J_{55}-J_{42}J_{43}+J_{44}J_{65}
\]
forces
\[
J_{44}=\frac{1}{J_{65}}\left(\varepsilon J_{42}J_{65}+J_{33}J_{41}+J_{33}J_{65}+J_{41}J_{55}+J_{42}J_{43}\right),
\]
and now
\begin{align*}
\nj{4}{5}{4}=&-\frac{1}{J_{65}}  (\varepsilon (J_{42})^2J_{65}+J_{33}J_{41}J_{42}+J_{33}J_{42}J_{65}+J_{34}J_{41}J_{65}+J_{34}J_{65}^2+J_{41}J_{42}J_{55}\\&+(J_{42})^2J_{43}+J_{42}J_{55}J_{65}+J_{43}J_{65}^2 )
\end{align*}
implies
\begin{align*}
J_{43}=-\frac{1}{(J_{42})^2+(J_{65})^2}&\left(\varepsilon (J_{42})^2J_{65}+J_{33}J_{41}J_{42}+J_{33}J_{42}J_{65}+J_{34}J_{41}J_{65} \right.\\
&\left.+J_{34}(J_{65})^2+J_{41}J_{42}J_{55}+J_{42}J_{55}J_{65}\right).
\end{align*}
Now, notice we must have $J_{41} \neq J_{65}$, otherwise we would find
\[
\nj{1}{5}{3}=-(J_{31})^2+(J_{65})^2, \quad \nj{1}{5}{4}=-2J_{31}J_{65},
\]
which cannot both vanish, having established $J_{65} \neq 0$.
Now, we can explicitly impose the vanishing of
\begin{align*}
\nj{1}{5}{3}&=-(J_{31})^2-J_{32}J_{41}+J_{32}J_{65}+J_{41}J_{65}, \\
\nj{1}{5}{4}&=-J_{31}J_{41}-J_{31}J_{65}-J_{41}J_{42}+J_{42}J_{65},
\end{align*}
by setting
\[
J_{32}=\frac{J_{31}^2-J_{41}J_{65}}{J_{65}-J_{41}}, \quad J_{42}=\frac{J_{31}(J_{41}+J_{65})}{J_{65}-J_{41}}.
\]

Now, we claim $J_{41}=0$: assuming this is not the case, consider
\[
\nj{1}{5}{2}=\varepsilon J_{31}J_{65}-2J_{11}J_{41}+2J_{12}J_{31}+2J_{41}J_{55},
\]
yielding
\[
J_{55}=J_{11}-\frac{1}{2 J_{41}} \left(\varepsilon J_{31}J_{65}+2J_{12}J_{31}\right).
\]
Now, in order for
\[
\nj{2}{5}{4}=-\frac{2}{(J_{41}-J_{65})^2} J_{65}\left((J_{31})^2J_{41}+(J_{31})^2J_{65}+(J_{41})^3-(J_{41})^2J_{65} \right)
\]
to vanish, we must assume $J_{31} \neq \pm J_{41}$ and impose
\[
J_{65}=-\frac{J_{41}((J_{31})^2+(J_{41})^2)}{(J_{31})^2-(J_{41})^2},
\]
at which point
\[
\nj{2}{5}{3}=\frac{\left((J_{31})^2+(J_{41})^2 \right)^3}{2(J_{31})^3J_{41}}
\]
cannot vanish.

Now, having $J_{41}=0$, we can compute
\[
\nj{2}{5}{4}=-2J_{31}^2,
\]
forcing $J_{31}=0$. Now, we have
\[
(J^2)_{11}=(J_{11})^2-(J_{12}^2), \quad (J^2)_{12}=2J_{11}J_{12}, \quad (J^2)_{33}=(J_{33})^2-(J_{34})^2, \quad (J^2)_{34}=2J_{33}J_{34},
\]
from which we deduce $J_{11}=J_{33}=0$ and $(J_{12})^2=(J_{34})^2=1$.
Then,
\begin{align*}
\nj{3}{5}{1}&=-\varepsilon J_{12}J_{65} + J_{12}J_{34} + J_{14}J_{65}+J_{23}J_{65}-1, \\
\nj{5}{6}{3}&=J_{34}J_{35}-J_{45}J_{55}-J_{46}J_{65}, \\
\nj{5}{6}{4}&=J_{34}J_{45}+J_{35}J_{55}+J_{36}J_{65}
\end{align*}
force
\[
J_{23}=\varepsilon J_{12} -J_{14} + \frac{1-J_{12}J_{34}}{J_{65}}, \quad J_{46} = \frac{ J_{34}J_{35}-J_{45}J_{55}}{J_{65}}, \quad J_{36}=- \frac{J_{34}J_{45}+J_{35}J_{55}}{J_{65}}.
\]

At this point, let us set $\varepsilon=0$ and continue working on the Lie algebra $\frs{6.145}^0$. First, recalling $(J_{12})^2=(J_{34})^2=1$,
\[
\nj{4}{5}{2}=2(J_{12}J_{34}-1)
\]
forces $J_{12}=J_{34}=\delta \in \{-1,1\}$. We now have
\[
\nj{4}{6}{2}=2 \delta J_{24},
\]
yielding $J_{24}=0$, and
\[
\nj{5}{6}{1}=\delta J_{15} + J_{14}J_{35} - J_{25}J_{55} - J_{26}J_{65},
\]
from which we obtain
\[
J_{26}=\frac{1}{J_{65}} \left( \delta J_{15} + J_{14}J_{35} - J_{25}J_{55} \right).
\]
Lastly,
\[
\nj{4}{6}{1}=2\delta J_{14}
\]
forces $J_{14}=0$, so that, in order for
\[
(J^2)_{26}=-\delta J_{15}J_{55} - \delta J_{16}J_{65} - J_{25}
\]
to vanish, we can set
\[
J_{25}=- \delta (J_{15}J_{55} + J_{16}J_{65}).
\]
$J$ now defines a complex structure, since an explicit computation shows that we have $J^2=-\text{Id}$ and $N^J=0$.
One can then see that $\{e_1=f_1,e_2=Jf_1,e_3=f_3,e_4=Jf_3,e_5=f_5,e_6=Jf_5\}$ defines a new basis for $\frs{6.145}^0$. If $(J_{35})^2+(J_{45})^2 \neq 0$, we can consider the basis $\{\alpha^1,\alpha^2,\alpha^3\}$ of $(1,0)$-forms provided by
\[
\alpha^1=- \frac{(J_{65})^2}{2(\delta J_{45}+i J_{35})}(e^1 + ie^2), \quad \alpha^2=  \frac{\delta J_{65} (1+(J_{55})^2)^{\frac{1}{2}}}{2\left( \delta J_{45}+i J_{35} \right)}(e^3+ie^4), \quad \alpha^3=-\frac{\delta J_{65}}{2} (e^5+ie^6),
\]
satisfying the structure equations \eqref{145_J1}, with $\nu=1$ and
\[
e^{i\theta}= \frac{1-iJ_{55}}{1+(J_{55})^2}.
\]
Instead, if $J_{35}=J_{45}=0$, we can set
\[
\alpha^1=f^1+iJf^1,\quad \alpha^2= - \frac{1}{J_{65}} \delta (1+(J_{55})^2)^{\frac{1}{2}}(f^3+iJf^3), \quad \alpha^3=-\frac{\delta J_{65}}{2} (f^5+iJf^5),
\]
obtaining \eqref{145_J1} with $\nu=0$ and $e^{i\theta}$ as before.

We now turn our attention to the Lie algebra $\frs{6.147}^0$, setting $\epsilon=1$ in \eqref{145_147}. Picking up from where we left earlier, we recall that we have $(J_{12})^2=(J_{34})^2=1$ and we start by assuming $J_{34}=J_{12}=\delta \in \{-1,1\}$. Then,
\[
(J^2)_{14}=2\delta J_{24}, \quad \nj{4}{6}{1}=2 \delta J_{14} -1
\]
force $J_{24}=0$, $J_{14}=\frac{\delta}{2}$. Finally, from
\[
(J^2)_{15}=\delta J_{25}+\frac{\delta}{2} J_{45}+J_{15}J_{55}+J_{16}J_{65}, \quad \nj{5}{6}{1}=\delta J_{15}-\frac{\delta}{2}  J_{35}-J_{25}J_{55}-J_{36}J_{65},
\]
we deduce
\[
J_{16}=-\frac{1}{2 J_{65}} \left(2 \delta J_{25}+\delta J_{45}+2 J_{15}J_{55}\right), \quad J_{26}=\frac{1}{2 J_{65}} \left(2 \delta J_{15}-\delta J_{35} - 2J_{25}J_{55} \right).
\]
We now have $(J^2)=-\text{Id}$ and $N^J=0$. A new basis for $\frs{6.147}^0$ is provided by $\{e_1=f_1,e_2=Jf_1,e_3=f_3,e_4=Jf_3,e_5=f_5,e_6=Jf_5\}$, as one can check directly. In the case $(J_{35})^2+(J_{45})^2 \neq 0$, the basis $\{\alpha^1,\alpha^2,\alpha^3\}$ for $(1,0)$-forms defined by
\[
\alpha^1=-\frac{(J_{65})^2}{2(\delta J_{45} + 2i J_{35})} (e^1+ie^2), \quad
\alpha^2=\frac{J_{65}}{2(\delta J_{45} + 2i J_{35})} (e^3+ie^4), \quad
\alpha^3=\frac{\delta}{2} J_{65} (e^5 + i e^6)
\]
yields \eqref{147_J1}, with $z=-\frac{\delta}{2} (J_{65}-iJ_{55})$ and $\nu=1$, while, when $J_{35}=J_{45}=0$, we can take
\[
\alpha^1=f^1 + iJf^1, \quad \alpha^2= \frac{\delta}{J_{65}} (f^3+iJf^3), \quad \alpha^3= \frac{\delta}{2} J_{65} (f^5 + i Jf^5),
\]
giving \eqref{147_J1}, with $z$ as before and $\nu=0$.

We can now assume $J_{34}=-J_{12}=\delta \in \{-1,1\}$, instead. Having
\[
\nj{3}{5}{2}=4\delta J_{55},
\]
we must have $J_{55}=0$. Now,
\[
\nj{3}{6}{2}=-1-2\frac{\delta}{J_{65}}
\]
forces $J_{65}=-2\delta$. Finally, by
\[
(J^2)_{15}=-2\delta J_{16}+\delta J_{25} + J_{14}J_{45} + J_{24}J_{35}, \quad (J^2)_{16}=\frac{1}{2}(J_{14}J_{35}-J_{24}J_{45}+\delta J_{15})+\delta J_{26},
\]
we have
\[
J_{16}=\frac{1}{2} J_{25} + \frac{\delta}{2} (J_{14}J_{45}+J_{24}J_{35}), \quad J_{26}=\frac{1}{2} J_{15} + \frac{\delta}{2} (J_{14}J_{35}-J_{24}J_{45}).
\]
Again, we now have $(J^2)=-\text{Id}$, $N^J=0$ and $\{e_1=f_1,e_2=Jf_1,e_3=f_3,e_4=Jf_3,e_5=f_5,e_6=Jf_5\}$ is a basis for $\frs{6.147}^0$.

If we have $(J_{35})^2+(J_{45})^2 \neq 0$,
\[
\alpha^1=-\frac{2}{\delta J_{45}+i J_{35}} (e^1+ie^2), \quad \alpha^2=-\frac{2}{\delta J_{45}-i J_{35}} (e^3+ie^4), \quad \alpha^3=-e^5-ie^6
\]
define a basis of $(1,0)$-forms satisfying \eqref{147_J2} with
\[
z=\frac{(\delta J_{45} - iJ_{35})^2(-1+2\delta J_{14} - 2i J_{13})}{2((J_{35})^2+(J_{45})^2)}.
\]

In the case $J_{35}=J_{45}=0$, we can consider the basis $\{\alpha^1,\alpha^2,\alpha^3\}$ of $(1,0)$-forms defined by
\[
\alpha^1=e^{-i \frac{\theta}{2}} (e^1+ie^2), \quad \alpha^2=e^{i \frac{\theta}{2}} (e^3+ie^4), \quad \alpha^3=-e^5-ie^6,
\]
with
\[
\theta=
\begin{cases}
\text{arg}(-1+2\delta J_{14}-2i J_{13}), &-1+2\delta J_{14}-2i J_{13} \neq 0,\\
0, &-1+2\delta J_{14}-2i J_{13} = 0,
\end{cases}
\]
yielding \eqref{147_J3}, with
\[
x=\frac{1}{2} \; | -1+2\delta J_{14}-2i J_{13} |. \qedhere
\]
\end{proof}

\begin{proposition} \label{cpx-152_154}
For every complex structure $J$ on the Lie algebra $\mathfrak{s}_{6.152}$, there exists a basis of $(1,0)$-forms $\{\alpha^1,\alpha^2,\alpha^3\}$ whose structure equations are
\begin{equation} \label{152_J1}
\begin{cases}
d\alpha^1=\alpha^{12} - \Re(z_2) \alpha^{13} - \alpha^{1 \overline{2}} + \Re(z_2)\alpha^{1 \overline{3}} + z_1 \alpha^{23} + z_2 \alpha^{2 \overline{2}} - (\Re(z_2)z_2 + i \delta) \alpha^{2 \overline{3}} \\
\hspace{2.8em} +(z_1-\Re(z_2)z_2) \alpha^{3 \overline{2}} + \left( \Re(z_2)^2 z_2 - \Re(z_2)z_1 + \frac{i}{2} \delta \overline{z}_2 \right) \alpha^{3 \overline{3}}, \\
d\alpha^2=\Re(z_2) \alpha^{23} + \Re(z_2) \alpha^{3 \overline{2}} - \left( \Re(z_2)^2 + \frac{i}{2} \delta \right) \alpha^{3 \overline{3}}, \\
d\alpha^3=\alpha^{23} + \alpha^{3 \overline{2}} - \Re(z_2) \alpha^{3 \overline{3}},
\end{cases}
\end{equation}
for some $z_1,z_2 \in \C$, $\delta \in \{-1,1\}$,
or
\begin{equation} \label{152_J2}
\begin{cases}
d\alpha^1=\alpha^{12}-\Re(z_1)\alpha^{13} - \alpha^{1\overline{2}} + \Re(z_1) \alpha^{1\overline{3}} + \frac{1}{2\Im(z_2)^2}(|z_1|^2-\delta \Im(z_2)(z_2+i))\alpha^{23} \\
\hspace{2.8em} + \frac{z_1}{\Im(z_2)^2} \alpha^{2\overline{2}} - \frac{\Re(z_1)z_1}{\Im(z_2)^2} \alpha^{2 \overline{3}} - \frac{1}{2 \Im(z_2)^2}(z_1^2+\delta \Im(z_2)(z_2-i)) \alpha^{3\overline{2}} \\
\hspace{2.8em} + \frac{1}{2 \Im(z_2)^2} \left( \Re(z_1)z_1^2+ \delta \Re(z_1)\Im(z_2) z_2 - \delta \Im(z_1)\Im(z_2)\right) \alpha^{3 \overline{3}}, \\
d\alpha^2=-\Re(z_1) \alpha^{23} - \Re(z_1) \alpha^{3 \overline{2}} + \left(\Re(z_1)^2+ \frac{i}{2} \delta \Im(z_2) \right) \alpha^{3 \overline{3}}, \\
d\alpha^3= - \alpha^{23} - \alpha^{3 \overline{2}} + \Re(z_1) \alpha^{3 \overline{3}},
\end{cases}
\end{equation}
for some $z_1,z_2 \in \C$, $\Im(z_2) \neq 0$, $\delta \in \{-1,1\}$.

For every complex structure $J$ on the Lie algebra $\mathfrak{s}_{6.154}^0$, there exists a basis of $(1,0)$-forms $\{\alpha^1,\alpha^2,\alpha^3\}$ whose structure equations are
\begin{equation} \label{154_J}
\begin{cases}
d\alpha^1=\alpha^{12}-\Re(z)\alpha^{13} - \alpha^{1\overline{2}} + \Re(z) \alpha^{1\overline{3}} + \frac{1}{2x^2}(|z|^2-x(y+i))\alpha^{23} \\
\hspace{2.8em} + \frac{z}{x^2} \alpha^{2\overline{2}} - \frac{\Re(z)z}{x^2} \alpha^{2 \overline{3}} - \frac{1}{2 x^2}(z^2+x(y-i)) \alpha^{3\overline{2}} \\
\hspace{2.8em} + \frac{1}{2 x^2} \left( \Re(z)z^2+ xy\Re(z) -x\Im(z)\right) \alpha^{3 \overline{3}}, \\
d\alpha^2=-\Re(z) \alpha^{23} - \Re(z) \alpha^{3 \overline{2}} + \left(\Re(z_1)^2+ \frac{i}{2}x \right) \alpha^{3 \overline{3}}, \\
d\alpha^3= - \alpha^{23} - \alpha^{3 \overline{2}} + \Re(z) \alpha^{3 \overline{3}},
\end{cases}
\end{equation}
for some $z \in \C$, $x,y \in \R$, $x \neq 0$.
\end{proposition}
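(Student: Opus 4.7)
The strategy mirrors that used in the proof of Proposition \ref{cpx-145_147}. To handle both Lie algebras simultaneously, I would consider the one-parameter family
\[
\mathfrak{g} = (f^{35}+f^{26},\; f^{34}-f^{16}+\varepsilon f^{56},\; f^{45},\; -f^{56},\; f^{46},\; 0), \quad \varepsilon \in \{0,1\},
\]
which recovers $\frs{6.154}^0$ for $\varepsilon=0$ and $\frs{6.152}$ for $\varepsilon=1$. Writing the generic endomorphism $J=(J_{jk})_{j,k=1,\ldots,6}$ with respect to $\{f_1,\ldots,f_6\}$, the plan is to impose $J^2=-\text{Id}$ and $N^J=0$ and to use these conditions iteratively — evaluating $f^k(N^J(f_i,f_j))$ for strategically chosen triples $(i,j,k)$ — to eliminate or express most of the entries of $J$. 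Since the nilradical $\mathfrak{n}_{5.2}$ has a longer descending central series than $\mathfrak{n}_{5.1}$, an early step should propagate the nilradical filtration through $J$, forcing the block of entries $J_{jk}$ mapping into lower pieces of the filtration to vanish. Many of these reductions can be carried out uniformly in $\varepsilon$, so a large common portion of the computation serves both Lie algebras.

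Once $J$ is reduced to a parametric form, the analysis branches. For $\frs{6.154}^0$ the reduction should yield a single family, while for $\frs{6.152}$ an extra degree of freedom comes from the $\varepsilon f^{56}$ term in $df^2$, producing a dichotomy analogous to the $J_{34}=\pm J_{12}$ split in Proposition \ref{cpx-145_147}; this dichotomy is precisely what gives rise to the two presentations \eqref{152_J1} and \eqref{152_J2}. At each branching point I would identify the algebraic quantity (some polynomial in the remaining $J_{jk}$) whose vanishing or non-vanishing distinguishes the two cases, and then solve separately, being careful to track when genuinely new normal forms appear versus when the branches are related by an automorphism of $\mathfrak{g}$.

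For each surviving parametric family of complex structures, the final step is to extract a $J$-adapted real basis $\{e_1,\,e_2=Je_1,\,e_3,\,e_4=Je_3,\,e_5,\,e_6=Je_5\}$ with $e_1,e_3,e_5$ chosen as suitable linear combinations of $f_1,\ldots,f_6$ respecting the nilradical filtration, and then to define
\[
\alpha^1 = \zeta_1(e^1+ie^2), \quad \alpha^2 = \zeta_2(e^3+ie^4), \quad \alpha^3 = \zeta_3(e^5+ie^6),
\]
with $\zeta_1,\zeta_2,\zeta_3 \in \C-\{0\}$ tuned so that the resulting structure equations match \eqref{152_J1}, \eqref{152_J2}, or \eqref{154_J} exactly. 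The parameters $z_1, z_2, z, \delta, x, y$ appearing in the target equations should then be expressible in terms of the surviving free entries of $J$, and I would verify that their stated ranges are indeed attained.

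The main obstacle is the sheer combinatorial bulk of the Nijenhuis constraints: the system of polynomial equations is large, and without a careful ordering of the triples $(i,j,k)$ the computation branches prematurely into many spurious sub-cases. A secondary difficulty is cosmetic but non-trivial — choosing the scalings $\zeta_i$ and the initial vectors $e_1,e_3,e_5$ so that the parameters surface in the precise normalized form prescribed by the statement, in particular absorbing the factors of $\tfrac{i}{2}\delta$ and the combinations $\Re(z_i)^2 z_i - \Re(z_i)z_j$ that appear in the coefficients of \eqref{152_J1}–\eqref{154_J}.
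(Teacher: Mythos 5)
Your proposal follows essentially the same route as the paper's proof: the paper also treats both algebras at once via the $\varepsilon$-family $(f^{35}+f^{26},f^{34}-f^{16}+\varepsilon f^{56},f^{45},-f^{56},f^{46},0)$, writes the generic matrix $J=(J_{jk})$, eliminates entries by iterating $N^J=0$ and $J^2=-\mathrm{Id}$ (obtaining in particular $J_{63}\neq 0$, $J_{12}=\delta\in\{-1,1\}$), and then splits on a sign dichotomy $\eta=\pm\delta$ coming from the factorization of $\nj{4}{5}{3}$, with the branch $\eta=\delta$ forcing $\varepsilon=1$ (hence two normal forms \eqref{152_J1}--\eqref{152_J2} for $\frs{6.152}$ and only \eqref{154_J} for $\frs{6.154}^0$), exactly the branching structure you anticipate. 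The final step in the paper is likewise a $J$-adapted $(1,0)$-coframe, concretely $\alpha^1=f^1+iJf^1$, $\alpha^2$ a suitable multiple of $f^3+iJf^3$, $\alpha^3=f^4+iJf^4$, matching your tuned-scaling construction.
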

\begin{proof}
As done in the proof of the previous theorem, it is convenient to study the Lie algebras $\frs{6.152}$ and $\frs{6.154}^0$ together, by considering the Lie algebra
\[
(f^{35}+f^{26},f^{34}-f^{16} +\varepsilon f^{56},f^{45},-f^{56},f^{46},0), \quad \varepsilon \in \{0,1\}.
\]
We then write the matrix $J_{ij}$ representing the generic $J$ and impose conditions $J^2=-\text{Id}$ and $N^J=0$.

First,
\[
\nj{1}{2}{6}=(J_{61})^2+(J_{62})^2,
\]
forces $J_{61}=J_{62}=0$. From
\[
\nj{4}{5}{6}=-\left((J_{64})^2+(J_{65})^2+J_{63}(J_{44}+J_{55})\right), \quad (J^2)_{66}=J_{36}J_{63}+J_{46}J_{64}+J_{56}J_{65}+(J_{66})^2
\]
we deduce $J_{63} \neq 0$, so that
\[
\nj{k}{4}{6}=J_{5k}J_{63}, \quad \nj{k}{5}{6}=-J_{4k}J_{63}, \quad k \in {1,2},
\]
yield $J_{41}=J_{51}=J_{42}=J_{52}=0$. In the same way,
\[
\nj{2}{3}{3}=-J_{31}J_{63}, \quad \nj{3}{4}{6}=J_{63}(J_{53}+J_{65}), \quad \nj{3}{5}{6}=-J_{63}(J_{43}+J_{64})
\]
force $J_{31}=0$, $J_{65}=-J_{53}$, $J_{64}=-J_{43}$ and
\[
\nj{2}{4}{3}=-(J_{32})^2,\quad \nj{1}{3}{1}=J_{63}(J_{12}+J_{21}), \quad \nj{1}{3}{2}=J_{63}(J_{22}-J_{11})
\]
imply
$J_{32}=0$, $J_{21}=-J_{12}$, $J_{22}=J_{11}$.
Having
\[
\nj{1}{6}{1}=2J_{11}J_{12}, \quad \nj{1}{6}{2}=(J_{11})^2-(J_{12})^2+1,
\]
we must have $J_{11}=0$ and $J_{12}=\delta \in \{-1,1\}$.
We continue by computing
\begin{align*}
\nj{3}{4}{4}&=2J_{43}J_{53}+J_{45}J_{63}+J_{54}J_{63}, \\ \nj{3}{5}{4}&=-(J_{43})^2+(J_{53})^2 - J_{44}J_{63}+J_{55}J_{63}, \\
\nj{3}{6}{4}&=-J_{43}J_{45}+J_{44}J_{53}-J_{53}J_{66}+J_{56}J_{63},
\end{align*}
whose vanishing we can impose by setting
\begin{align*}
J_{54}&=-J_{45}-2 \frac{J_{43}J_{53}}{J_{63}}, \\
J_{55}&=J_{44} +\frac{(J_{43})^2-(J_{53})^2}{J_{63}}, \\
J_{56}&=\frac{1}{J_{63}} \left( J_{43}J_{45}-J_{44}J_{53}+J_{53}J_{66} \right).
\end{align*}
Then, having
\begin{align*}
\nj{3}{4}{3}&=J_{33}J_{53}-J_{43}J_{45}-J_{44}J_{53}+J_{35}J_{63}-2 \frac{(J_{43})^2J_{53}}{J_{63}}, \\ \nj{4}{5}{6}&=-2\left((J_{43})^2+J_{44}J_{63} \right),
\end{align*}
we can set
\[
J_{35}=-\frac{1}{J_{63}}\left(J_{33}J_{53}-J_{43}J_{45}-J_{44}J_{53} \right) + 2 \frac{(J_{43})^2J_{53}}{(J_{63})^2}, \quad J_{44}=-\frac{(J_{43}^2)}{J_{63}}.
\]
Now, we have
\begin{align*}
\nj{3}{5}{3}&=-J_{33}J_{43}-J_{34}J_{63}-J_{45}J_{63}-\frac{J_{43}(J_{53})^2}{J_{63}}, \\
(J^2)_{23}&=-\delta J_{13}+J_{23}J_{33}+J_{24}J_{43}+J_{25}J_{53}+J_{26}J_{63}, \\
(J^2)_{63}&=-(J_{43})^2-(J_{53})^2+J_{33}J_{63}+J_{63}J_{66},
\end{align*}
yielding
\begin{align*}
J_{34}&=-\frac{1}{J_{63}} \left( J_{33}J_{43}+J_{45}J_{53} \right), \\
J_{26}&=\frac{1}{J_{63}} \left( J_{12}J_{13}-J_{23}J_{33}-J_{24}J_{43}-J_{25}J_{53} \right), \\
J_{66}&=-J_{33}- \frac{ (J_{43})^2+(J_{53})^2 }{J_{63}}.
\end{align*}
Using $J^2=-\text{Id}$, we can look at
\begin{align*}
(J^2)_{13}&=J_{12}J_{23}+J_{13}J_{33}+J_{14}J_{43}+J_{15}J_{53}+J_{16}J_{63}, \\
(J^2)_{33}&=(J_{33})^2+J_{36}J_{63} - \frac{J_{33}}{J_{63}} \left((J_{43})^2+(J_{53})^2 \right), \\
(J^2)_{43}&=J_{33}J_{43}+J_{45}J_{53} + J_{46}J_{63} - \frac{(J_{43})^3}{J_{63}},
\end{align*}
obtaining
\begin{align*}
J_{16}&=- \frac{1}{J_{63}} \left( \delta J_{23}+J_{13}J_{33}+J_{14}J_{43}+J_{15}J_{53} \right), \\
J_{36}&=- \frac{1}{J_{63}} \left( (J_{33})^2+1 \right) - \frac{J_{33}}{(J_{63})^2} \left((J_{43})^2+(J_{53})^2 \right), \\
J_{46}&=- \frac{1}{J_{63}}(J_{33}J_{43}+J_{45}J_{53}) + \frac{(J_{43})^3}{(J_{63})^2}.
\end{align*}
We can now consider
\[
\nj{4}{5}{3}=-\frac{1}{(J_{63})^2} (J_{43}+J_{45}J_{63}+J_{63})(J_{43}J_{53}+J_{45}J_{63}-J_{63}),
\]
which we can impose to equal zero by introducing $\eta \in \{-1,1\}$ and setting
\[
J_{45}=\eta - \frac{J_{43}J_{53}}{J_{63}}.
\]
Now,
\[
(J^2)_{15}=\delta J_{25} + \eta J_{14} + \frac{1}{J_{63}} ( \delta J_{23}J_{53} + \eta J_{13}J_{43})
\]
yields
\[
J_{25}=\delta \eta J_{14} - \frac{1}{J_{63}}(J_{23}J_{53} + \delta \eta J_{13}J_{43}).
\]
Since $\delta,\eta \in \{-1,1\}$, we can either have $\eta=\delta$ or $\eta=-\delta$. We start by discussing the former case, considering now
\[
\nj{3}{4}{2}=-2+ \delta \varepsilon J_{63},
\]
which forces $\varepsilon = 1$, meaning that we are now working on the Lie algebra $\frs{6.152}$, and
\[
J_{63}=2 \delta,
\]
while
\[
\nj{3}{5}{2}=2\delta J_{33}- \frac{\delta}{2} ((J_{43})^2+(J_{53})^2)
\]
implies
\[
J_{33}=\frac{\delta}{4}((J_{43})^2+(J_{53})^2).
\]
We have now obtained both $N^J=0$ and $J^2=-\text{Id}$. A basis of $(1,0)$-forms is now provided by
\[
\alpha^1=f^1+iJf^1, \quad \alpha^2=f^3+iJf^3, \quad \alpha^3=f^4+iJf^4,
\]
whose associated structure equations are of the form \eqref{152_J1}, with
\begin{align*}
z_1&=\delta J_{24} - \frac{1}{8} ((J_{43})^2 + (J_{53})^2) + \frac{1}{2} J_{23}J_{43} +\frac{i}{4} \left(  2 \delta J_{13}J_{43} + \delta J_{43}J_{53} + 2 \delta + 4 J_{14}  \right) , \\
z_2&= \frac{1}{2} ( \delta J_{43} + i J_{53} ).
\end{align*}

We now go back and tackle the remaining case $\eta=-\delta$ and consider
\[
\nj{3}{4}{1}=-2J_{23}J_{43}-2J_{24}J_{63}+ \frac{\delta}{J_{63}}((J_{43})^2-(J_{53})^2),
\]
yielding
\[
J_{24}= - \frac{J_{23}J_{24}}{J_{63}}+ \frac{\delta}{2 (J_{63})^2}((J_{43})^2-(J_{53})^2),
\]
while
\[
\nj{3}{4}{2}=-\delta \varepsilon J_{63} + 2J_{13}J_{43}+2J_{14}J_{63}- 2 \frac{\delta J_{43} J_{53}}{J_{63}}
\]
forces
\[
J_{14}= \frac{1}{2} \delta \varepsilon -\frac{J_{13}J_{43}}{J_{63}} + \frac{\delta J_{43}J_{53}}{(J_{63})^2}.
\]
This is enough to guarantee $N^J=0$ and $J^2=-\text{Id}$. A basis of $(1,0)$-forms is given by
\[
\alpha^1=f^1+iJf^1, \quad \alpha^2=\frac{\delta}{2}J_{63}(f^3+iJf^3), \quad \alpha^3=f^4+iJf^4.
\]
In the case $\varepsilon=1$, we get \eqref{152_J2}, with
\[
z_1=\frac{\delta}{2} J_{43} + \frac{i}{2} J_{53}, \quad z_2=J_{33} + \frac{i}{2} \delta J_{63}.
\]
Instead, if $\varepsilon=0$, we obtain \eqref{154_J}, with
\[
z=\frac{\delta}{2} J_{43} + \frac{i}{2} J_{53}, \quad x= \frac{J_{63}}{2}, \quad y=J_{33}. \qedhere
\]
\end{proof}

\begin{remark}
An explicit computation shows that the family \eqref{152_J1} of complex structures on $\frs{6.152}$ satisfies $d \alpha^{123}=0$, so that it corresponds to the complex structure described in \cite[Proposition 3.9]{FOU}, while the complex structures described by \eqref{152_J2} do not admit a nonzero closed $(3,0)$-form. As a consequence, the non-existence results of the next sections regarding Hermitian metrics with respect to \eqref{152_J1} also follow from \cite{FOU}.
\end{remark}

Now, we are interested in obtaining analogous results for complex structures $J$ satisfying $J\mathfrak{n}^1 \not\subset \mathfrak{n}$ on Lie algebras with Heisenberg-type nilradical $\mathfrak{n}$. As we shall see in the next sections, it will not be necessary to study complex structures satisfying $J\mathfrak{n}^1 \subset \mathfrak{n}$ in this way, as there will be workarounds based on the use of algebraic data.

Let $\mathfrak{g}$ be a $2n$-dimensional strongly unimodular almost nilpotent Lie algebra with Heisenberg-type nilradical.
Following \cite{FP_alm}, we can consider the map
\begin{align*}
\varphi \colon \mathfrak{g}/\mathfrak{n} &\to \mathfrak{gl}(\mathfrak{n}/\mathfrak{n}^1), \\
X + \mathfrak{n} &\mapsto \pi(\text{ad}_X \rvert_{\mathfrak{n}}), 
\end{align*}
with $\pi(\text{ad}_X \rvert_{\mathfrak{n}})(Y+\mathfrak{n}^1) \coloneqq [X,Y] + \mathfrak{n}^1$, $Y \in \mathfrak{n}$. Since $\dim \mathfrak{g}/\mathfrak{n}=1$, $\varphi$ induces a well-defined endomorphism $A_{\mathfrak{g}}$ of $\mathfrak{n}/\mathfrak{n}^1$, up to non-zero rescalings.
Now, given a complex structure $J$ on $\mathfrak{g}$ satisfying $J\mathfrak{n}^1 \not\subset \mathfrak{n}$, it is possible to find a $J$-adapted basis for $\mathfrak{g}$, $\{e_1,\ldots,e_{2n}\}$, such that $\mathfrak{n}^1=\text{span}\left<e_1\right>$, $\mathfrak{k} \coloneqq \mathfrak{n} \cap J \mathfrak{n} = \text{span}\left< e_2,\ldots,e_{2n-1}\right>$ and $e_{2n}=Je_1 \notin \mathfrak{n}$.
By \cite{FP_alm}, with respect to such a basis, we have
\begin{gather*}
\text{ad}_{e_{2n}}\rvert_{\mathfrak{n}}=\text{diag}(0,A), \quad A \in \mathfrak{gl}(\mathfrak{n}_1),\;\operatorname{tr} A=0,\;[A,J\rvert_{\mathfrak{k}}]=0, \\
de^1=\eta \in \Lambda^{1,1} \mathfrak{k}^*-\{0\},
\end{gather*}
which encode the whole Lie bracket of $\mathfrak{g}$. Observe that $A$ is a representative of $A_{\mathfrak{g}}$, under the isomorphism $\mathfrak{n}/\mathfrak{n^1} \cong \mathfrak{k}$.  Now, the condition $[A,J\rvert_{\mathfrak{k}}]=0$ implies that, up to a change of basis of $\mathfrak{k}$, we can assume that the matrices representing $A$ and $J\rvert_{\mathfrak{k}}$ are in their respective real Jordan form, namely
\begin{equation} \label{A_J_Jordan}
A=\text{diag}(C^{k_1}_{a_1,b_1},\ldots,C^{k_l}_{a_l,b_l}), \quad
J=\text{diag}\left( J^{k_1}_{\varepsilon_1},\ldots,J^{k_l}_{\varepsilon_l}\right),
\end{equation}
with $k_j \in \mathbb{N}$, $a_j,b_j \in \R$,  $\varepsilon_j \in \{-1,1\}$,  $j=1,\ldots,l$,
where
\[
J^k_{\varepsilon} \coloneqq \text{diag}\left( \begin{pmatrix} 0 & -\varepsilon \\ \varepsilon & 0 \end{pmatrix}, \ldots, \begin{pmatrix} 0 & -\varepsilon \\ \varepsilon & 0 \end{pmatrix} \right) \in \mathfrak{gl}_{2k}.
\]
and
\begin{equation} \label{C^k}
C^k_{a,b}\coloneqq
    \begin{pmatrix}
    a & -b & 1  & 0  &        &         &   &    &    &                                                 \\
    b & a  & 0  & 1  &        &         &   &    &    &                                               \\
      &    & a  & -b &   1    &   0    &   &    &    &                                                \\
      &    & b  & a  &   0    &   1    &   &    &    &                                              \\
      &    &    &    & \ddots & \ddots  &   &    &    &                                                \\
      &    &    &    &        &         & a & -b & 1  & 0                 \\
      &    &    &    &        &         & b & a  & 0  & 1                                             \\
      &    &    &    &        &         &   &    & a  & -b             \\
      &    &    &    &        &         &   &    & b  & a 
  \end{pmatrix} \in \mathfrak{gl}_{2k}, \quad k \in \mathbb{N}, \quad a,b \in \R,
\end{equation}
is a real Jordan block corresponding to the conjugate pair of eigenvalues $a \pm ib$ of $A$, with $\text{Spec}(A)=\{a_1 \pm ib_1, \ldots, a_l \pm ib_l\}$ and $k_1+\ldots+k_l=n-1$. Notice that $C^k_{a,b}$ is similar to $C^k_{a,-b}$ via the change of basis changing the sign of every even-indexed basis vector. We assume our choice of basis yielding \eqref{A_J_Jordan} to have been made by fixing one of the two possible signs for each $b_j$, at the expense of allowing $J$ to feature some diagonal blocks with an inverted sign. To obtain a $J$-adapted basis for $\mathfrak{k}$ of the form $\{e_2,e_3=Je_2,\ldots,e_{2n-2},e_{2n-1}=Je_{2n-2}\}$ we may have to fix the some of some $b_j$'s, based on the value of the corresponding $\varepsilon_j$ in the expression of $J$. By doing so, we now have
\[
A=\text{diag}(C^{k_1}_{a_1,\varepsilon_1b_1},\ldots,C^{k_l}_{a_l,\varepsilon_lb_l}).
\]
Moreover, by rescaling $e_{2n}$ (and $e_1$ accordingly, in order to preserve $Je_1=e_{2n}$), we can assume the eigenvalues of $A$ to be properly normalized in order to match an arbitrary choice of rescaling of $A_{\mathfrak{g}}$, as can be read in the structure equations of each Lie algebra.

Having simplified the expression of $A$, we can then consider the $2$-form $\eta$, writing the generic $(1,1)$-form on $\mathfrak{k}$ with respect to the basis $\{e_2,\ldots,e_{2n-1}\}$, for example
\[
\eta= x_1 e^{23} +x_2 e^{45} + x_3 (e^{24}+e^{35}) + x_4 (e^{25}-e^{34}), \quad x_1,x_2,x_3,x_4 \in \R,
\]
in dimension six, and impose $A^* \eta =0$ to ensure that the Jacobi identity holds (see \cite{FP_alm}) and that $\eta$ has the right rank for the Lie algebra $\mathfrak{g}$ we are considering, by imposing the right power of $\eta$ to be the last non-vanishing one ($\mathfrak{n} \cong \mathfrak{h}_{2k+1} \oplus \R^{2(n-k)}$ forces $\eta^{k} \neq 0$, $\eta^{k+1}=0$). This, together with some further requirements dictated by the isomorphism class of $\mathfrak{g}$ (for example, focusing on the center of $\mathfrak{n}$, see the proof below), yield conditions on the coefficients of $\eta$.

Focusing on the six-dimensional case, we then consider the basis of $(1,0)$-forms given by
\[
\alpha^1=\zeta_1 (e^1+ie^{6}), \quad \alpha^2=\zeta_2 (e^2+ie^3), \quad \alpha^3=\zeta_3 (e^4+ie^5),
\]
where $\zeta_1,\zeta_2,\zeta_3 \in \C - \{0\}$ are chosen as to simplify the associated structure equations as much as possible.

\begin{proposition} \label{heis_cpx}
Let $\mathfrak{g}$ be a six-dimensional strongly unimodular almost nilpotent Lie algebra with nilradical $\mathfrak{n}$ satisfying $\dim \mathfrak{n}^1=1$. Then, every complex structure $J$ on $\mathfrak{g}$ satisfying $J \mathfrak{n}^1 \not\subset \mathfrak{n}$ admits a basis of $(1,0)$-forms $\{\alpha^1,\alpha^2,\alpha^3\}$ whose set of structure equations appears in Table \ref{table-cpxheis}, based on the isomorphism class of $\mathfrak{g}$.
\end{proposition}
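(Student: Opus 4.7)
The plan is to apply the reduction already set up before the statement. Given a complex structure $J$ on $\mathfrak{g}$ satisfying $J\mathfrak{n}^1\not\subset\mathfrak{n}$, a $J$-adapted basis $\{e_1,\dots,e_6\}$ exists so that $\mathfrak{n}^1=\langle e_1\rangle$, $\mathfrak{k}=\langle e_2,\dots,e_5\rangle$, $e_6=Je_1$, and the Lie bracket of $\mathfrak{g}$ is completely encoded by the pair $(A,\eta)$, where $A\in\mathfrak{gl}(\mathfrak{k})$ represents $\mathrm{ad}_{e_6}|_{\mathfrak{n}/\mathfrak{n}^1}$ and satisfies $\operatorname{tr}A=0$, $[A,J|_{\mathfrak{k}}]=0$, while $\eta=de^1\in\Lambda^{1,1}\mathfrak{k}^*\setminus\{0\}$. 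By simultaneous real Jordan reduction, $A$ can be written in the block form \eqref{A_J_Jordan}--\eqref{C^k}, with $J|_{\mathfrak{k}}$ in the corresponding standard form.

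The proof then proceeds case by case on the isomorphism class of $\mathfrak{g}$ among the Lie algebras of Theorem \ref{cpx-class} with Heisenberg-type nilradical. For each such $\mathfrak{g}$, I would first read off from the structure equations the similarity class of $A_{\mathfrak{g}}\in\mathfrak{gl}(\mathfrak{n}/\mathfrak{n}^1)$ (up to the overall rescaling inherent in its definition): this fixes the Jordan type of $A$ and its eigenvalue pairs. Next, I would write the generic $J$-invariant real $(1,1)$-form
\[
\eta=x_1\,e^{23}+x_2\,e^{45}+x_3(e^{24}+e^{35})+x_4(e^{25}-e^{34}),\qquad x_1,x_2,x_3,x_4\in\R,
\]
and impose the Jacobi identity $A^*\eta=0$, together with the rank condition needed to pin down the isomorphism class of $\mathfrak{n}$ (i.e.\ $\eta^2=0$ but $\eta\neq 0$ if $\mathfrak{n}\cong\mathfrak{h}_3\oplus\R^2$, and $\eta^2\neq 0$ if $\mathfrak{n}\cong\mathfrak{h}_5$) and any further constraint forcing the center of $\mathfrak{n}$ to have the correct dimension. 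These linear and quadratic conditions cut out an explicit family of admissible $\eta$'s in terms of the parameters $x_1,\dots,x_4$ and the eigenvalues of $A$.

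To extract canonical normal forms, I would exploit the residual freedom still available: (i) changes of basis of $\mathfrak{k}$ preserving both $J|_{\mathfrak{k}}$ and the Jordan form of $A$ (essentially the centralizer of $\{A,J|_{\mathfrak{k}}\}$ in $\mathrm{GL}(\mathfrak{k})$), (ii) rescalings of $e_6$ with corresponding rescalings of $e_1$ preserving $Je_1=e_6$, and (iii) the independent rescalings of $(1,0)$-vectors via $\zeta_1,\zeta_2,\zeta_3$ once one sets
\[
\alpha^1=\zeta_1(e^1+ie^6),\quad \alpha^2=\zeta_2(e^2+ie^3),\quad \alpha^3=\zeta_3(e^4+ie^5).
\]
The equations $d\alpha^2$, $d\alpha^3$ are read off directly from the real Jordan block structure of $A$, and $d\alpha^1$ is obtained by re-expressing $\eta$ in the basis $\{\alpha^j,\alpha^{\bar j}\}$; the $\zeta_j$'s are then chosen to absorb as many nonzero coefficients as possible, producing the rows of Table \ref{table-cpxheis}.

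The conceptual content is entirely contained in the reduction recalled above; the main obstacle is organizational. Each isomorphism class of $\mathfrak{g}$ typically splits into several subcases, according to whether certain $x_j$'s vanish and according to resonances among the eigenvalues of $A$ that enlarge its centralizer and modify the available normalizations. The bookkeeping required to treat these subcases, verify that the non-degeneracy of $\eta$ is preserved under every normalization, and match the resulting normal forms to the entries of Table \ref{table-cpxheis} is the bulk of the argument, but relies only on elementary linear algebra together with the structural constraints already derived.
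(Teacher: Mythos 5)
Your proposal is correct and follows essentially the same route as the paper: the paper's proof consists precisely of the reduction you describe (Jordan form of $A$ commuting with $J\rvert_{\mathfrak{k}}$, the generic $(1,1)$-form $\eta$ with $A^*\eta=0$ plus the rank and center conditions, then normalization via the residual basis changes and the scalings $\zeta_1,\zeta_2,\zeta_3$), carried out explicitly only for $\mathfrak{s}_{6.52}^{0,q}$ as a representative example with the remaining cases left to the same case-by-case bookkeeping you outline.
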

\begin{proof}
We only prove the result for the Lie algebra $\mathfrak{g}=\mathfrak{s}_{6.52}^{0,q}$, $q>0$,  as an explicit example of the general procedure described earlier.

Using the structure equations of the Lie algebra, we notice that, under the isomorphisms $\mathfrak{n}/\mathfrak{n^1} \cong \R \left< f_2,f_3,f_4,f_5 \right>$ and $\mathfrak{g}/\mathfrak{n} \cong \R \left<f_6\right>$, the endomorphism $A_{\mathfrak{g}}$ is diagonalizable with spectrum a uniform rescaling of $\{\pm i, \pm ib\}$. Let $J$ be a complex structure on $\mathfrak{g}$. Then, by \cite{FP_alm}, $J$ satisfies $J \mathfrak{n}^1 \not\subset \mathfrak{n}$ and, following the construction above, we can find a basis $\{\tilde{e}_1,\ldots,\tilde{e}_6\}$ of $\mathfrak{g}$ such that $\mathfrak{n}^1 = \R \left<\tilde{e}_1\right>$, $\tilde{e}_6=J\tilde{e}_1$, $\mathfrak{k} \coloneqq \mathfrak{n} \cap J\mathfrak{n}= \text{span}\left<\tilde{e}_2,\tilde{e}_3,\tilde{e}_4,\tilde{e}_5\right>$ and the matrix $A$ representing $\text{ad}_{\tilde{e}_6}\rvert_{\mathfrak{k}}$ and the one representing $J\rvert_{\mathfrak{k}}$ are of the form
\[
A=\text{diag}\left(\begin{pmatrix} 0 & 1 \\ -1 & 0 \end{pmatrix},\begin{pmatrix} 0 & q \\ -q & 0 \end{pmatrix}\right), \quad J\rvert_{\mathfrak{k}}=\text{diag}\left(\begin{pmatrix} 0 & -\varepsilon_1 \\ \varepsilon_1 & 0 \end{pmatrix},\begin{pmatrix} 0 & -\varepsilon_2 \\ \varepsilon_2 & 0 \end{pmatrix}\right), \quad \varepsilon_1,\varepsilon_2 \in \{-1,1\}.
\]
With respect to the new basis $\{e_1=\varepsilon_1 \tilde{e}_1,e_2=\tilde{e}_2,e_3=\varepsilon_1 \tilde{e}_3,e_4=\tilde{e}_4,e_5=\varepsilon_2 \tilde{e}_5,e_6=\varepsilon_1 \tilde{e}_6\}$, the matrix $A$ is now of the form
\[
A=\text{diag}\left(\begin{pmatrix} 0 & 1 \\ -1 & 0 \end{pmatrix},\begin{pmatrix} 0 & \varepsilon q \\ -\varepsilon q & 0 \end{pmatrix}\right),
\]
with $\varepsilon=\varepsilon_1\varepsilon_2$ and $J$ satisfying $Je_1=e_6$, $Je_2=e_3$, $Je_4=e_5$. Now, $de^1=\eta$, with $\eta$ of the form
\[
\eta= x_1 e^{23} +x_2 e^{45} + x_3 (e^{24}+e^{35}) + x_4 (e^{25}-e^{34}), \quad x_1,x_2,x_3,x_4 \in \R,
\]
We compute
\[
A^*\eta=(\varepsilon q-1) \left( x_3 (e^{25}-e^{34}) - x_4 (e^{24}+e^{35}) \right).
\]
Now, if $\varepsilon q \neq 1$, we must set $x_3=x_4=0$ and be left with $\eta= x_1 e^{23} +x_2 e^{45}$, with $x_1x_2 =0$ in order for $\eta^2$ to vanish, as imposed by $\mathfrak{n} \cong \mathfrak{h}_3 \oplus \R^2$. 
Instead, when $\varepsilon q =1$, we obtain $A\rvert_{\mathfrak{k}}=J\rvert_{\mathfrak{k}}$, implying that, after a proper diagonalization, we can assume $\eta$ to be in diagonal form, namely $x_3=x_4=0$, obtaining the same result as the previous case.
Now, knowing that the eigenvectors of $A$ with eigenvalue $\pm i q$ lie in the center of $\mathfrak{n}$, we can set $x_2=0$. Then, the basis of $(1,0)$-forms
\[
\alpha^1=\frac{1}{2}(e^1+ie^6), \quad \alpha^2=\frac{|x_1|^{\frac{1}{2}}}{2}(e^2+ie^3), \quad \alpha^3=\frac{1}{2}(e^4+ie^5),
\]
yields the structure equations in Table \ref{table-cpxheis}, with $\delta=\text{sgn}(x_1)$.
\end{proof}

Lastly, we consider almost abelian algebras, where we can use some of the previous strategies to simplify the expression of the Hermitian metric \eqref{om}, as well as the complex structure equations.
Let $\mathfrak{g}$ be a $2n$-dimensional almost abelian Lie algebras endowed with a Hermitian structure $(J,g)$. Then, as described in \cite{FP_gk}, there exists a basis $\{e_1,\ldots,e_{2n}\}$ such that
$\mathfrak{n}=\text{span}\left<e_1,\ldots,e_{2n-1}\right>$, $\mathfrak{n}^{\perp_g}=\R \left< e_{2n} \right>$, $e_1=-Je_{2n}$, $\lVert e_1 \rVert_g = \lVert e_{2n} \rVert_g =1$ and $\mathfrak{k} \coloneqq \mathfrak{n} \cap J\mathfrak{n} = \text{span}\left<e_2,\ldots,e_{2n-1}\right>$. Here, we are not assuming $\{e_2,\ldots,e_{2n-1}\}$ to be orthonormal. With respect to such a basis, the whole Lie bracket of $\mathfrak{g}$ is encoded in $\text{ad}_{e_{2n}}\rvert_{\mathfrak{n}}$, which, with respect to the previous basis, takes the matrix form
\[
\text{ad}_{e_{2n}}\rvert_{\mathfrak{n}}=\begin{pmatrix} a & 0 \\ v & A \end{pmatrix},
\]
with $a \in \R$, $v \in \mathfrak{k}$ and $A \in \mathfrak{gl}(\mathfrak{k})$, with $[A,J\rvert_{\mathfrak{k}}]=0$. As before, this final condition can be exploited to say that, up to changing basis for $\mathfrak{k}$, we can assume $Je_{2j}=e_{2j+1}$, $j=1,\ldots,n-1$ and $A$ to be in real Jordan form (see \eqref{C^k})
\[
A=\text{diag}(C^{k_1}_{a_1,b_1},\ldots,C^{k_l}_{a_l,b_l}),
\]
with $k_j \in \mathbb{N}$, $a_j,b_j \in \R$, $j=1,\ldots,l$, where $\text{Spec}(A)=\{a_1\pm ib_1,\ldots,a_l \pm ib_l\}$ can be deduced, up to a uniform scaling, from the structure equations of $\mathfrak{g}$, and the signs of $b_1,\ldots,b_l$ in $A$ are determined by the behavior of $J$ and by a possible sign swap for $e_{2n}$ (and, consequently, $e_1$), as in the case of Heisenberg-type nilradical. Finally, we can rescale $e_{2n}$ and $e_1$ to apply a uniform rescaling to $a$ and $A$, in order to better match the original structure equations of $\mathfrak{g}$.

We can consider a basis of $(1,0)$-forms of the kind
\[
\alpha^1=\frac{1}{2}(e^1+ie^{2n}), \quad \alpha^{j}=\frac{1}{2}(e^{2j-2}+ie^{2j-1}),\;j=2,\ldots,n,
\]
and write the corresponding structure equations. Looking at the metric $g$, we can exploit the orthogonal splitting
\[
\mathfrak{g}=\R \left< e_1 \right> \oplus \R\left<e_2,\ldots,e_{2n-1}\right> \oplus \R \left< e_{2n} \right>
\]
to deduce that the fundamental form must be of the form
\[
\omega=i\lambda_1\alpha^{1 \overline{1}} + \omega_{\mathfrak{k}},
\]
with $\lambda_1>0$ and $\omega_{\mathfrak{k}} \in \Lambda^{1,1} \mathfrak{k}^*$ the fundamental form of the restriction of $g$ to $\mathfrak{k}$.

\begin{proposition} \label{almab_cpx}
Let $\mathfrak{g}$ be a six-dimensional unimodular almost abelian Lie algebra. Then, every Hermitian structure $(J,g)$ on $\mathfrak{g}$ admits a basis of $(1,0)$-forms $\{\alpha^1,\alpha^2,\alpha^3 \}$ satisfying the structure equations appearing in Table \ref{table-cpxab} (based on the isomorphism class of $\mathfrak{g}$), and such that
\begin{equation} \label{om_almab}
\omega= i (\lambda_1 \alpha^{1 \overline{1}} + \lambda_2 \alpha^{2 \overline{2}} + \lambda_3 \alpha^{3 \overline{3}}) + w \alpha^{2 \overline{3}} - \overline{w} \alpha^{3 \overline{2}},
\end{equation}
with $\lambda_1,\lambda_2,\lambda_3 \in \R_{>0}$ and $w \in \C$ satisfying the positivity condition
\begin{equation}
\lambda_2 \lambda_3>|w|^2.
\end{equation}
\end{proposition}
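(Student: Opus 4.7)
The plan is to apply the general procedure developed in the text immediately preceding the statement. Given an arbitrary Hermitian structure $(J,g)$ on $\mathfrak{g}$, I invoke the existence of a basis $\{e_1,\ldots,e_6\}$ satisfying the listed properties: $\mathfrak{n}=\langle e_1,\ldots,e_5\rangle$, $\mathfrak{n}^{\perp_g}=\langle e_6\rangle$, $e_1=-Je_6$, $\lVert e_1\rVert_g=\lVert e_6\rVert_g$, $\mathfrak{k}=\mathfrak{n}\cap J\mathfrak{n}=\langle e_2,\ldots,e_5\rangle$, $Je_{2j}=e_{2j+1}$ for $j=1,2$, and $\mathrm{ad}_{e_6}|_{\mathfrak{n}}$ block-lower-triangular with bottom-right block $A$ in real Jordan form commuting with $J|_{\mathfrak{k}}$. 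Passing to the $(1,0)$-basis $\alpha^1=\tfrac{1}{2}(e^1+ie^6)$, $\alpha^j=\tfrac{1}{2}(e^{2j-2}+ie^{2j-1})$, the structure equations for $\alpha^1,\alpha^2,\alpha^3$ are determined entirely by the scalars $a,b_1,\ldots,b_l$ and the vector $v$ appearing in $\mathrm{ad}_{e_6}|_{\mathfrak{n}}$.

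For Table \ref{table-cpxab}, I would proceed case by case over the (unimodular) almost abelian Lie algebras listed in Theorem \ref{cpx-class}: $\mathfrak{s}_{3.3}^{0}\oplus\R^3$, $\mathfrak{s}_{4.3}^{-1/2,-1/2}\oplus\R^2$, $\mathfrak{s}_{4.5}^{p,-p/2}\oplus\R^2$, the various $\mathfrak{s}_{5.\cdot}\oplus\R$, and the $\mathfrak{s}_{6.14}$–$\mathfrak{s}_{6.21}$ families. For each one, the spectrum of $A_\mathfrak{g}$ (up to an overall scaling absorbable by rescaling $e_1$ and $e_6$) is read off the given structure equations, and the translation to $d\alpha^j$ is a direct computation. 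Up to the freedoms already accounted for (sign of each $b_j$, uniform rescaling), this produces the canonical normal forms tabulated in Table \ref{table-cpxab}.

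The claim about $\omega$ reduces to a linear-algebra observation about orthogonal splittings. Since $\lVert e_1\rVert_g=\lVert e_6\rVert_g$, $Je_1=e_6$, and both $e_1$ and $e_6$ are orthogonal to $\mathfrak{k}$, the subspaces $\langle e_1,e_6\rangle$ and $\mathfrak{k}$ are $J$-invariant and $g$-orthogonal, so $\omega$ decomposes as $\omega|_{\langle e_1,e_6\rangle}\oplus\omega|_{\mathfrak{k}}$. The first summand gives a multiple of $e^{16}$, i.e.\ a real positive multiple of $i\alpha^{1\overline{1}}$, contributing the $i\lambda_1\alpha^{1\overline{1}}$ term and forcing $\lambda_1>0$. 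The orthogonality $\omega(e_1,\mathfrak{k})=g(e_6,\mathfrak{k})=0$ and $\omega(e_6,\mathfrak{k})=-g(e_1,\mathfrak{k})=0$ kill the would-be cross terms $w_2\alpha^{1\overline{3}}-\overline{w}_2\alpha^{3\overline{1}}$ and $w_3\alpha^{1\overline{2}}-\overline{w}_3\alpha^{2\overline{1}}$ in \eqref{om}. The remaining summand $\omega|_{\mathfrak{k}}$ is an arbitrary positive $(1,1)$-form on the two-dimensional complex space $\mathfrak{k}\otimes\C$, producing the diagonal entries $i\lambda_2\alpha^{2\overline{2}}+i\lambda_3\alpha^{3\overline{3}}$ with $\lambda_2,\lambda_3>0$ and the single off-diagonal term $w\alpha^{2\overline{3}}-\overline{w}\alpha^{3\overline{2}}$, with positivity of the associated Hermitian matrix equivalent to $\lambda_2\lambda_3>|w|^2$.

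The main obstacle is bookkeeping in the Jordan-form enumeration: for each isomorphism class one must identify the correct spectrum of $A_\mathfrak{g}$, fix the signs of the $b_j$'s (possibly flipping $e_6$) so that $J|_\mathfrak{k}$ takes the standard form $\mathrm{diag}(J_{+1}^1,J_{+1}^1)$ dictated by the conventions $Je_{2j}=e_{2j+1}$, and choose the overall scaling of $e_6$ so that $a$ and $A$ match the scalar coefficients appearing in the structure equations of Theorem \ref{cpx-class}. Once this is done carefully, Table \ref{table-cpxab} is produced mechanically and the assertions about $\omega$ and its positivity follow from the orthogonal-splitting argument above.
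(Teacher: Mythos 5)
Your proposal is correct and follows essentially the same route as the paper: the paper proves this proposition exactly by the preceding discussion — the adapted basis of \cite{FP_gk} with $\mathfrak{n}^{\perp_g}=\R\left<e_6\right>$, $e_1=-Je_6$, $\mathfrak{k}=\mathfrak{n}\cap J\mathfrak{n}$, the simultaneous real Jordan/standard form for $A$ and $J\rvert_{\mathfrak{k}}$ coming from $[A,J\rvert_{\mathfrak{k}}]=0$ together with the sign and rescaling freedoms, followed by a case-by-case translation into $(1,0)$-structure equations — and the orthogonal splitting $\mathfrak{g}=\R\left<e_1\right>\oplus\mathfrak{k}\oplus\R\left<e_6\right>$ yielding $\omega=i\lambda_1\alpha^{1\overline{1}}+\omega_{\mathfrak{k}}$, which is precisely your argument killing $w_2,w_3$ and giving the positivity condition $\lambda_2\lambda_3>|w|^2$. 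No gaps beyond the routine bookkeeping you already flag.
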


\section{1\textsuperscript{st}-Gauduchon structures} \label{sec_1stG} 
We start our list of classification results regarding Hermitian structures in six dimensions with 1\textsuperscript{st}-Gauduchon structures, i.e., Hermitian structures $(J,g)$ satisfying $\partial \overline\partial \omega \wedge \omega=0$. We do so in order to partly simplify our later discussion regarding SKT structures, as we can exploit the trivial fact that Lie algebras not admitting 1\textsuperscript{st}-Gauduchon structures certainly do not admit SKT structures.

\begin{theorem} \label{1stG_class}
Let $\mathfrak{g}$ be a six-dimensional strongly unimodular almost nilpotent Lie algebra. Then, $\mathfrak{g}$ admits 1\textsuperscript{st}-Gauduchon structures $(J,g)$ if and only if it is isomorphic to one among
\begin{alignat*}{3}
&\frs{3.3}^0 \oplus \R^3,                         &\quad &\mathfrak{h}_3 \oplus \mathfrak{s}_{3.3}^0,  &\quad &\frs{4.3}^{-\frac{1}{2},-\frac{1}{2}} \oplus \R^2,\\ 
&\frs{4.5}^{p,-\frac{p}{2}} \oplus \R^2,\;  {\scriptstyle p>0},  &\quad &\frs{4.6} \oplus \R^2,                       &\quad &\frs{4.7}\oplus \R^2, \\
&\frs{5.4}^0 \oplus \R,                           &\quad &\frs{5.13}^{0,0,r} \oplus \R, \; {\scriptstyle r>0},        &\quad &\frs{5.16} \oplus \R,  \\
&\frs{6.14}^{-\frac{1}{4},-\frac{1}{4}},          &\quad &\frs{6.16}^{p,-4p}, \; {\scriptstyle p<0},                  &\quad &\frs{6.17}^{1,q,q,-2(1+q)},\; {\scriptstyle 0<q \leq 1},  \\
&\frs{6.19}^{p,p,q,-p-\frac{q}{2}},\; {\scriptstyle p(2p+q) \leq 0, \, p,q \neq 0},  &\quad &\frs{6.21}^{p,q,r,-2(p+q)},\;{\scriptstyle pq \geq 0,\, |p| \geq |q|,\, p \neq 0,\, r > 0},   &\quad
&\frs{6.25},  \\
&\frs{6.51}^{p,0},\; {\scriptstyle p>0},          &\quad &\frs{6.52}^{0,q}, \; {\scriptstyle q>0},     &\quad &\frs{6.158},  \\
&\frs{6.159},                                     &\quad &\frs{6.164}^p,\; {\scriptstyle a >0},        &\quad &\frs{6.166}^p,\; {\scriptstyle 0<|p|\leq 1, \,p \neq 1}.
\end{alignat*}
Explicit examples of 1\textsuperscript{st}-Gauduchon structures on these Lie algebras are provided in Tables \ref{table-exab} and \ref{table-exheis}.
\end{theorem}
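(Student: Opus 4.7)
The plan is to leverage the classification of complex structures from Theorem \ref{cpx-class} together with the normal forms for $(J,g)$ developed in Section \ref{sec_mod}, and then to carry out a case-by-case analysis on each isomorphism class. A useful simplification is that in complex dimension three the 1\textsuperscript{st}-Gauduchon condition reduces to a single scalar equation, since $\partial\overline\partial\omega\wedge\omega$ is a top form, proportional to $\omega^3$. Concretely, for each choice of Lie algebra $\mathfrak{g}$ and complex structure $J$ on it, we compute the scalar function $f(\lambda_i,w_j)$ for which $\partial\overline\partial\omega\wedge\omega = f(\lambda_i,w_j)\,\omega^3$ relative to a generic fundamental form $\omega$ as in \eqref{om}, and we check whether $f=0$ admits solutions compatible with the positivity constraints.

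I would organize the analysis in three families matching the classification of the nilradical: (i) the almost abelian case ($\mathfrak{n}\cong\R^5$), (ii) the Heisenberg-type case ($\dim\mathfrak{n}^1=1$), and (iii) the four remaining Lie algebras $\frs{6.145}^0,\frs{6.147}^0,\frs{6.152},\frs{6.154}^0$ with $\dim\mathfrak{n}^1\ge 2$. For family (i) I would apply Proposition \ref{almab_cpx}, using the further simplified metric \eqref{om_almab} and the structure equations in Table \ref{table-cpxab} to compute $\partial\overline\partial\omega\wedge\omega$ directly. For family (ii), the case $J\mathfrak{n}^1\not\subset\mathfrak{n}$ is handled through Proposition \ref{heis_cpx} and Table \ref{table-cpxheis}, while the complementary case $J\mathfrak{n}^1\subset\mathfrak{n}$ is treated using the algebraic data approach indicated right before Proposition \ref{heis_cpx} (so that we never need the explicit complex structure in this subcase, only the constraint that $J$ preserves the nilradical together with its commutator). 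For family (iii), Propositions \ref{cpx-145_147} and \ref{cpx-152_154} give a complete list of complex structures up to equivalence; on each model \eqref{145_J1}--\eqref{154_J} it is a direct calculation to compute $d\omega$ from \eqref{om}, extract $\partial\omega$ via bidegree, and then assemble $\partial\overline\partial\omega\wedge\omega$.

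For every Lie algebra appearing in the statement I would exhibit a concrete Hermitian pair $(J,g)$ satisfying $\partial\overline\partial\omega\wedge\omega=0$; these examples are collected in Tables \ref{table-exab} and \ref{table-exheis}, and once produced their verification is routine. The negative direction is the substantive one: for each Lie algebra of Theorem \ref{cpx-class} that is absent from the list, one must show that the scalar equation $f(\lambda_i,w_j)=0$ has no solution satisfying the positivity constraints. I expect the main obstacle to be precisely this exclusion step, since the number of cases is large and, on parameter-dependent Lie algebras such as $\frs{6.17}^{1,q,q,-2(1+q)}$, $\frs{6.19}^{p,p,q,-p-q/2}$ and $\frs{6.21}^{p,q,r,-2(p+q)}$, one has to isolate the sharp range of structural parameters ($q>0$, $p(2p+q)\le 0$, $pq\ge 0$, etc.) that makes $f$ change sign.

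In practice I would attack each obstruction by selecting a small number of coordinate monomials inside $\partial\overline\partial\omega\wedge\omega$ whose coefficients, read off against the basis \eqref{om}, already force $f$ to be, e.g., a sum of squares (up to an overall sign determined by the structural constants) or a definite quadratic form in $\lambda_2,\lambda_3,\Re w,\Im w$, so that positivity of the $\lambda_i$'s together with the Hermitian positivity inequalities make vanishing impossible; when $f$ is not of one sign, one adjusts the parameters of $\omega$ to produce the examples in the tables. The parameter restrictions appearing in the list (e.g.\ $p<0$ for $\frs{6.16}^{p,-4p}$, or $r>0$ for $\frs{6.21}$) arise naturally from this sign analysis. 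This dichotomy between sign-definiteness and explicit construction is the mechanism by which the complete list in the statement is pinned down.
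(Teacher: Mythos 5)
Your proposal follows essentially the same route as the paper: reduce to the normal forms of Propositions \ref{almab_cpx}, \ref{heis_cpx}, \ref{cpx-145_147} and \ref{cpx-152_154}, compute the single scalar appearing in $\partial\overline\partial\omega\wedge\omega$ against the generic metric \eqref{om} (resp.\ \eqref{om_almab} in the almost abelian case), and settle each isomorphism class by a sign-definiteness analysis for the exclusions together with the explicit examples of Tables \ref{table-exab} and \ref{table-exheis}. The only divergence is minor and harmless: for the Heisenberg-type nilradical the paper never needs the subcase $J\mathfrak{n}^1\subset\mathfrak{n}$, since by \cite{FP_alm} every complex structure on the algebras to be excluded ($\frs{6.44}$, $\frs{6.162}^{1}$, $\frs{6.165}^p$, $\frs{6.166}^1$, $\frs{6.167}$) automatically satisfies $J\mathfrak{n}^1\not\subset\mathfrak{n}$, so the extra algebraic-data treatment you sketch for that subcase is redundant rather than wrong.
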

\begin{proof}
We start with the almost abelian case. Comparing the Lie algebras of the statement with the ones of Theorem \ref{cpx-class}, we need to prove the non-existence of 1\textsuperscript{st}-Gauduchon structures on the almost abelian Lie algebras
\begin{alignat*}{3}
&\frs{5.8}^0 \oplus \R, &\quad &\frs{5.9}^{1,-1,-1} \oplus \R, &\quad  &\frs{5.11}^{p,p,-p} \oplus \R,\; {\scriptstyle p>0}, \\
&\frs{5.13}^{p,-p,r} \oplus \R, \;{\scriptstyle p \neq 0, \,r>0}, &\quad &\frs{6.17}^{1,q,q,-2(1+q)},\; {\scriptstyle -1 < q < 0}, &\quad &\frs{6.18}^{1,-\frac{3}{2},-\frac{3}{2}}, \\
&\frs{6.19}^{p,p,q,-p-\frac{q}{2}}, \; {\scriptstyle p(2p+q) > 0,\, q \neq 0}, &\quad &\frs{6.20}^{p,p,-\frac{3}{2}p}, \; {\scriptstyle p>0}, &\quad &\frs{6.21}^{p,q,r,-2(p+q)}, \; {\scriptstyle pq<0, \, |p| \geq |q|,\, q \neq -p,\, r > 0}.
\end{alignat*}
To do so, we consider the generic Hermitian structure, which, by Proposition \ref{almab_cpx}, always admits a basis of $(1,0)$-forms satisfying the structure equations listed in Table \ref{table-cpxab} and making the metric of the form
\[
\omega= i (\lambda_1 \alpha^{1 \overline{1}} + \lambda_2 \alpha^{2 \overline{2}} + \lambda_3 \alpha^{3 \overline{3}}) + w \alpha^{2 \overline{3}} - \overline{w} \alpha^{3 \overline{2}},
\]
with $\lambda_1,\lambda_2,\lambda_3 \in \R_{>0}$, $w \in \C$ and $\lambda_2 \lambda_3>|w|^2$.
By a direct computation we can show that the $6$-form $\partial \overline \partial \omega \wedge \omega$ cannot vanish for each of the Lie algebras in the previous list.

Now, let us focus on Lie algebras having nilradical with one-dimensional commutator. We need to prove that 1\textsuperscript{st}-Gauduchon structures do not exist on the following Lie algebras:
\[
\frs{6.44}, \qquad \frs{6.162}^{1}, \qquad \frs{6.165}^p, \,{\scriptstyle p>0}, \qquad \frs{6.166}^1, \qquad
\frs{6.167}.
\]
By \cite{FP_alm}, every complex structure on these Lie algebras satisfies $J \mathfrak{n}^1 \not\subset \mathfrak{n}$, meaning we can refer to Proposition \ref{heis_cpx} to extract a suitable basis of $(1,0)$-forms satisfying the structure equations in Table \ref{table-exheis} and consider the generic Hermitian metric with fundamental form \eqref{om}.
From the computation of $\partial \overline\partial \omega \wedge \omega$ in each case  we can deduce the claim.

Finally, we have to tackle the Lie algebras with nilradical $\mathfrak{n}$ satisfying $\dim \mathfrak{n}^1 > 1$, namely the ones isomorphic to
\[
\frs{6.145}^{0}, \qquad
\frs{6.147}^{0}, \qquad
\frs{6.152},     \qquad
\frs{6.154}^0.
\]
Similarly to the previous case, if $J$ is a complex structure on one of them, we can refer to Propositions \ref{cpx-145_147} and \ref{cpx-152_154} for the structure equations of a special basis $\{\alpha^1,\alpha^2,\alpha^3\}$ of $(1,0)$-forms and consider the generic Hermitian metric \eqref{om}. For $\frs{6.145}^{0}$, \eqref{145_J1} yields
\[
\partial \overline\partial \omega \wedge \omega = -2 \lambda_1^2 \alpha^{1\overline{1}2\overline{2}3\overline{3}},
\]
while, on $\frs{6.147}^0$, we have
\begin{alignat*}{2}
\partial \overline\partial \omega \wedge \omega &= -2 (1+|z|^2) \alpha^{1\overline{1}2\overline{2}3\overline{3}}, &\quad &\text{if \eqref{147_J1} holds,} \\
\partial \overline\partial \omega \wedge \omega &= \left(-2 (\lambda_1 \Re(z) - 2 \Im(w_3))^2 - 2(\lambda_1 \Im(z)-2 \Re(w_3))^2-\lambda_1^2 \right) \alpha^{1\overline{1}2\overline{2}3\overline{3}}, &\quad &\text{if \eqref{147_J2} holds,} \\
\partial \overline\partial \omega \wedge \omega &= \left(-2 (x\lambda_1 -2 \Im(w_3))^2 - 8 \Re(w_3)^2 - \lambda_1^2\right) \alpha^{1\overline{1}2\overline{2}3\overline{3}}, &\quad &\text{if \eqref{147_J3} holds.}
\end{alignat*}
In the case of $\frs{6.152}$, one has
\begin{alignat*}{2}
\partial \overline\partial \omega \wedge \omega =& \left( -2(\lambda_1 \Re(z_1) + 2 \Im(w_3)\Re(z_2) + 2 \Im(w_2))^2 \right. & & \\
&\left.- 2(\lambda_1 \Im(z_1) + 2 \Re(w_3)\Re(z_2) + 2 \Re(w_2))^2 - \lambda_1^2 \right) \alpha^{1\overline{1}2\overline{2}3\overline{3}}, &\quad &\text{if \eqref{152_J1} holds,} \\
\partial \overline\partial \omega \wedge \omega =& -\frac{\lambda_1^2}{2 \Im(z_2)^4}\left( (\delta \Re(z_2)\Im(z_2) - |z_1|^2 )^2 + \Im(z_2)^2 (1+\Im(z_2)^2) \right) \alpha^{1\overline{1}2\overline{2}3\overline{3}}, &\quad &\text{if \eqref{152_J2} holds,}
\end{alignat*}
and \eqref{154_J} on $\frs{6.154}^0$ yields
\[
\partial \overline\partial \omega \wedge \omega = -\frac{\lambda_1^2}{2 x_1^4}\left( (x_1x_2 - |z|^2 )^2 + x_1^2 \right) \alpha^{1\overline{1}2\overline{2}3\overline{3}}.
\]
The non-vanishing of all these expressions concludes the proof of the theorem. 
\end{proof}

\section{SKT and LCSKT structures} \label{sec_SKT}

As described in the Introduction, the SKT condition for a Hermitian structure $(J,g)$ is characterized by the closure of $H \coloneqq d^c \omega$, which is the torsion $3$-form associated with the \emph{Bismut connection}. The locally conformally SKT (LCSKT) condition was recently introduced in \cite{DFFL} and can be characterized by the condition
$dH=\mu \wedge H,$ for some non-zero closed $1$-form $\mu$. 

In the Lie algebra setting, six-dimensional Lie algebras admitting SKT structures have been classified in the nilpotent case \cite{FPS}, the almost abelian case \cite{FP_gk}, while some partial results were obtained in the strongly unimodular almost nilpotent case when the nilradical is of Heisenberg type \cite{FP_alm}. Full characterizations in the almost abelian case and in the six-dimensional $2$-step solvable case have been proved in \cite{AL} and \cite{FS}, respectively.

Instead, \cite{DFFL} and \cite{BF} provide a full classification result for six-dimensional nilpotent and almost abelian Lie algebras admitting LCSKT structures.

The next theorem builds on some of these results to complete the classification of six-dimensional strongly unimodular almost nilpotent Lie algebras admitting SKT or LCSKT structures.

\begin{theorem} \label{SKT_LCSKT_class}
Let $\mathfrak{g}$ be a six-dimensional strongly unimodular almost nilpotent Lie algebra. Then, $\mathfrak{g}$ admits:
\begin{itemize} [leftmargin=0.5cm]
\item[{\normalfont (i)}] SKT structures if and only if it is isomorphic to one among
\begin{alignat*}{4}
&\frs{3.3}^0 \oplus \R^3,                          &\qquad &\mathfrak{h}_3 \oplus \mathfrak{s}_{3.3}^0,             &\qquad &\frs{4.3}^{-\frac{1}{2},-\frac{1}{2}} \oplus \R^2, &\qquad &\frs{4.5}^{p,-\frac{p}{2}} \oplus \R^2,\, {\scriptstyle p>0}, \\
&\frs{4.6} \oplus \R^2,                            &\qquad &\frs{4.7}\oplus \R^2,                                   &\qquad &\frs{5.4}^0 \oplus \R,                             &\qquad &\frs{5.13}^{0,0,r} \oplus \R,\, {\scriptstyle r>0}, \\
&\frs{6.19}^{p,p,-2p,0},\, {\scriptstyle p \neq 0}, &\qquad &\frs{6.21}^{p,0,r,-2p},\, {\scriptstyle p \neq 0,\, r >0},&\qquad &\frs{6.25},                                        &\qquad &\frs{6.51}^{p,0},\, {\scriptstyle p>0}, \\
&\frs{6.52}^{0,q},\, {\scriptstyle q>0},            &\qquad &\frs{6.158},                                            &\qquad &\frs{6.164}^p,\, {\scriptstyle p>0},
\end{alignat*}
\item[{\normalfont (ii)}] LCSKT structures if and only if it is isomorphic to one among
\begin{alignat*}{4}
&\frs{3.3}^0 \oplus \R^3, &\qquad &\mathfrak{h}_3 \oplus \mathfrak{s}_{3.3}^0, &\qquad &\frs{5.4}^0 \oplus \R, &\qquad &\frs{5.13}^{0,0,r} \oplus \R,\, {\scriptstyle r>0}, \\
&\frs{6.17}^{1,1,1,-4}, &\qquad &\frs{6.19}^{p,p,-4p,p},\,{\scriptstyle p \neq 0},  &\qquad &\frs{6.21}^{p,p,r,-4p},\, {\scriptstyle p \neq 0, \,r>0}, &\qquad &\frs{6.154}^0.
\end{alignat*}
\end{itemize}
In particular, if a six-dimensional strongly unimodular almost nilpotent Lie algebra admits SKT structures, then it is either almost abelian or its nilradical has one-dimensional commutator.
\end{theorem}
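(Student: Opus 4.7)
The overall strategy is to reduce the problem to finitely many explicit computations by combining the previous classification in Theorem \ref{1stG_class} with the structural descriptions of complex structures obtained in Propositions \ref{cpx-145_147}, \ref{cpx-152_154}, \ref{heis_cpx} and \ref{almab_cpx}. Since the SKT condition implies the 1\textsuperscript{st}-Gauduchon condition, only the Lie algebras listed in Theorem \ref{1stG_class} are candidates for admitting SKT structures; for the LCSKT case, one must work with the full list of Theorem \ref{cpx-class}, but the analysis is organized analogously. I would split the argument into three cases according to the dimension of $\mathfrak{n}^1$.

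In the almost abelian case ($\dim \mathfrak{n}^1 = 0$), the SKT classification is already known from \cite{FP_gk} (refined in \cite{AL}) and the LCSKT classification from \cite{BF}, so it only remains to translate the naming conventions to match the present list. For the Heisenberg-type nilradical case ($\dim \mathfrak{n}^1=1$), the SKT classification when $J\mathfrak{n}^1 \subset \mathfrak{n}$ has been treated in \cite{FP_alm}; the complementary case $J\mathfrak{n}^1 \not\subset \mathfrak{n}$ is handled by Proposition \ref{heis_cpx}, which produces, for every complex structure, a basis of $(1,0)$-forms with explicit structure equations (Table \ref{table-cpxheis}). Plugging the generic Hermitian form \eqref{om} into these, one computes $H=d^c\omega$ and then $dH$ symbolically, and sets either $dH=0$ (SKT) or $dH = \mu \wedge H$ for a closed non-zero $\mu$ (LCSKT); the resulting polynomial system in $\lambda_j,w_j$ is small enough to be solved by inspection, ruling out the Lie algebras absent from the list and producing explicit examples (Tables \ref{table-exab} and \ref{table-exheis}) for those that remain.

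The genuinely new content is the case $\dim \mathfrak{n}^1 \geq 2$, where by Theorem \ref{cpx-class} only the four Lie algebras $\frs{6.145}^0, \frs{6.147}^0, \frs{6.152}, \frs{6.154}^0$ can admit a complex structure. For each of these, Propositions \ref{cpx-145_147} and \ref{cpx-152_154} reduce the complex structure to one of the finitely many normal forms \eqref{145_J1}--\eqref{147_J3} and \eqref{152_J1}--\eqref{154_J}, each depending on a small number of continuous parameters. I would then substitute these structure equations together with \eqref{om} and compute $\partial\omega$, $H = i(\bar\partial - \partial)\omega$, and $dH$. For the SKT condition, examining the coefficient of a suitable $(2,2)$-form in $dH$ should produce an identity whose real part is a sum of squares plus a positive constant term (controlled by the positivity of $\lambda_1$), thus forbidding $dH=0$ on all four of these algebras; this is strongly suggested by the analogous sign computation already carried out in the proof of Theorem \ref{1stG_class}, where $\partial\bar\partial\omega \wedge \omega$ turned out to be negative definite for three of the four algebras. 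For the LCSKT case one writes $\mu = \mu_0 \alpha^3 + \overline{\mu_0}\,\alpha^{\overline 3}$ (since $\alpha^3$ is the only closed $(1,0)$-form in each normal form), expands $dH - \mu \wedge H$, and matches coefficients; this should eliminate $\frs{6.145}^0$, $\frs{6.147}^0$ and $\frs{6.152}$, while on $\frs{6.154}^0$ yielding a nontrivial family of solutions producing the LCSKT examples recorded in the statement.

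The main obstacle is the LCSKT analysis on $\frs{6.152}$ and $\frs{6.154}^0$, where the two-parameter complex structures \eqref{152_J2} and \eqref{154_J} produce lengthy $(2,2)$-form coefficients: one must carefully exploit the positivity constraints on $(\lambda_1,\lambda_2,\lambda_3,w_1,w_2,w_3)$ and the fact that $\mu$ must be both non-zero and closed in order to separate the surviving solutions on $\frs{6.154}^0$ from the empty locus on $\frs{6.152}$. The final clause of the theorem then follows immediately from inspection of the resulting SKT list, since none of the three Lie algebras with $\dim \mathfrak{n}^1 \geq 2$ admitting a complex structure survives the SKT computation.
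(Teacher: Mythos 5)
Your skeleton coincides with the paper's in the almost abelian case (quoting \cite{FP_gk}, \cite{AL}, \cite{BF}), for the four algebras with $\dim\mathfrak{n}^1\geq 2$, and for complex structures with $J\mathfrak{n}^1\not\subset\mathfrak{n}$ on Heisenberg-type nilradicals, where computing $dd^c\omega-\mu\wedge d^c\omega$ on the normal forms of Propositions \ref{cpx-145_147}, \ref{cpx-152_154}, \ref{heis_cpx} is exactly what the paper does. However, there is a genuine gap in part (ii). Proposition \ref{heis_cpx} only gives normal forms for complex structures with $J\mathfrak{n}^1\not\subset\mathfrak{n}$, and \cite{FP_alm} classifies SKT (and balanced) structures, not LCSKT ones, in the case $J\mathfrak{n}^1\subset\mathfrak{n}$; so your proposal contains no argument excluding LCSKT structures for complex structures with $J\mathfrak{n}^1\subset\mathfrak{n}$ on $\frs{4.6}\oplus\R^2$, $\frs{4.7}\oplus\R^2$, $\frs{5.16}\oplus\R$, $\frs{6.25}$, $\frs{6.51}^{p,0}$, $\frs{6.158}$, $\frs{6.164}^p$ --- precisely the algebras that do admit SKT (or, for $\frs{5.16}\oplus\R$, other Hermitian) structures but must be shown to admit no LCSKT structure, and for most of which \emph{all} complex structures satisfy $J\mathfrak{n}^1\subset\mathfrak{n}$, so the case cannot be sidestepped. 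The paper fills this with a dedicated argument: using the adapted orthonormal basis of \cite{FP_alm} it puts $\operatorname{ad}_{e_6}\rvert_{\mathfrak{n}}$ and $de^1=\eta$ in the form \eqref{sub_adapted} subject to \eqref{sub_adapted_condition} (after first showing $m_1+m_2=0$), parametrizes the generic closed $1$-form $\mu$, and evaluates $\xi=dd^c\omega-\mu\wedge d^c\omega$ on suitable quadruples of basis vectors, reaching contradictions (either $\mu$ is forced to vanish or $\eta=0$) in the two families of cases $m=0$, $q\neq 0$ and $m\neq 0$. Without a substitute for this analysis, part (ii) is not proved.

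Two smaller points. First, for the four algebras with $\dim\mathfrak{n}^1\geq 2$ you do not need a new sum-of-squares computation to exclude SKT: by Theorem \ref{1stG_class} none of $\frs{6.145}^0$, $\frs{6.147}^0$, $\frs{6.152}$, $\frs{6.154}^0$ admits 1\textsuperscript{st}-Gauduchon structures (your ``three of the four'' is an undercount), and the paper indeed disposes of the SKT question for $\frs{6.154}^0$ exactly this way. Second, where your plan treats SKT and LCSKT as separate computations, the paper streamlines the non-existence part by imposing the single ``twisted SKT'' condition $dd^c\omega=\mu\wedge d^c\omega$ with closed $\mu$ \emph{allowed to vanish}, which eliminates both structures at once on the algebras admitting neither; this is a convenience rather than a necessity, but it is worth adopting if you carry out the computations.
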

\begin{proof}
The almost abelian case was treated in \cite{FP_gk} and \cite{BF}.
For the non-almost abelian case, we start by proving that the Lie algebras not appearing in the statement do not admit \emph{twisted SKT} structures, namely Hermitian structures $(J,g)$ satisfying $dd^c \omega = \mu \wedge d^c \omega$, with the $1$-form $\mu$ being closed and being allowed to vanish: these structures encompass SKT structures and LCSKT structures as special cases ($\mu=0$ and $\mu \neq 0$, respectively). The Lie algebras we need to work with are the following:
\begin{alignat*}{5}
&\frs{5.16} \oplus \R, &\qquad
&\frs{6.44},           &\qquad
&\frs{6.145}^{0},      &\qquad
&\frs{6.147}^{0},      &\qquad
&\frs{6.152}, \\
&\frs{6.159},          &\qquad
&\frs{6.162}^{1},      &\qquad
&\frs{6.165}^p,\,{\scriptstyle p>0}, &\qquad
&\frs{6.166}^p,\,{\scriptstyle 0<|p|\leq 1}, &\qquad
&\frs{6.167}.
\end{alignat*}
By \cite{FP_alm}, it is already known that the Lie algebra $\frs{5.16} \oplus \R$ does not admits SKT structures, as all its complex structures satisfy $J\mathfrak{n}^1 \subset \mathfrak{n}$, $\mathfrak{n}$ being the nilradical of $\frs{5.16} \oplus \R$: hence, it only remains to prove that this Lie algebra does not admit LCSKT structures, and we shall treat it together with the Lie algebras $\frs{4.7} \oplus \R^2$ and $\frs{6.25}$ later in the proof. For the remaining Lie algebras, instead, we can exploit Propositions \ref{cpx-145_147}, \ref{cpx-152_154} and \ref{heis_cpx}, consider the complex structures defined by the structure equations in Table \ref{table-cpxheis}, exhibit the generic real closed $1$-form $\mu$, consider the generic Hermitian metric \eqref{om} and compute $dd^c \omega - \mu \wedge d^c \omega$, showing that it cannot vanish. The computations proving the claim are summarized in Table \ref{SKT_LCSKT_comp}.

\begin{table}[H]
\begin{center}
\addtolength{\leftskip} {-2cm}
\addtolength{\rightskip}{-2cm}
\scalebox{0.8}{
\begin{tabular}{|l|l|l|}
\hline \xrowht{15pt}
Lie algebra & Generic real closed $1$-form $\mu$ & $dd^c \omega  - \mu \wedge d^c \omega= \frac{1}{4} \xi_{jk\overline{l}\overline{m}} \alpha^{jk\overline{l}\overline{m}} + \xi_{123\overline{j}} \alpha^{123\overline{j}} + \xi_{j\overline{1}\overline{2}\overline{3}} \alpha^{j\overline{1}\overline{2}\overline{3}}$\\
\hline 
\hline \xrowht{25pt}
$\frs{6.44}$ & $iy(\alpha^1-\alpha^{\overline{1}})$ & \hspace{-0.8em} $\begin{array}{l} \xi_{12\overline{1}\overline{3}}=2y \lambda_2 \; \implies \, y=0, \\ \xi_{13\overline{1}\overline{3}}=4 \lambda_2 \end{array}$ \\
\hline \xrowht{20pt}
$\frs{6.145}^0$ & $\zeta \alpha^3 + \overline{\zeta} \alpha^{\overline{3}}$ & $\xi_{23\overline{2}\overline{3}}-2\Im(w_3)\xi_{13\overline{2}\overline{3}}=4\lambda_1$ \\
\hline \xrowht{20pt}
\multirow{3}{1em}{\parbox{1\linewidth}{\vspace{3.5em}\hbox{$\frs{6.147}^0$}}} & $\zeta \alpha^3 + \overline{\zeta} \alpha^{\overline{3}}$ & $\lambda_1 \xi_{23\overline{2}\overline{3}} -2\Im(w_3 \xi_{13\overline{2}\overline{3}})=4(1+|z|^2)\lambda_1^2$ \\
\cline{2-3} \xrowht{35pt}
 & $\zeta \alpha^3 + \overline{\zeta} \alpha^{\overline{3}}$ & \hspace{-0.8em} $\begin{array}{l} \xi_{123\overline{3}}=\lambda_1 \zeta \; \implies \, \zeta=0, \\ \xi_{13\overline{2}\overline{3}}=4 \lambda_1 \overline{z} + 8i w_3 \; \implies \, w_3=\frac{i}{2} \lambda_1 \overline{z}, \\ \xi_{23\overline{2}\overline{3}}=2\lambda_1 \end{array}$ \\
\cline{2-3} \xrowht{20pt}
 & $\zeta \alpha^3 + \overline{\zeta} \alpha^{\overline{3}}$ & \hspace{-0.8em} $\begin{array}{l} \xi_{123\overline{3}}=\lambda_1 \zeta \; \implies \, \zeta=0, \\ \xi_{23\overline{2}\overline{3}}-x \Re(\xi_{13\overline{2}\overline{3}})=2\lambda_1 \end{array}$ \\
\hline \xrowht{35pt}
\multirow{2}{1em}{\vspace{-1.2em}\parbox{1\linewidth}{\hbox{$\frs{6.152}$}}} & $iy(\alpha^2-\alpha^{\overline{2}})-iy\Re(z_2)(\alpha^3-\alpha^{\overline{3}})$ & \hspace{-0.8em} $\begin{array}{l} \xi_{123\overline{2}}=-\delta \lambda_1 y \; \implies \, y=0 \\ \xi_{12\overline{2}\overline{3}}=4\lambda_1 \overline{z}_1 - 8i\Re(z_2)w_3-8iw_2 \; \implies \, w_2=-\frac{i}{2} \lambda_1 \overline{z}_1 - \Re(z_2)w_3 \\ \xi_{23\overline{2}\overline{3}}=2\lambda_1 \end{array}$  \\
\cline{2-3} \xrowht{20pt} & $iy(\alpha^2-\alpha^{\overline{2}})-iy\Re(z_1)(\alpha^3-\alpha^{\overline{3}})$ &  \hspace{-0.8em} $\begin{array}{l} \Im(\xi_{12\overline{2}\overline{3}})=-\delta \lambda_1 y \; \implies \, y=0 \\ \xi_{23\overline{2}\overline{3}}=\frac{\lambda_1}{\Im(z_2)^4} \left( (\delta \Re(z_2)\Im(z_2)-|z_1|^2)^2+\Im(z_2)^2(1+\Im(z_2)^2) \right) \end{array}$ \\
\hline \xrowht{35pt}
$\frs{6.159}$ & $iy(\alpha^1-\alpha^{\overline{1}})+\zeta \alpha^3 + \overline{\zeta} \alpha^{\overline{3}}$ & \hspace{-0.8em} $\begin{array}{l} \xi_{12\overline{1}\overline{2}}=2\delta \lambda_1 y \; \implies \, y=0, \\ \xi_{12\overline{2}\overline{3}}=-i\delta \lambda_1 \overline{\zeta} \; \implies \, \zeta=0, \\ \xi_{23\overline{2}\overline{3}}=-4\delta \varepsilon \lambda_1 \end{array}$\\
\hline \xrowht{20pt}
$\frs{6.162}^1$ & $iy(\alpha^1-\alpha^{\overline{1}})$ & $\xi_{23\overline{2}\overline{3}}=4\lambda_1$ \\
\hline \xrowht{20pt}
$\frs{6.165}^p$, $p>0$ & $iy(\alpha^1-\alpha^{\overline{1}})$ & $\xi_{23\overline{2}\overline{3}}=4\lambda_1$ \\
\hline \xrowht{20pt}
$\frs{6.166}^p$, $0<|p|\leq 1$ & $iy(\alpha^1-\alpha^{\overline{1}})$ & $\xi_{23\overline{2}\overline{3}}=-4 \varepsilon \lambda_1$ \\
\hline \xrowht{20pt}
$\frs{6.167}^1$ & $iy(\alpha^1-\alpha^{\overline{1}})$ & $\xi_{23\overline{2}\overline{3}}=4\lambda_1$ \\
\hline
\end{tabular}}
\caption{Six-dimensional strongly unimodular non-almost abelian almost nilpotent Lie algebras admitting neither SKT nor LCSKT structures (except $\frs{5.16} \oplus\R$).} \label{SKT_LCSKT_comp}
\end{center}
\end{table}

Comparing the two lists in the statement, it is easy to prove that $\frs{6.154}^0$ does not admit SKT structures, since, by Theorem \ref{1stG_class}, it does not admit 1\textsuperscript{st}-Gauduchon structures.

It remains to prove that the following Lie algebras (which admit SKT structures, with the exception of $\frs{5.16} \oplus \R$) do not admit LCSKT structures:
\[
\frs{4.6} \oplus \R^2, \quad
\frs{4.7}\oplus \R^2,\quad
\frs{5.16} \oplus \R, \quad
\frs{6.25}, \quad
\frs{6.51}^{p,0},\,{\scriptstyle p>0}, \quad
\frs{6.52}^{0,q},\,{\scriptstyle q>0}, \quad
\frs{6.158}, \quad
\frs{6.164}^p,\,{\scriptstyle p >0}.
\]
All these Lie algebras have nilradical with one-dimensional commutator. First, we work with complex structures satisfying $J\mathfrak{n}^1 \subset \mathfrak{n}$, meaning we are ruling out $\frs{6.52}^{0,q}$, by \cite{FP_alm}. Assume $(\mathfrak{g},J,g)$ is an LCSKT Lie algebra, with $\mathfrak{g}$ isomorphic to one of the previous Lie algebras and $J$ satisfying $J\mathfrak{n}^1 \subset \mathfrak{n}$. By \cite{FP_alm}, $\mathfrak{g}$ admits an orthonormal basis $\{e_1,\ldots,e_6\}$, with $\mathfrak{n}=\R\left<e_1,\ldots,e_5\right>$, $\mathfrak{n}_1=\R\left<e_1\right>$ and $Je_1=e_2$, $Je_3=e_4$, $Je_5=e_6$. With respect to this basis, one has
\[
\begin{gathered}
\text{ad}_{e_6}\rvert_{\mathfrak{n}}=\left( \begin{array}{c|c|c|c}
		0 & 0 & \gamma_1 & v_1 \\ \hline
		0 & m_1 & J\gamma_1 + \gamma_2 & v_2 \\ \hline
		0 & 0 & \begin{matrix} -\tfrac{1}{2}(m_1+m_2) & q \\ -q & -\tfrac{1}{2}(m_1+m_2) \end{matrix} & v \\ \hline
		0 & 0 & 0 & m_2
	\end{array} \right), \\ de^1=\eta=c\,e^{34} + \gamma_2 \wedge e^{5} + m_1 \, e^{25},
\end{gathered}
\]
with
\[
m_1,m_2,q,c,v_1,v_2 \in \R, \quad v \in \mathfrak{k}, \quad \gamma_1,\gamma_2 \in \mathfrak{k}^*,
\]
with $\mathfrak{k} \coloneqq \R \left<e_3,e_4\right>$, satisfying the following conditions imposed by the Jacobi identity:
\begin{align}
&m_1(m_1+m_2)=0, \label{sub1} \\
&c(m_1+m_2)=0, \label{sub2} \\
&(m_1+m_2)\gamma_2 + m_1 J\gamma_1 + A^*\gamma_2 -cJv^\flat =0, \label{sub3}
\end{align}
where
\[
A=\begin{pmatrix} -\tfrac{1}{2}(m_1+m_2) & q \\ -q & -\tfrac{1}{2}(m_1+m_2) \end{pmatrix}.
\]
and $(\cdot)^\flat$ is the musical isomorphism between $\mathfrak{g}$ and $\mathfrak{g}^*$ induced by the metric $g$. Actually, one can prove that $m_1+m_2=0$ holds: if this were not case, \eqref{sub1} and \eqref{sub2} would imply $m_1=c=0$, so that \eqref{sub3} reads $A^* \gamma_2=0$, which cannot occur, since the degeneracy of $A$ would imply $m_2=q=0$. In the end, one obtains 
\begin{equation} \label{sub_adapted}
\text{ad}_{6}\rvert_{\mathfrak{n}}=\left( \begin{array}{c|c|c|c}
		0 & 0 & \gamma_1 & v_1 \\ \hline
		0 & m & J\gamma_1 + \gamma_2 & v_2 \\ \hline
		0 & 0 & \begin{matrix} 0 & q \\ -q & 0 \end{matrix} & v \\ \hline
		0 & 0 & 0 & -m
	\end{array} \right), \quad de^1=\eta=c\,e^{34} + \gamma_2 \wedge e^{5} + m \, e^{25},
\end{equation}
with
\begin{equation} \label{sub_adapted_condition}
m \gamma_1 + q \gamma_2 - cv^\flat=0,
\end{equation}
exploiting $A=-qJ\rvert_{\mathfrak{k}}$.
Following the techniques in \cite{FP_alm}, imposing $\mathfrak{g}$ to be isomorphic to a specific Lie algebra induces further conditions on the parameters involved. We do this one Lie algebra at a time, proving that the vanishing of
\[
\xi \coloneqq dd^c \omega - \mu \wedge d^c \omega,
\]
with $\mu$ a generic non-zero closed $1$-form, produces a contradiction:
\begin{itemize} [leftmargin=0.5cm]
\item $\frs{4.7} \oplus \R^2$, $\frs{5.16} \oplus \R$, $\frs{6.25}$: necessarily, $m=0$, $q \neq 0$. Now, the generic closed $1$-form is
\begin{equation} \label{mu}
\mu=y_1e^1+y_2e^2+\mu_{\mathfrak{k}} + y_5e^5 + y_6e^6, \quad y_1,y_2,y_5,y_6 \in \R,\; \mu_{\mathfrak{k}} \in \mathfrak{k}^*,
\end{equation}
with
\begin{equation} \label{4.7_LCSKT_closed}
cy_1=0, \quad y_1 \nu =0, \quad y_1v_1+y_2v_2 + \mu_{\mathfrak{k}}(v)=0, \quad
\mu_{\mathfrak{k}}=\frac{1}{q} \left( y_1 J\gamma_1 - y_2 \gamma_1 + y_2 J\gamma_2 \right).
\end{equation}
For a generic $X \in \mathfrak{k}$, a computation yields
\begin{align}
\label{4.7_LCSKT_1} \xi(e_1,e_2,e_3,e_4)&=cy_2, \\
\label{4.7_LCSKT_2} \xi(e_1,e_2,X,e_5)&=-y_1(\gamma_1-J\gamma_2)(X) - y_2 J\gamma_1(X), \\
\label{4.7_LCSKT_3} \xi(e_1,e_2,X,e_6)&=y_2 J\gamma_2(X), \\
\label{4.7_LCSKT_4} \xi(e_2,X,e_5,e_6)&=q(J\gamma_1+\gamma_2)(X) + y_6(\gamma_1-J\gamma_2)(X) - y_2 g(v,X) + v_2 \mu_{\mathfrak{k}}(X).
\end{align}
First, we notice that $y_1 = y_2= 0$ must hold, otherwise \eqref{4.7_LCSKT_closed}, \eqref{4.7_LCSKT_1}, \eqref{4.7_LCSKT_2} and \eqref{4.7_LCSKT_3} would imply $c=0$, $\gamma_2=0$, annihilating $\eta=de^1$, a contradiction.
Then, \eqref{4.7_LCSKT_closed} forces $\mu_{\mathfrak{k}}=0$, so that \eqref{4.7_LCSKT_4} now implies $\gamma_2=-J\gamma_1$. We now compute
\[
\xi(e_1,e_3,e_4,e_5)=cy_5, \quad \xi(e_1,e_3,e_4,e_6)=cy_6
\]
and, since we want $\mu=y_5e^5+y_6e^6$ not to vanish, we must set $c=0$. Now,
\[
\xi(e_3,e_4,e_5,e_6)=-2\vert \gamma_1 \rvert^2
\]
implies $\gamma_1=0$, in turn forcing $\eta=0$;
\item $\frs{4.6} \oplus \R^2$, $\frs{6.51}^{p,0}$, $\frs{6.158}$, $\frs{6.164}^p$: we need to require $m \neq 0$. We write the generic $1$-form \eqref{mu} and compute
\[
d\mu(e_2,e_5)=y_1m, \quad d\mu(e_2,e_6)=y_2m,
\]
implying $y_1=y_2=0$. Now, a computations shows
\begin{align*}
\xi(e_1,e_2,X,e_5)&=m \mu_{\mathfrak{k}} (X), \quad X \in \mathfrak{k}, \\
\xi(e_1,e_2,e_5,e_6)&=-my_6,
\end{align*}
so that we must set $\mu_{\mathfrak{k}}=0$ and $y_6=0$. Now, $\mu$ is reduced to $\mu=y_5e^5$ and it cannot be closed unless it vanishes, since $d\mu=-my_5e^{56}$.
\end{itemize}

To end the proof, we need to study the Lie algebras $\frs{4.7} \oplus \R^2$ and $\frs{6.52}^{0,q}$ in the case of a complex structure $J$ satisfying $J\mathfrak{n}^1 \not\subset \mathfrak{n}$. We proceed exactly as in the case of Lie algebras admitting neither SKT nor LCSKT structures, this time assuming $\mu \neq 0$. The computations proving the claim are summarized in Table \ref{SKT_LCSKT_comp2}.\end{proof}

\begin{table}[H]
\begin{center}
\addtolength{\leftskip} {-2cm}
\addtolength{\rightskip}{-2cm}
\scalebox{0.85}{
\begin{tabular}{|l|l|l|}
\hline \xrowht{15pt}
Lie algebra & Generic real closed $1$-form $\mu$ & $dd^c \omega  - \mu \wedge d^c \omega= \frac{1}{4} \xi_{jk\overline{l}\overline{m}} \alpha^{jk\overline{l}\overline{m}} + \xi_{123\overline{j}} \alpha^{123\overline{j}} + \xi_{j\overline{1}\overline{2}\overline{3}} \alpha^{j\overline{1}\overline{2}\overline{3}}$\\
\hline 
\hline \xrowht{25pt}
$\frs{4.7} \oplus \R^2$ & $iy(\alpha^1-\alpha^{\overline{1}}) + \zeta \alpha^3 + \overline{\zeta} \alpha^{\overline{3}}$ & \hspace{-0.8em} $\begin{array}{l} \xi_{12\overline{1}\overline{2}}=2\varepsilon y \lambda_1 \; \implies \, y=0, \\ \xi_{23\overline{1}\overline{2}}=i\varepsilon \zeta \lambda_1 \end{array}$ \\
\hline
\xrowht{20pt}
$\frs{6.52}^{0,q}$, $q>0$ & $iy(\alpha^1-\alpha^{\overline{1}})$ & $\xi_{12\overline{1}\overline{2}}=2\delta y \lambda_1$  \\
\hline
\end{tabular}}
\caption{Six-dimensional strongly unimodular non-almost abelian almost nilpotent Lie algebras admitting SKT structures but no LCSKT structures (excluding those with nilradical having one-dimensional commutator and only admitting complex structures satisfying $J\mathfrak{n}^1 \subset \mathfrak{n}$).} \label{SKT_LCSKT_comp2}
\end{center}
\end{table}

\begin{remark}
We note that we have not obtained any new isomorphism classes of Lie algebras admitting SKT structures with respect to the ones already known by the classification results in \cite{FP_gk, FP_alm}. Instead, $\mathfrak{h}_3 \oplus \frs{3.3}^0$ and $\frs{6.154}^0$ provide new examples of Lie algebras admitting LCSKT structures, with the latter being of particular interest, as it does cannot carry SKT structures. The simply connected Lie groups associated with these two Lie algebras admit cocompact lattices (see \cite[Remark 4.8]{FP_alm}, for example, and Remark \ref{rem_lattices}), yielding new examples of compact LCSKT manifolds. Moreover, in the case of $\frs{6.154}^0$, such examples can have non-degenerate torsion form $H=d^c\omega$, in the sense that the contraction $\iota_X H$ of $H$ by any non-zero tangent vector $X$ yields a non-zero $2$-form: for example, one can consider the one induced by the example of LCSKT structure on $\frs{6.154}^0$ from Table \ref{table-exnew}, having torsion form
\[
H=f^{146}-f^{256}-f^{345}.
\]
The claim follows from noticing that the $2$-forms $\iota_{f_i}H$, $i=1,\ldots,6$, are all linearly independent.
\end{remark}

\section{Existence of tamed complex structures} \label{sec_tamed}

In this  section we provide a negative result regarding the existence of symplectic forms taming complex structures on almost nilpotent Lie algebras. In the almost abelian, we can drop both the six-dimensional and the unimodular hypothesis and prove a general result. Notice that the result has already been proven in \cite{FKV} under the hypothesis that the Lie algebra is not of type I, namely, that some of the eigenvalues of the adjoint action of a non-nilpotent element on the nilradical are not imaginary.

\begin{theorem} \label{tamed_almab}
Let $\mathfrak{g}$ be a $2n$-dimensional almost abelian Lie algebra endowed with a complex structure $J$. Then, there do not exist symplectic structures $\Omega$  on $\mathfrak{g}$ taming $J$, unless $(\mathfrak{g},J)$ admits K\"ahler metrics.
\end{theorem}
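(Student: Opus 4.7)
The plan is to convert the taming hypothesis into a linear problem by means of the equivalence of \cite{EFV}: a symplectic form $\Omega$ tames $J$ if and only if there exist a Hermitian metric $g$ and a $\partial$-closed $(2,0)$-form $\beta$ such that $\partial\omega = \bar\partial\beta$, where $\omega$ is the fundamental form of $g$. I grant such $(g,\beta)$ and work in the adapted basis described before Proposition \ref{almab_cpx}. With the induced $(1,0)$-coframe $\{\alpha^1,\dots,\alpha^n\}$, where $\alpha^1 = \tfrac12(e^1 + ie^{2n})$ is normal to $\mathfrak k$, the bracket data $\mathrm{ad}_{e_{2n}}|_{\mathfrak n} = \begin{pmatrix} a & 0 \\ v & A \end{pmatrix}$ yield $d\alpha^1 = ia\,\alpha^{1\bar 1}$ and, for $j \geq 2$,
\begin{equation*}
d\alpha^j = iu_j\,\alpha^{1\bar 1} + i\sum_{l \geq 2} \tilde A^j_l\,\alpha^{1l} - i\sum_{l \geq 2} \tilde A^j_l\,\bar\alpha^1\wedge\alpha^l,
\end{equation*}
where $u_j \in \C$ encodes $v$ and $\tilde A \in \mathfrak{gl}(\mathfrak{k}^{(1,0)})$ is the complexification of $A|_{\mathfrak k}$. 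The crucial structural remark is that $\bar\alpha^1$ is the only $(0,1)$-factor appearing in any $d\alpha^j$, so $\bar\partial\beta$ is automatically divisible by $\bar\alpha^1$; in parallel, $\partial\omega$ is divisible by $\alpha^1$ because $d(\alpha^{1\bar 1}) = 0$.

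Decomposing $\omega = i\lambda_1\alpha^{1\bar 1} + \omega_{\mathfrak k}$ and $\beta = \alpha^1\wedge\gamma + \delta$ with $\gamma \in (\mathfrak{k}^{(1,0)})^*$ and $\delta \in \Lambda^{2,0}\mathfrak{k}^*$, a direct computation gives $\partial\omega = i\alpha^1 \wedge \phi$, where $\phi$ decomposes into a $\Lambda^{1,1}\mathfrak k$-part $\sum B_{jk}\,\alpha^{j\bar k}$ measuring how far $\tilde A$ is from being $g$-skew-Hermitian, and a $\bar\alpha^1$-part governed by the $g$-pairing with $v^{1,0}$. Matching $\partial\omega = \bar\partial\beta$ component by component, those terms of $\partial\omega$ containing $\alpha^1$ but no $\bar\alpha^1$ cannot be matched by the right-hand side and must vanish, forcing $B = 0$. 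Equivalently, $\tilde A$ is skew-Hermitian with respect to $g|_{\mathfrak k}$, hence semisimple with purely imaginary spectrum; this alone rules out all Lie algebras outside type I and thereby recovers the main result of \cite{FKV} as a byproduct.

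In the remaining type I case, choose a $g$-orthonormal basis in which $\tilde A = \mathrm{diag}(i\mu_2,\dots,i\mu_n)$ with $\mu_l \in \R$. The remaining matching, together with $\partial\beta = 0$, reduces to the linear system
\begin{equation*}
\gamma_l(\mu_l - ia) + i(\delta u)_l = -\overline{u_l}, \qquad \delta_{jk}(\mu_j + \mu_k) = 0,
\end{equation*}
for $l,j,k \in \{2,\dots,n\}$ and $j<k$. If $a \neq 0$ then $\mu_l - ia \neq 0$ for every $l$ and $\tilde A - a\,\mathrm{Id}$ is invertible, so the basis change $e_1 \mapsto e_1 + c$ with $c = -(\tilde A - a\,\mathrm{Id})^{-1}v$ annihilates $v$; combined with the skew-Hermitian structure of $\tilde A$ this gives a Kähler metric on $(\mathfrak g,J)$. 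If $a = 0$, the equations indexed by $l \in \ker\tilde A$ read $(\delta u)_l = i\overline{u_l}$; contracting with $u_l$ and using the skew-symmetry of $\delta$ yields $0 = i\sum_{\mu_l = 0}|u_l|^2$, so $v \in \mathrm{image}(\tilde A)$, and an analogous basis change $e_1\mapsto e_1+c$ with $\tilde A c = -v$ eliminates $v$ and again produces a Kähler metric. The main technical obstacle is the careful bidegree bookkeeping in arbitrary dimension together with the identification of the subtle skew-symmetric constraint on $\ker\tilde A$ when $a=0$; once the structural observation on $\bar\partial$-images is in place, the remaining work is routine linear algebra.
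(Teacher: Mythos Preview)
Your proof is correct and follows essentially the same strategy as the paper's: both invoke the \cite{EFV} characterization, use the bidegree observation that $\partial\omega$ lives in $\alpha^1\wedge\Lambda^{1,1}$ while $\bar\partial\beta$ lives in $\bar\alpha^1\wedge\Lambda^{2,0}$ to force $A$ to be $g$-skew-Hermitian on $\mathfrak{k}$, then treat the cases $a\neq 0$ and $a=0$ by a basis shift $e_1\mapsto e_1+c$ once $v\in\operatorname{Im}(A-a\,\mathrm{Id})$ is established. The only cosmetic differences are that the paper first places $A$ in the SKT normal form of \cite{AL} (which you bypass), and in the $a=0$ step the paper reads off $\beta(Z_k,Z_l)=0$ for $Z_k\in(\ker A)^\perp$, $Z_l\in\ker A$ from the matching $\bar\partial\beta=\partial\omega$, whereas you extract the equivalent constraint $\delta_{jk}(\mu_j+\mu_k)=0$ from $\partial\beta=0$; note that your final contraction step tacitly uses this second equation to ensure the sum $\sum_{\mu_l=0}u_l(\delta u)_l$ ranges only over indices in $\ker\tilde A$, which is what makes the skew-symmetry argument go through.
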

\begin{proof}
Using the characterization we have just recalled, we assume $\mathfrak{g}$ admits an SKT structure $(J,g)$ such that $\partial \omega = \overline\partial \beta$ for some $\partial$-closed $(2,0)$-form $\beta$.

Following \cite{FP_gk}, we now fix a codimension-one abelian ideal and observe that $J\mathfrak{n}^{\perp_g} \subset \mathfrak{n}$ since $g$ is $J$-Hermitian. We also denote by $\mathfrak{k} \coloneqq \mathfrak{n} \cap J\mathfrak{n}=(\mathfrak{n}^{\perp_g} \oplus J\mathfrak{n}^{\perp_g})^{\perp_g}$ the maximal $J$-invariant subspace of $\mathfrak{n}$.

By \cite{AL}, one can then find an orthonormal basis $\{e_1,\ldots,e_{2n}\}$ of $\mathfrak{g}$ adapted to the splitting $\mathfrak{g}=J\mathfrak{n}^{\perp_g} \oplus \mathfrak{k} \oplus \mathfrak{n}^{\perp_g}$, such that $Je_1=e_{2n}$, $Je_{2k-1}=e_{2k}$, $k=1,\ldots,n-1$ and such that, with respect to the basis $\{e_1,\ldots,e_{2n-1}\}$ for $\mathfrak{n}$, the matrix associated with the endomorphism $\text{ad}_{e_{2n}}\rvert_{\mathfrak{n}}$ is of the form
\[
\text{ad}_{e_{2n}}\rvert_{\mathfrak{n}}=\begin{pmatrix} a & 0 \\ v & A \end{pmatrix},
\]
with $a \in \R$, $v \in \mathfrak{k}_1$ and $A$ of the form
\begin{equation} \label{A}
A=\text{diag}\left( \begin{pmatrix} -\frac{a}{2} & -b_1 \\ b_1 & -\frac{a}{2} \end{pmatrix}, \ldots, \begin{pmatrix} -\frac{a}{2} & -b_h \\ b_h & -\frac{a}{2} \end{pmatrix} , \begin{pmatrix} 0 & -b_{h+1} \\ b_{h+1} & 0 \end{pmatrix},\ldots,\begin{pmatrix} 0 & -b_{n-1} \\ b_{n-1} & 0 \end{pmatrix} \right),
\end{equation}
for some $b_k \in \R$, $k=1,\ldots,n-1$, $h \in \{0,\ldots,n-1\}$, $|b_1| \geq \ldots \geq |b_h|$, $|b_{h+1}| \geq \ldots \geq |b_{n-1}|$.

Now, denote by $\mathfrak{g}^{1,0},\mathfrak{g}^{0,1} \subset \mathfrak{g} \otimes \C$ the eigenspaces of $J$ with respect to the eigenvalues $\pm i$.
Bases for $\mathfrak{g}^{1,0}$ and $\mathfrak{g}^{0,1}$ are given, respectively, by $\{Z_1,\ldots,Z_n\}$ and $\{\overline{Z}_1,\ldots,\overline{Z}_n\}$, where 
\[
Z_1=\frac{1}{2}(e_1-ie_{2n}), \quad Z_k=\frac{1}{2}(e_{2k-1}-ie_{2k}),\;k=2,\ldots,n.
\]
Their respective dual bases for $(\mathfrak{g}^{1,0})^*$ and $(\mathfrak{g}^{0,1})^*$ are $\{\alpha^1,\ldots,\alpha^n\}$ and $\{\overline{\alpha}^1,\ldots,\overline{\alpha}^n\}$, with
\[
\alpha^1=e^1+ie^{2n}, \quad \alpha^k=e^{2k-1}+ie^{2k},\,k=2,\ldots,n.
\]

Recall that, by the integrability of $J$, the exterior differential $d$ splits into the sum $\partial + \overline\partial$, with
\[
\partial \colon \Lambda^{p,q}\mathfrak{g}^* \to \Lambda^{p+1,q}\mathfrak{g}^*,\quad \overline\partial \colon \Lambda^{p,q}\mathfrak{g}^* \to \Lambda^{p,q+1}\mathfrak{g}^*,
\]
for all $p,q=1,\ldots,n$, where $\Lambda^{p,q}\mathfrak{g}^* \coloneqq \Lambda^p(\mathfrak{g}^{1,0})^* \otimes \Lambda^q(\mathfrak{g}^{0,1})^* \subset \Lambda^{p+q}(\mathfrak{g}^* \otimes \C)$.

In the setup above, it is easy to see that $d \mathfrak{g}^* \subset \mathfrak{g}^* \wedge e^{2n}$, so that one has
\[
\partial (\Lambda^{1,1}\mathfrak{g}^*) \subset \Lambda^{1,1}\mathfrak{g}^* \wedge \alpha^1,\qquad \overline \partial (\Lambda^{2,0} \mathfrak{g}^*) \subset \Lambda^{2,0} \mathfrak{g}^* \wedge \overline{\alpha}^1.
\]
Therefore, since, by hypothesis, $\partial \omega \in \partial (\Lambda^{1,1}\mathfrak{g}^*) \cap \overline \partial (\Lambda^{2,0} \mathfrak{g}^*)$, we must have $\partial \omega \in \Lambda^{1,0}\mathfrak{n}_1^* \wedge \alpha^1 \wedge \overline{\alpha}^1$. Then, $\partial \omega$ has no components inside $\Lambda^{1,1} \mathfrak{n}_1^* \wedge \alpha^1$, so that, for all $X,Y \in \mathfrak{n}_1$, one must have
\[
d\omega(Z_1,X-iJX,Y+iJY)=0.
\]
Exploiting $[\mathfrak{n},\mathfrak{n}]=\{0\}$ and $[A,J|_{\mathfrak{k}}]=0$, we compute
\begin{align*}
d\omega(Z_1,X-iJX,Y+iJY)&=-\omega([Z_1,X-iJX],Y+iJY)+\omega([Z_1,Y+iJY],X-iJX) \\
                        &=\frac{i}{2}\omega([e_{2n},X-iJX],Y+iJY)-\frac{i}{2}\omega([e_{2n},Y+iJY],X-iJX) \\
                        &=\frac{i}{2}\omega(AX-iJAX,Y+iJY)-\frac{i}{2}\omega(AY+iJAY,X-iJX) \\
                        &=-\frac{i}{2}g(AX-iJAX,J(X+iJY))+\frac{i}{2}g(AY+iJAY,J(X-iJX)) \\
                        &=-\frac{1}{2}g(AX-iJAX,Y+iJY)-\frac{1}{2}g(AY+iJAY,X-iJX) \\
                        &=-(g(AX,Y)+g(X,AY))-i(g(AX,JY)-g(JX,AY)) \\
                        &=-g((A+A^t)X,Y)-ig((A+A^t)X,JY),
\end{align*}
which vanishes for all $X,Y \in \mathfrak{k}$ if and only if $A$ is skew-symmetric. In particular, we obtain $h=0$ in \eqref{A} and the eigenvalues of $A$ are all purely imaginary.

Now, let us assume $a \neq 0$. Then, $a$ cannot be an eigenvalue of $A$, so that $A-a\text{Id}_{\mathfrak{k}}$ is non-singular, ensuring the existence of a unique $X \in \mathfrak{k}$ such that $\mathfrak{k} \ni v = AX-aX$.

Consider then the new $J$-Hermitian metric
\[
g^\prime=g\rvert_{\mathfrak{k}}+({e^1}^\prime)^2+({e^{2n}}^\prime)^2,
\]
where $e_1^\prime=e_1-X$, $e_{2n}^\prime=Je_1^\prime=e_{2n}-JX$ and duals are taken with respect to the splitting $\mathfrak{g}=\text{span}\left<e_1^\prime\right> \oplus \mathfrak{k} \oplus \text{span}\left<e_{2n}^\prime\right>$. Then, with respect to the new adapted unitary basis $\{e_1^\prime,e_2,\ldots,e_{2n-1}\}$ for $\mathfrak{n}$, we have
\[
\text{ad}_{e_{2n}^\prime}\rvert_{\mathfrak{n}}= \begin{pmatrix} a & 0 \\ 0 & A \end{pmatrix},
\]
with $a$ and $A$ as above: observe that $A$ is still skew-symmetric with respect to $g^{\prime}$, since $g\rvert_{\mathfrak{k}}=g^\prime\rvert_{\mathfrak{k}}$. By the characterization of K\"ahler almost abelian Lie algebras \cite{LW}, $(\mathfrak{g},J,g^{\prime})$ is K\"ahler.

We can then assume $a=0$ and the previous method can only work if we manage to prove that $v$ lies in the image of $A$, which corresponds to the orthogonal complement to $\ker A$ inside $\mathfrak{k}$. $A$ is now of the form
\[
A=\text{diag}\left( \begin{pmatrix} 0 & -b_1 \\ b_1 & 0 \end{pmatrix}, \ldots, \begin{pmatrix} 0 & -b_t \\ b_t & 0 \end{pmatrix} , 0,\ldots,0 \right),
\]
for some $b_k \in \R-\{0\}$, $k \in \{1,\ldots,t\}$, and some $t \in \{0,\ldots,n-1\}$.

Notice that $\ker A$ and its orthogonal complement inside $\mathfrak{k}_1$ are both $J$-invariant

Now, using again $\partial \omega = \overline\partial \beta \in \Lambda^{1,0}\mathfrak{n}_1^* \wedge \alpha^1 \wedge \overline{\alpha}^1$, we consider $k \in \{2,\ldots,t+1\}$, $l \in \{t+2,\ldots,n\}$ (implying $Z_k \in \Lambda^{1,0}(\ker A)^{\perp_g}$ and $Z_l \in \Lambda^{1,0}\ker A$) and we have
\begin{align*}
0=d \beta (Z_{\overline{1}},Z_k,Z_l)&=-\beta([Z_{\overline{1}},Z_k],Z_l) + \beta ([Z_{\overline{1}},Z_l],Z_k), \\
&=i \beta([e_{2n},Z_k],Z_l) - i \beta([e_{2n},Z_l],Z_k), \\
&=b_k \beta(Z_l,Z_k).
\end{align*}
We then must have $\beta(Z_k,Z_l)$, meaning
\begin{equation} \label{beta_split}
\beta\left(\Lambda^{1,0} \ker A, \Lambda^{1,0} \left((\ker A)^{\perp_g} \cap \mathfrak{k}\right) \right)=0.
\end{equation}
It is easy to see that, in our situation, given a $2$-form $\gamma \in \Lambda^2 \mathfrak{g}^*$, one has
\[
\left(\iota_{Z_{\overline{1}}} \iota_{Z_1} d\gamma \right) \rvert_{\ker A} = \frac{i}{2} (\iota_v \gamma) \rvert_{\ker A}.
\]
As a consequence, the condition that $\partial \omega$ and $\overline \partial \beta$ coincide along their respective $\alpha^1 \wedge \alpha^{\overline{1}} \wedge \Lambda^{1,0}(\ker A)^*$-components boils down to
\begin{equation} \label{tamed}
(\iota_v \beta)\rvert_{\Lambda^{1,0}(\ker A)} = (\iota_v \omega)\rvert_{\Lambda^{1,0}(\ker A)}.
\end{equation}
To exploit this, we first write
\[
v=w + \tilde{w}, \quad w \in \ker A, \; \tilde{w} \in (\ker A)^{\perp_g} \cap \mathfrak{k}=\operatorname{Im} A,
\]
and then
$
w = w^{1,0} + \overline{w^{1,0}}, w^{1,0} \in \Lambda^{1,0} (\ker A).
$
Evaluating the left-hand side of \eqref{tamed} on $w^{1,0}$, we obtain
\[
\beta\left(v,w^{1,0}\right)=\beta\left(w^{1,0},w^{1,0}\right)=0,
\]
where we used that $\beta$ is of type $(2,0)$ and condition \eqref{beta_split}. Instead, the right-hand side reads
\[
\omega\left(v,w^{1,0}\right)=-\frac{i}{2} \left\lvert w^{1,0} \right\rvert^2.
\]
It follows that $w=0$, so that $v \in (\ker A)^{\perp_g} \cap \mathfrak{k}=\operatorname{Im} A$. The claim then follows.
\end{proof}

Having dealt with the almost abelian case in full generality, we restrict to the six-dimensional strongly unimodular case and analyze the remaining almost nilpotent Lie algebras.

\begin{theorem} \label{tamed_almnip}
Let $\mathfrak{g}$ be a six-dimensional strongly unimodular almost nilpotent Lie algebra endowed with a complex structure $J$. Then, $\mathfrak{g}$ admits symplectic structures $\Omega$ taming $J$ if and only if $(\mathfrak{g},J)$ admits K\"ahler metrics.
\end{theorem}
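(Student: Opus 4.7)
By the characterization of \cite{EFV}, a complex structure $J$ on $\mathfrak{g}$ is tamed by a symplectic form if and only if there exists a $J$-Hermitian metric whose fundamental form $\omega$ satisfies $\partial \omega = \overline{\partial}\beta$ for some $\partial$-closed $(2,0)$-form $\beta$, and any such metric is automatically SKT. The reverse implication of the theorem is trivial, since the fundamental form of a K\"ahler metric tames $J$, so only the direction ``tamed $\Rightarrow$ K\"ahler'' requires proof. Theorem \ref{tamed_almab} already handles the almost abelian case, hence we may restrict to non-almost-abelian $\mathfrak{g}$.

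By Theorem \ref{SKT_LCSKT_class}, the six-dimensional strongly unimodular non-almost-abelian almost nilpotent Lie algebras admitting SKT structures are exactly
\[
\mathfrak{h}_3 \oplus \mathfrak{s}_{3.3}^0, \; \mathfrak{s}_{4.6} \oplus \mathbb{R}^2, \; \mathfrak{s}_{4.7} \oplus \mathbb{R}^2, \; \mathfrak{s}_{6.25}, \; \mathfrak{s}_{6.51}^{p,0}, \; \mathfrak{s}_{6.52}^{0,q}, \; \mathfrak{s}_{6.158}, \; \mathfrak{s}_{6.164}^p,
\]
none of which admits K\"ahler metrics (a fact to be verified via the standard structural obstructions for K\"ahler solvmanifolds, e.g.\ the minimal model being non-formal, or else by direct inspection of the Chevalley--Eilenberg differential). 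The theorem therefore reduces to proving that, on each of these eight Lie algebras, no complex structure $J$ is tamed by any symplectic form.

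Fix one such $\mathfrak{g}$ and a complex structure $J$ on it. When $J\mathfrak{n}^1 \not\subset \mathfrak{n}$ (the only option for $\mathfrak{s}_{6.52}^{0,q}$), Proposition \ref{heis_cpx} supplies an explicit basis of $(1,0)$-forms $\{\alpha^1,\alpha^2,\alpha^3\}$ whose structure equations are listed in Table \ref{table-cpxheis}; when $J\mathfrak{n}^1 \subset \mathfrak{n}$, the adapted presentation \eqref{sub_adapted}--\eqref{sub_adapted_condition} already used in the proof of Theorem \ref{SKT_LCSKT_class} furnishes a parallel explicit model. In either framing, one writes the generic $J$-Hermitian fundamental form $\omega$ via \eqref{om} and the generic $(2,0)$-form as $\beta = b_1 \alpha^{12} + b_2 \alpha^{13} + b_3 \alpha^{23}$ with $b_j \in \mathbb{C}$, subject to the linear constraint $\partial \beta = 0$. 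The plan is then to expand $\partial \omega - \overline{\partial}\beta$ coefficient-by-coefficient in the basis of $(2,1)$-forms and to show that this system of equations admits no solution compatible with the positivity of $\omega$.

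The main obstacle is organizational: several of the Lie algebras carry parametric families of complex structures, and the generic $\omega$ involves seven real parameters while $\beta$ contributes up to six more. The key simplification, mirroring the strategy used for SKT in Tables \ref{SKT_LCSKT_comp} and \ref{SKT_LCSKT_comp2}, is to target first those coefficients of $\partial\omega$ which, by the structure equations of $\mathfrak{g}$, are forced to lie outside the image of $\overline{\partial} : \Lambda^{2,0}\mathfrak{g}^* \to \Lambda^{2,1}\mathfrak{g}^*$; any such nonvanishing coefficient obstructs the equation $\partial\omega = \overline\partial\beta$ independently of the choice of $\beta$. Once this purely type-theoretic obstruction has been extracted in each case, the remaining equations are expected to collapse to a rigid linear identity in the positive parameters $\lambda_1,\lambda_2,\lambda_3$, analogous to the relations $\xi_{23\overline{2}\overline{3}} = 4\lambda_1$ encountered in Table \ref{SKT_LCSKT_comp}, yielding the desired contradiction and completing the proof.
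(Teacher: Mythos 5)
Your reduction is logically sound: tamed implies SKT by \cite{EFV}, the reverse implication is trivial, Theorem \ref{tamed_almab} covers the almost abelian case, and Theorem \ref{SKT_LCSKT_class} narrows the non-almost-abelian candidates to the eight SKT algebras you list (and your claim that none of them is K\"ahler follows cleanly from the comparison of the SKT and balanced classifications, rather than needing formality arguments). However, what remains is precisely the substance of the theorem, and you have not supplied it: the verification that $\partial\omega=\overline\partial\beta$ is unsolvable on each of the eight algebras is only announced as a plan, with the key steps (``forced to lie outside the image of $\overline\partial$'', ``expected to collapse to a rigid linear identity'') asserted but never established. This is a genuine gap, and it is a heavy one: for $\frs{4.6}\oplus\R^2$, $\frs{6.25}$, $\frs{6.51}^{p,0}$, $\frs{6.158}$, $\frs{6.164}^p$ all complex structures satisfy $J\mathfrak{n}^1\subset\mathfrak{n}$, so there is no classification of them up to equivalence available in the paper, and you would have to run the generic analysis through the adapted presentation \eqref{sub_adapted}--\eqref{sub_adapted_condition} with many free parameters in $J$, $\omega$ and $\beta$ simultaneously.

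You also miss the observation that makes the paper's proof short: a symplectic form taming $J$ is in particular a symplectic structure on $\mathfrak{g}$, so besides admitting SKT metrics the algebra must admit symplectic structures at all. Intersecting the SKT list with the classification of six-dimensional symplectic unimodular solvable Lie algebras (\cite{Mac}, or a direct check of which algebras carry a closed non-degenerate $2$-form) leaves only $\mathfrak{h}_3\oplus\frs{3.3}^0$ among the non-almost-abelian algebras of Theorem \ref{cpx-class}. On that single algebra the complex structures are classified (Table \ref{table-cpxheis}), the generic $\partial$-closed $(2,0)$-form is $\beta=z_1\alpha^{12}+z_2\alpha^{13}$, and the single evaluation $(\partial\omega-\overline\partial\beta)(Z_1,Z_3,Z_{\overline3})=\varepsilon\lambda_1\neq0$ finishes the proof. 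Incorporating this symplectic obstruction would both close your gap and eliminate essentially all of the computational burden your outline defers.
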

\begin{proof}
In the almost abelian case, the results follows from Theorem \ref{tamed_almab}. Now the Lie algebra $\mathfrak{h}_3 \oplus \frs{3.3}^0$ is the only Lie algebra of Theorem \ref{cpx-class} admitting both SKT structures and symplectic structures, both of whose existence is necessary for the existence of a complex structure tamed by a symplectic form: this is achieved by considering the Lie algebras of Theorem \ref{cpx-class} and comparing with \cite{Mac}, or by performing simple explicit computations, considering the generic closed $2$-form on each Lie algebra and checking if it can be non-degenerate. We can then take the generic complex structure on $\mathfrak{h}_3 \oplus \frs{3.3}^0$ and consider the basis $\{\alpha^1,\alpha^2,\alpha^3\}$ of $(1,0)$-forms yielding the structure equations in Table \ref{table-cpxheis}, by Proposition \ref{heis_cpx}. In such a basis, the generic $\partial$-closed $(2,0)$-form is
\[
\beta=z_1 \alpha^{12} + z_2 \alpha^{13}, \quad z_1,z_2 \in \C
\]
and, letting $\{Z_1,Z_2,Z_3,Z_{\overline{1}},Z_{\overline{2}},Z_{\overline{3}}\}$ denote the basis of $(\mathfrak{h}_3 \oplus \frs{3.3}^0) \otimes \C$ which is dual with respect to $\{\alpha^1,\alpha^2,\alpha^3,\alpha^{\overline{1}},\alpha^{\overline{2}},\alpha^{\overline{3}}\}$, one can compute
\[
(\partial \omega - \overline\partial \beta)(Z_1,Z_3,Z_{\overline{3}})=\varepsilon \lambda_1 \neq 0,
\]
negating the tamed condition and concluding the proof.
\end{proof}

\section{LCB and LCK structures} \label{sec_LCBLCK}

In \cite{Par}, the LCB condition was studied on almost abelian algebras, yielding a characterization result in all dimensions and a full classification result in dimension six.

The LCK condition has been studied more widely in literature: in \cite{Saw}, it was proven that the only (non-abelian) nilpotent Lie algebras admitting LCK structure are of the form $\mathfrak{h}_{2k+1} \oplus \R$, where $\mathfrak{h}_{2k+1}$ denotes the $(2k+1)$-dimensional Heisenberg Lie algebra we have defined in Section \ref{sec_cpx}. In the almost abelian case, the characterization results of \cite{AO} were used in \cite{Par} to obtain a full classification in dimension six.

The following results extend the previously-mentioned results to the complete six-dimensional strongly unimodular almost nilpotent case.

\begin{theorem}
Let $\mathfrak{g}$ be a six-dimensional strongly unimodular almost nilpotent Lie algebra. Then, $\mathfrak{g}$ admits LCB structures if and only if it is isomorphic to one of the Lie algebras of Theorem \ref{cpx-class}, with the exception of
\[
\frs{5.4}^0 \oplus \R, \qquad
\frs{6.18}^{1,-\frac{3}{2},-\frac{3}{2}}, \qquad
\frs{6.20}^{p,p,-\frac{3}{2}p},\, {\scriptstyle p>0}, \qquad
\frs{6.25}.
\]
Explicit examples of LCB structures on the remaining Lie algebras are provided in Tables \ref{table-exab}, \ref{table-exheis} and \ref{table-exnew}.
\end{theorem}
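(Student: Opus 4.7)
The plan is to separate the two directions of the equivalence. For the "if" direction, the explicit Hermitian structures displayed in Tables \ref{table-exab}, \ref{table-exheis} and \ref{table-exnew} can be verified to be LCB by a direct computation: for each structure one forms $\omega^2$, computes $d\omega^2$, reads off the Lee form $\theta$ from the defining relation $d\omega^2 = \theta \wedge \omega^2$ (which uniquely determines $\theta$ since $\omega^2$ is non-degenerate), and checks that $d\theta = 0$. The substantive content is therefore the "only if" direction, i.e.\ proving that no LCB structure exists on $\frs{5.4}^0 \oplus \R$, $\frs{6.18}^{1,-\frac{3}{2},-\frac{3}{2}}$, $\frs{6.20}^{p,p,-\frac{3}{2}p}$ (with $p>0$) or $\frs{6.25}$.

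The first three exceptional Lie algebras are almost abelian, so by Proposition \ref{almab_cpx} every Hermitian structure on each of them can be brought to the normal form given by the corresponding row of Table \ref{table-cpxab}, with the fundamental form of the shape \eqref{om_almab}. For each normal form I would write out $d\omega^2$ explicitly, extract $\theta$, impose $d\theta = 0$, and then show that the resulting polynomial system in the parameters $\lambda_1,\lambda_2,\lambda_3,w$ is incompatible with the positivity conditions $\lambda_j>0$ and $\lambda_2\lambda_3 > |w|^2$. The expected mechanism is that after imposing closure the residual equations force either a diagonal entry $\lambda_j$ to vanish or a degenerate relation such as $\lambda_2 \lambda_3 = |w|^2$, contradicting positivity; the precise specialization of $A$ in each case (cf.\ the Jordan-type decomposition preceding Proposition \ref{almab_cpx}) dictates which term of $d\theta$ provides the obstruction.

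The Lie algebra $\frs{6.25}$ is treated separately, since its nilradical is of Heisenberg type with one-dimensional commutator. I would split into two sub-cases. If $J \mathfrak{n}^1 \not\subset \mathfrak{n}$, Proposition \ref{heis_cpx} applies: the corresponding row of Table \ref{table-cpxheis} yields a canonical basis of $(1,0)$-forms, the generic Hermitian metric is \eqref{om}, and the non-existence is established by the same $d\theta$-computation as in the almost abelian sub-cases. If instead $J\mathfrak{n}^1 \subset \mathfrak{n}$, I would reuse the adapted-basis description from the proof of Theorem \ref{SKT_LCSKT_class}: write $\text{ad}_{e_6}|_{\mathfrak{n}}$ and $\eta = de^1$ in the block form \eqref{sub_adapted}, subject to the Jacobi constraint \eqref{sub_adapted_condition}, and specialize the parameters $m,q,c,v,\gamma_1,\gamma_2$ to those allowed by the isomorphism class $\frs{6.25}$. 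The LCB system $d\theta = 0$ is then linear in the remaining metric parameters and can be shown to have no solution.

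The main obstacle is the combinatorial bookkeeping: several branches arise from the Jordan-form decomposition of the adjoint action in the almost abelian situations, and further branches appear for $\frs{6.25}$ depending on the behaviour of $J$ on $\mathfrak{n}^1$. The conceptual heart, however, is uniform: closure of $\theta$ fails on a component of the form $\alpha^{i\bar{\jmath}}$ whose coefficient is a strictly positive combination of the $\lambda_k$'s and cannot be cancelled against any available free parameter, yielding the desired contradiction in each case.
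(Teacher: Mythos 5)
Your overall architecture (verify the tabulated examples; prove non-existence on the four exceptional algebras) matches the paper, and for the three almost abelian exceptions your plan differs only in route: the paper disposes of the entire almost abelian case by citing the classification in \cite{Par}, whereas you propose a direct computation with the normal forms of Proposition \ref{almab_cpx} and Table \ref{table-cpxab}. That alternative is legitimate in principle (more self-contained, at the cost of case-by-case computation), so the real issue lies with $\frs{6.25}$.

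For $\frs{6.25}$ your proposed mechanism has a genuine gap. First, a minor point: the sub-case $J\mathfrak{n}^1 \not\subset \mathfrak{n}$ is empty, since by \cite{FP_alm} every complex structure on $\frs{6.25}$ satisfies $J\mathfrak{n}^1 \subset \mathfrak{n}$ (accordingly, $\frs{6.25}$ has no row in Table \ref{table-cpxheis}). More seriously, in the adapted-basis framework of \cite{FP_alm} the metric is the fixed one making $\{e_1,\ldots,e_6\}$ orthonormal, so the unknowns in the condition $d\theta=0$ are the algebraic data $m,q,c,v,\gamma_1,\gamma_2$ of \eqref{sub_adapted}, not ``remaining metric parameters''; and this system is \emph{not} inconsistent. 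The coarse constraints $m=0$, $q\neq 0$ are common to $\frs{4.7}\oplus\R^2$, $\frs{5.16}\oplus\R$ and $\frs{6.25}$, and the paper shows that imposing $d\theta=0$ leaves genuine solutions — the contradiction is obtained only by exhibiting, for each residual branch (e.g.\ $v_1=-c$, or $c+v_1\neq 0$ with \eqref{condition_LCB} zero or nonzero), an explicit change of basis identifying the resulting Lie algebra with $\frs{5.16}\oplus\R$ or $\frs{4.7}\oplus\R^2$, hence not with $\frs{6.25}$. In other words, the obstruction is at the level of the isomorphism class of the bracket data, not a positivity or solvability obstruction of the kind you anticipate (``a strictly positive combination of the $\lambda_k$'s that cannot be cancelled''). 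Your plan, which expects the LCB system simply to have no solution once the parameters are ``specialized to $\frs{6.25}$'', omits precisely this isomorphism-class discrimination step, and without it the argument would stall: the equations do admit solutions, and one must prove none of them is isomorphic to $\frs{6.25}$.
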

\begin{proof}
The result in the almost abelian case follows from \cite{Par}. It remains to prove that $\frs{6.25}$ does not admit LCB structures. This Lie algebra has nilradical $\mathfrak{n}$ isomorphic to $\mathfrak{h}_3 \oplus \R^2$ and all its complex structures satisfy $J\mathfrak{n}^1 \subset \mathfrak{n}$, by \cite{FP_alm}. Following the same techniques of the proof of Theorem \ref{SKT_LCSKT_class}, let us assume $\frs{6.25}$ admits an LCB structure $(J,g)$; then, there exists an orthonormal basis $\{e_1,\ldots,e_6\}$, with $\mathfrak{n}=\R\left<e_1,\ldots,e_5\right>$, $\mathfrak{n}_1=\R\left<e_1\right>$ and $Je_1=e_2$, $Je_3=e_4$, $Je_5=e_6$, with respect to which one has \eqref{sub_adapted}, with the further conditions $m=0$, $q \neq 0$. Now, \eqref{sub_adapted_condition}
implies $\gamma_2=\frac{c}{q} v^\flat$ must hold. By \cite{FP_alm}, the Lee form is
\begin{equation} \label{theta_sub}
\theta= -v_2e^1+(c+v_1)e^2+Jv^\flat - m e^6,
\end{equation}
and a computation shows
$d\theta(e_3,e_4)=-cv_2,$
meaning $v_2=0$, since $c \neq 0$ is forced by the non-vanishing of $\eta$. Moving on, we have
\begin{equation} \label{dthetaXe6}
d\theta(X,e_6)=\frac{1}{q}\left( (c+v_1)(cv^\flat + q J\alpha) -q^2v^\flat \right), \quad X \in \mathfrak{k}.
\end{equation}
If we assume $v_1=-c$, \eqref{dthetaXe6} implies $v=0$, and now the change of basis
\begin{gather*}
f_1=e_1,\;f_2=\frac{1}{q} (\gamma_1(e_4)e_1 + \gamma_1(e_3)e_2) +e_3, \; f_3=\frac{1}{q}(-\gamma_1(e_3)e_1+\gamma_1(e_4)e_2) + e_4, \\ f_4=-\frac{q}{c} e_5, \; f_5=e_2, \; f_6=\frac{1}{q} e_6
\end{gather*}
provides an explicit isomorphism with $\frs{5.16} \oplus \R$, a contradiction. This means that we must have $c+v_1 \neq 0$, instead. Equation \eqref{dthetaXe6} then forces
\[
\alpha=\frac{c(c+v_1)-q^2}{q(c+v_1)} Jv^\flat.
\]
Now, if
\begin{equation} \label{condition_LCB}
(c(c+v_1)-q^2)|v|^2-q^2v_1(c+v_1)
\end{equation}
vanishes, the change of basis
\begin{gather*}
f_1=e_1, \quad f_2=\frac{c(c+v_1)-q^2}{q^2(c+v_1)} g(v,e_3) e_1 + \frac{g(v,e_4)}{c+v_1}e_2 + e_3, \\ f_3=\frac{c(c+v_1)-q^2}{q^2(c+v_1)} g(v,e_4) e_1 - \frac{g(v,e_3)}{c+v_1}e_2 + e_4, \quad f_4=-\frac{1}{q} Jv + e_5, \quad f_5=e_2, \quad f_6=\frac{1}{q}e_6
\end{gather*}
induces an isomorphism with the Lie algebra $\frs{4.7} \oplus \R^2$, a contradiction. Instead, if \eqref{condition_LCB} is non-zero, a similar change of basis, but with
\[
f_4=\frac{q^2(c+v_1)}{(c(c+v_1)-q^2)|v|^2-q^2v_1(c+v_1)} \left( Jv -qe_5\right),
\]
does the same with respect to the Lie algebra $\frs{5.16} \oplus \R$, again a contradiction, concluding the proof.
\end{proof}

\begin{theorem}
Let $\mathfrak{g}$ be a six-dimensional strongly unimodular almost nilpotent Lie algebra. Then, $\mathfrak{g}$ admits LCK structures if and only if it is isomorphic to one among
\begin{alignat*}{4}
&\frs{3.3}^0 \oplus \R^3, &\qquad
&\frs{5.13}^{0,0,r} \oplus \R,\, {\scriptstyle r>0}, &\qquad
&\frs{5.16} \oplus \R, &\qquad
&\frs{6.17}^{1,1,1,-4}, \\
&\frs{6.19}^{p,p,-4p,p}, \,{\scriptstyle p \neq 0},  &\qquad
&\frs{6.21}^{p,p,r,-4p}, \,{\scriptstyle p \neq 0,\, r>0}, &\qquad
&\frs{6.159},   &\qquad
&\frs{6.166}^p,\, {\scriptstyle 0<|p|\leq 1}.
\end{alignat*}
Explicit examples of LCK structures on these Lie algebras are provided in Tables \ref{table-exab} and \ref{table-exheis}.
\end{theorem}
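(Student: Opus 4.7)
The plan is to mimic the pattern of Theorem \ref{SKT_LCSKT_class}: since the LCK condition forces the Lee form $\theta$ to be closed, every LCK Hermitian structure is automatically LCB. Hence we may restrict our search to those Lie algebras classified in the preceding LCB theorem, and then for each of them decide whether the more restrictive identity $d\omega=\tfrac{1}{2}\theta\wedge\omega$ can be realized. The almost abelian portion of the list is already settled by \cite{Par}, so the only work lies in the non-almost-abelian Lie algebras, split according to whether the commutator of the nilradical has dimension $1$ or strictly greater than $1$.

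The existence half of the theorem is purely a matter of exhibiting explicit LCK structures on $\frs{5.16}\oplus\R$, $\frs{6.159}$ and $\frs{6.166}^p$, which are precisely the non-almost-abelian Lie algebras appearing in the statement. I would collect these examples in Tables \ref{table-exab} and \ref{table-exheis}, together with the corresponding Lee $1$-forms, and check the identity $d\omega=\tfrac12\theta\wedge\omega$ and $d\theta=0$ directly.

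For the non-existence half, I would proceed case-by-case. For the four Lie algebras $\frs{6.145}^0$, $\frs{6.147}^0$, $\frs{6.152}$, $\frs{6.154}^0$, whose nilradical satisfies $\dim\mathfrak{n}^1>1$, I would plug the $(1,0)$-bases produced by Propositions \ref{cpx-145_147} and \ref{cpx-152_154} together with the generic Hermitian form \eqref{om} into the equation $d\omega=\tfrac12\theta\wedge\omega$; the Lee form is then determined algebraically from $d\omega$, and one extracts a small set of components of $d\omega-\tfrac12\theta\wedge\omega$ whose non-vanishing forces a contradiction. For the remaining non-almost-abelian LCB Lie algebras (those with Heisenberg-type nilradical, such as $\mathfrak{h}_3\oplus\frs{3.3}^0$, $\frs{4.6}\oplus\R^2$, $\frs{4.7}\oplus\R^2$, $\frs{6.44}$, $\frs{6.51}^{p,0}$, $\frs{6.52}^{0,q}$, $\frs{6.158}$, $\frs{6.162}^1$, $\frs{6.164}^p$, $\frs{6.165}^p$, $\frs{6.167}$), I would split according to whether $J\mathfrak{n}^1\not\subset\mathfrak{n}$, in which case Proposition \ref{heis_cpx} and Table \ref{table-cpxheis} give normal forms for the complex structure, or $J\mathfrak{n}^1\subset\mathfrak{n}$, in which case the adapted bracket data \eqref{sub_adapted} together with the Jacobi constraint \eqref{sub_adapted_condition} and the explicit Lee form \eqref{theta_sub} from \cite{FP_alm} are available. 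In each case the LCK identity, evaluated on suitably chosen frame vectors, yields a small linear system forcing either $\eta=0$ (destroying the nilradical structure), or parameter values producing an isomorphism with one of the Lie algebras already known to admit LCK, or a direct contradiction.

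The main obstacle is the sheer number of normal forms to process, particularly in the $J\mathfrak{n}^1\subset\mathfrak{n}$ subcase for Heisenberg-type nilradicals, where several structural parameters $m,q,c,v,\gamma_1,\gamma_2$ are in play and one has to exhibit explicit changes of basis (of the same flavour as those employed at the end of the LCB theorem) to rule out the borderline configurations. A convenient shortcut worth exploiting is that, since the LCK identity implies that $d\omega$ is divisible by $\omega$, many of the components of $d\omega$ must already vanish identically; isolating these vanishing components early on drastically prunes the parameter space before one ever invokes the closedness of $\theta$.
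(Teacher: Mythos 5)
Your plan is correct and follows essentially the same route as the paper: \cite{Par} for the almost abelian case, the adapted data \eqref{sub_adapted}--\eqref{sub_adapted_condition} with the Lee form \eqref{theta_sub} for $J\mathfrak{n}^1\subset\mathfrak{n}$ (leading to an explicit isomorphism with $\frs{5.16}\oplus\R$), and the normal forms of Propositions \ref{cpx-145_147}, \ref{cpx-152_154} and \ref{heis_cpx} with the generic metric \eqref{om} for the remaining cases. The only cosmetic difference is that in those remaining cases the paper imposes $d\omega=\mu\wedge\omega$ for a generic closed $1$-form $\mu$ rather than computing the Lee form and requiring $d\omega=\tfrac12\theta\wedge\omega$, which is an equivalent and slightly lighter bookkeeping device.
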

\begin{proof}
In the almost abelian case, the result follows from \cite{Par}. If the nilradical $\mathfrak{n}$ of $\mathfrak{g}$ has one-dimensional commutator and $\mathfrak{g}$ admits an LCK structure $(J,g)$, we first assume that the complex structure satisfies $J\mathfrak{n}^1 \subset \mathfrak{n}$: then, following the steps in the proof of Theorem \ref{SKT_LCSKT_class}, by \cite{FP_alm}, $\mathfrak{g}$ admits an orthonormal basis $\{e_1,\ldots,e_6\}$, with $\mathfrak{n}=\R\left<e_1,\ldots,e_5\right>$, $\mathfrak{n}_1=\R\left<e_1\right>$ and $Je_1=e_2$, $Je_3=e_4$, $Je_5=e_6$, with respect to which one has \eqref{sub_adapted}, satisfying \eqref{sub_adapted_condition}.
The Lee form $\theta$ is given by \eqref{theta_sub} and its closure implies $v_2=0$, since a computation shows
\[
d\theta(e_2,e_5)=-v_2 m, \quad d\theta(e_3,e_4)=-v_2c, \quad d\theta(X,e_5)=-v_2 \gamma_2(X),\; X \in \mathfrak{k},
\]
and we want $\eta$ not to vanish. Now, we can explicitly compute
\begin{align*}
d\omega - \frac{\theta}{2} \wedge \omega =& -\frac{m}{2} e^{126} + \frac{c-v_1}{2} e^2 \wedge (e^{34}-e^{56}) - (J\gamma_1 - \gamma_2) \wedge e^{16} \\ &- \gamma_1 \wedge e^{26} - \gamma_2 \wedge e^{25} - Jv^\flat \wedge (e^{12}-e^{56}),
\end{align*}
so that the LCK condition forces
\[
m=0, \quad v_1=c, \quad v=0, \quad \alpha=\nu=0.
\]
We also obtain $c \neq 0$, in order for $\eta$ not to vanish.
Now, an explicit isomorphism with $\frs{5.16} \oplus \R$ is provided by the change of basis
\[
f_1=\frac{1}{c} e_1, \quad f_2=e_3, \quad f_3=e_4, \quad f_4=\frac{q}{c} e_5, \quad f_5=e_2, \quad f_6=\frac{1}{q}e_6.
\]
This rules out the existence of LCK structures on all the other Lie algebras of Theorem with nilradical with one-dimensional commutator and only admitting complex structures satisfying $J\mathfrak{n}^1 \subset \mathfrak{n}$, namely $\frs{4.6} \oplus \R^2$, $\frs{6.25}$ (here, the non-existence of LCB structures was already enough), $\frs{6.51}^{p,0}$, $\frs{6.158}$ and $\frs{6.164}^p$.

Now, we only need to study Lie algebras with Heisenberg-type nilradical with respect to complex structures satisfying $J\mathfrak{n}^1 \not\subset \mathfrak{n}$ and $\frs{6.145}^0$, $\frs{6.147}^0$, $\frs{6.152}$, $\frs{6.154}^0$, whose nilradical has at least two-dimensional commutator. As we have already done in the previous proofs, we do this by considering the structure equations of Table \ref{table-cpxheis}, by virtue of Proposition \ref{heis_cpx}, and those of Propositions \ref{cpx-145_147} and \ref{cpx-152_154}, together with the generic Hermitian metric \eqref{om}. Instead of computing the associated Lee form $\theta$, imposing its closure and the LCK condition $d\omega=\frac{\theta}{2} \wedge \omega$, we first exhibit the generic real closed $1$-form $\mu$ and then impose the condition $d\omega=\mu \wedge \omega$, proving that it always leads to a contradiction. Notice that, apart from $\mathfrak{h}_3 \oplus \frs{3.3}^0$, a contradiction is reached as soon as $\mu$ is required to vanish, as $\mu=0$ would imply $d\omega=0$ and we already know that none of these Lie algebras (again, except $\mathfrak{h}_3 \oplus \frs{3.3}^0$) admit K\"ahler structures: $\frs{6.164}^0$ does not admit 1\textsuperscript{st}-Gauduchon structures, while the other ones do not admit LCSKT structures, and both are generalizations of the K\"ahler condition. The computations are summarized in Table \ref{LCK_comp}.
\end{proof}

\begin{table}[H]
\begin{center}
\addtolength{\leftskip} {-2cm}
\addtolength{\rightskip}{-2cm}
\scalebox{0.85}{
\begin{tabular}{|l|l|l|}
\hline \xrowht{15pt}
Lie algebra & Generic real closed $1$-form $\mu$ & $d \omega - \mu \wedge \omega=\frac{1}{2} \xi_{jk\overline{m}} \alpha^{jk\overline{m}} + \frac{1}{2} \overline{\xi_{jk\overline{m}}} \alpha^{m \overline{jk}}$\\
\hline 
\hline \xrowht{25pt}
$\mathfrak{h}_3 \oplus \frs{3.3}^0$ & $iy(\alpha^1-\alpha^{\overline{1}})+\zeta \alpha^3 + \overline{\zeta} \alpha^{\overline{3}}$ & \hspace{-0.8em} $\begin{array}{l} \xi_{12\overline{2}}=y\lambda_2, \; \xi_{23\overline{2}}=i\zeta \lambda_2 \; \implies \, y=\zeta=0, \\ \xi_{13\overline{3}}=\varepsilon \lambda_1 \end{array}$ \\
\hline \xrowht{25pt}
$\frs{4.7} \oplus \R^2$ & $iy(\alpha^1-\alpha^{\overline{1}})+\zeta \alpha^3 + \overline{\zeta} \alpha^{\overline{3}}$ & \hspace{-0.8em} $\begin{array}{l} \xi_{12\overline{2}}=\varepsilon \lambda_1 + y\lambda_2, \; \xi_{13\overline{1}}=i(\zeta \lambda_1 + y \overline{w}_2) \; \implies \, y=-\varepsilon \frac{\lambda_1}{\lambda_2},\; \zeta= \frac{\varepsilon}{\lambda_2} \overline{w}_2, \\ \xi_{13\overline{3}}=-\frac{\varepsilon}{\lambda_2}(\lambda_1\lambda_3-|w_2|^2) \end{array}$ \\
\hline \xrowht{20pt}
$\frs{6.44}$ & $iy(\alpha^1-\alpha^{\overline{1}})$ & $\xi_{12\overline{2}}=y\lambda_2$ \\
\hline \xrowht{20pt}
$\frs{6.52}^{0,q}$, $q>0$ & $iy(\alpha^1-\alpha^{\overline{1}})$ & $\xi_{13\overline{3}}=y\lambda_3$ \\
\hline \xrowht{20pt}
$\frs{6.145}^0$ & $\zeta \alpha^3 + \overline{\zeta} \alpha^{\overline{3}}$ & $\xi_{13\overline{1}}=i\zeta \lambda_1$ \\
\hline \xrowht{20pt}
\multirow{3}{1em}{\parbox{1\linewidth}{\vspace{3.5em}\hbox{$\frs{6.147}^0$}}} & $\zeta \alpha^3 + \overline{\zeta} \alpha^{\overline{3}}$ & $\xi_{13\overline{1}}=i\zeta \lambda_1$ \\
\cline{2-3} \xrowht{20pt}
 & $\zeta \alpha^3 + \overline{\zeta} \alpha^{\overline{3}}$ & $\xi_{12\overline{3}}=-i\lambda_1$ \\
\cline{2-3} \xrowht{20pt}
 & $\zeta \alpha^3 + \overline{\zeta} \alpha^{\overline{3}}$ & $\xi_{12\overline{3}}=-i\lambda_1$ \\
\hline \xrowht{20pt}
\multirow{2}{1em}{\vspace{-1.2em}\parbox{1\linewidth}{\hbox{$\frs{6.152}$}}} & $iy(\alpha^2-\alpha^{\overline{2}})-iy\Re(z_2)(\alpha^3-\alpha^{\overline{3}})$ & $\xi_{12\overline{1}}= -y\lambda_1$ \\
\cline{2-3} \xrowht{20pt} & $iy(\alpha^2-\alpha^{\overline{2}})-iy\Re(z_1)(\alpha^3-\alpha^{\overline{3}})$ & $\xi_{12\overline{1}}= -y\lambda_1$ \\
\hline \xrowht{20pt}
$\frs{6.154}^0$ & $iy(\alpha^2-\alpha^{\overline{2}})-iy\Re(z_1)(\alpha^3-\alpha^{\overline{3}})$ & $\xi_{12\overline{1}}=-y\lambda_1$ \\
\hline \xrowht{25pt}
$\frs{6.162}^1$ & $iy(\alpha^1-\alpha^{\overline{1}})$ & \hspace{-0.8em} $\begin{array}{l} \xi_{12\overline{2}}=(y-2)\lambda_2 \; \implies \, y=2, \\ \xi_{13\overline{3}}=4\lambda_3 \end{array}$ \\
\hline \xrowht{25pt}
$\frs{6.165}^p$, $p>0$ & $iy(\alpha^1-\alpha^{\overline{1}})$ & \hspace{-0.8em} $\begin{array}{l} \xi_{12\overline{2}}=(y-2p)\lambda_2 \; \implies \, y=2p, \\ \xi_{13\overline{3}}=4p\lambda_3 \end{array}$ \\
\hline \xrowht{20pt}
$\frs{6.167}$ & $iy(\alpha^1-\alpha^{\overline{1}})$ & $\xi_{12\overline{2}}=y\lambda_2$ \\
\hline
\end{tabular}}
\caption{Six-dimensional strongly unimodular (non-almost abelian) almost nilpotent Lie algebras not admitting LCK structures (excluding those with Heisenberg-type nilradical and only admitting complex structures satisfying $J\mathfrak{n}^1 \subset \mathfrak{n}$).} \label{LCK_comp}
\end{center}
\end{table}

\section{Strongly Gauduchon and balanced structures} \label{sec_bal}
A  Hermitian structure $(J,g)$ on a $2n$-dimensional smooth manifold is called Gauduchon when it satisfies $\partial \overline\partial \omega=0$. Clearly, the Gauduchon condition generalizes the balanced condition $d \omega^{n-1}=0$.

In \cite{Xiao}, the author introduced the \emph{strongly Gauduchon} condition, defined by the condition $\partial \omega^{n-1} = \overline\partial \beta$, for some $(n,n-2)$-form $\beta$. The strongly Gauduchon sits in between the Gauduchon and the balanced conditions, strengthening the former and generalizing the latter.

Speaking of balanced structures in the Lie algebra setting, we mention the classification result in the six-dimensional nilpotent case obtained in \cite{Uga} and the characterization results for the almost abelian case in \cite{FP_bal}, yielding a full classification result in six dimensions. Partial results in the almost nilpotent case with Heisenberg-type nilradical were obtained in \cite{FP_alm}.

In the next theorem, we provide a full classification result for six-dimensional strongly unimodular almost nilpotent Lie algebras admitting balanced structures, showing that the generalization provided by the strongly Gauduchon condition is not reflected in this classification result.

\begin{theorem} \label{StrG_bal_class}
Let $\mathfrak{g}$ be a six-dimensional strongly unimodular almost nilpotent Lie algebra. Then, $\mathfrak{g}$ admits strongly Gauduchon structures if and only if it admits balanced structures if and only if it is isomorphic to one among
\begin{alignat*}{4}
&\frs{3.3}^0 \oplus \R^3,   &\qquad
&\frs{5.8}^0 \oplus \R,  &\qquad
&\frs{5.9}^{1,-1,-1} \oplus \R,  &\qquad
&\frs{5.11}^{p,p,-p} \oplus \R,\, {\scriptstyle p>0},   \\
&\frs{5.13}^{0,0,r} \oplus \R, \, {\scriptstyle r>0},   &\qquad
&\frs{5.16} \oplus \R,  &\qquad
&\frs{6.145}^{0}, &\qquad
&\frs{6.147}^{0}, \\
&\frs{6.159},  &\qquad
&\frs{6.162}^{1},   &\qquad
&\frs{6.165}^p, {\scriptstyle p>0},  &\qquad
&\frs{6.166}^p,\, {\scriptstyle 0<|p|\leq 1},  \\
&\frs{6.167}.
\end{alignat*}
Explicit examples of balanced structures on such Lie algebras are provided in Tables \ref{table-exab}, \ref{table-exheis} and \ref{table-exnew}. 
\end{theorem}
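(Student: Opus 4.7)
The plan is to establish the two existence results of the theorem simultaneously by proving a single chain: balanced structures exist on every Lie algebra in the statement, and strongly Gauduchon structures fail on every other Lie algebra of Theorem \ref{cpx-class}. Since any balanced metric is strongly Gauduchon, this yields both directions at once, as well as the equivalence between the two classifications in this setting.

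For the constructive half, I will appeal to \cite{FP_bal} for the almost abelian Lie algebras in the list, to \cite{FP_alm} for those with Heisenberg-type nilradical already treated therein, and to the normal forms of Propositions \ref{cpx-145_147} and \ref{heis_cpx} for the remaining cases, in particular $\frs{6.145}^{0}$ and $\frs{6.147}^{0}$. On these two, the first family of complex structures \eqref{145_J1}/\eqref{147_J1} with $\nu=0$, paired with a diagonal Hermitian metric $\omega = i(\lambda_1 \alpha^{1\overline{1}} + \lambda_2 \alpha^{2\overline{2}} + \lambda_3 \alpha^{3\overline{3}})$, already satisfies $d\omega^{2} = 0$ after a quick check using the structure equations. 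All explicit balanced examples will be collected in Tables \ref{table-exab}, \ref{table-exheis}, and \ref{table-exnew}.

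For the non-existence direction, the approach is uniform. For each unlisted $\mathfrak{g}$ from Theorem \ref{cpx-class}, the relevant proposition among \ref{almab_cpx}, \ref{heis_cpx}, \ref{cpx-145_147}, \ref{cpx-152_154} provides a normal form for the generic pair $(J,\omega)$, with $\omega$ as in \eqref{om}. Since $\Lambda^{3,2}\mathfrak{g}^{*}$ is three-dimensional, spanned by $\{\alpha^{123\overline{j}\overline{k}}\}_{1\le j<k\le 3}$, and $\Lambda^{3,1}\mathfrak{g}^{*}$ is spanned by $\alpha^{123\overline{j}}$ for $j=1,2,3$, the strongly Gauduchon condition $\partial \omega^{2} = \overline\partial\beta$ with $\beta = b_{1}\alpha^{123\overline{1}} + b_{2}\alpha^{123\overline{2}} + b_{3}\alpha^{123\overline{3}}$ becomes a $3 \times 3$ linear system in $(b_{1},b_{2},b_{3})$ whose coefficient matrix and right-hand side are explicit polynomials in the metric parameters $\lambda_{j},w_{k}$ and the moduli of $J$. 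Ruling out strong Gauduchon in each case reduces to showing that, after eliminating the $b_{j}$'s, at least one coefficient of $\partial \omega^{2}$ remains non-zero for all admissible $(\lambda_{j},w_{k})$, typically via a positive-definite combination forbidden by the constraints $\lambda_{j}>0$ and the positivity conditions on $\omega$.

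The main obstacle is the accumulated case-work across more than a dozen unlisted Lie algebras, several of which carry multiple families of complex structures. The most delicate cases are $\frs{6.152}$ and $\frs{6.154}^{0}$, whose complex structures split into the two families of Proposition \ref{cpx-152_154} and must be handled separately, and the parameter-dependent almost abelian families $\frs{6.17}$, $\frs{6.19}$, $\frs{6.21}$, and $\frs{5.13}^{p,-p,r}$ with $p \neq 0$, where the non-existence must hold for \emph{all} admissible metrics while the known balanced sub-families (with $p=0$ or analogous degeneracies) are excluded from the unlisted range by hypothesis. A useful shortcut, which I expect to trim the computation, is that whenever the complex structure admits a non-zero closed $(3,0)$-form — as does the family \eqref{152_J1} on $\frs{6.152}$, by Remark \ref{rem_152} and \cite{FOU} — the non-existence of strongly Gauduchon metrics follows directly from the results of \cite{FOU}, so that only the complementary family \eqref{152_J2} needs a full parameter-level computation.
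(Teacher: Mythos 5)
Your overall logic (balanced on every listed algebra, no strongly Gauduchon on every other algebra of Theorem \ref{cpx-class}, and balanced $\Rightarrow$ strongly Gauduchon) is sound, and your explicit examples on $\frs{6.145}^{0}$, $\frs{6.147}^{0}$ do work: for the families \eqref{145_J1}, \eqref{147_J1} with $\nu=0$ one has $d(\alpha^{2\overline{2}})=d(\alpha^{3\overline{3}})=0$ and every monomial of $d(\alpha^{1\overline{1}})$ annihilates the diagonal $\omega$, so $\omega\wedge d\omega=0$. The genuine gap is in the non-existence half: your uniform reduction to a $3\times 3$ linear system presupposes normal forms from Propositions \ref{almab_cpx}, \ref{heis_cpx}, \ref{cpx-145_147}, \ref{cpx-152_154}, but Proposition \ref{heis_cpx} only covers complex structures with $J\mathfrak{n}^1\not\subset\mathfrak{n}$ on algebras with Heisenberg-type nilradical. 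Among the unlisted algebras, $\frs{4.6}\oplus\R^2$, $\frs{6.25}$, $\frs{6.51}^{p,0}$, $\frs{6.158}$ and $\frs{6.164}^p$ admit \emph{only} complex structures with $J\mathfrak{n}^1\subset\mathfrak{n}$, and $\mathfrak{h}_3\oplus\frs{3.3}^0$, $\frs{4.7}\oplus\R^2$ admit structures of both kinds, so for all of these your scheme has no normal form to feed into the linear system and they are simply not excluded. The paper handles exactly this case by a different mechanism: using the adapted orthonormal basis and algebraic data \eqref{sub_adapted}--\eqref{sub_adapted_condition} from \cite{FP_alm}, it equates $\partial\omega^2$ with $\overline\partial\beta$ in that basis, forces $m=0$ and pins down $v$ and $\gamma_2$, and then exhibits an explicit change of basis showing the algebra must be isomorphic to $\frs{5.16}\oplus\R$; hence any strongly Gauduchon structure with $J\mathfrak{n}^1\subset\mathfrak{n}$ can only occur on a listed algebra. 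Without a substitute for this step your proof is incomplete.

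A secondary, non-fatal difference: in the almost abelian case you propose eliminating strongly Gauduchon metrics algebra-by-algebra over the parameter families of Table \ref{table-cpxab} ($\frs{6.17}$, $\frs{6.19}$, $\frs{6.21}$, $\frs{5.13}^{p,-p,r}$ with $p\neq 0$, etc.), which is in principle feasible but very heavy. The paper instead proves a structural statement valid for \emph{all} six-dimensional almost abelian algebras at once: equating \eqref{delom2} with \eqref{delbbeta} gives $\operatorname{tr} A=0$ and $v=(A-a\,\mathrm{Id}|_{\mathfrak{k}})X$ for an explicit $X$, so replacing $e_1$, $e_{2n}$ by $e_1-X$, $e_{2n}-JX$ turns any strongly Gauduchon structure into a balanced one on the same algebra, and the classification then follows immediately from the balanced classification of \cite{FP_bal}. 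Your shortcut via \cite{FOU} for the family \eqref{152_J1} on $\frs{6.152}$ is consistent with the paper's remark, but $\frs{6.154}^0$ and the family \eqref{152_J2} still require the parameter-level computation, as you anticipate.
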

\begin{proof}
We start with the almost abelian case, proving that a six-dimensional (not necessarily unimodular) almost abelian Lie algebra $\mathfrak{g}$ admits strongly Gauduchon structures if and only if it admits balanced structures. Assume $(J,g)$ is a strongly Gauduchon structure on $\mathfrak{g}$. Then, by \cite{FP_alm}, $\mathfrak{g}$ admits an orthonormal basis $\{e_1,\ldots,e_6\}$ such that $\mathfrak{n}=\R \left< e_1,\ldots,e_5\right>$ is an abelian ideal and $J$ satisfies $Je_1=e_6$, $Je_2=e_3$, $Je_4=e_5$. In such a basis, the Lie bracket of $\mathfrak{g}$ is encoded by the adjoint action of $e_{6}$ on $\mathfrak{n}$, which has the following matrix form, with respect to the chosen basis:
\[
\text{ad}_{e_6}\rvert_{\mathfrak{n}}=\begin{pmatrix}
a & 0 & 0 & 0 & 0 \\
v_1 & A_{11} & A_{12} & A_{13} & A_{14} \\
v_2 & -A_{12} & A_{11} & -A_{14} & A_{13} \\
v_3 & A_{31} & A_{32} & A_{33} & A_{34} \\
v_4 & -A_{32} & A_{31} & -A_{34} & A_{33} \end{pmatrix}, 
\]
with $a,v_k,A_{jk}$, $k=1,2,3,4$, $j=1,3$. We denote $\mathfrak{k} \coloneqq \mathfrak{n} \cap J\mathfrak{n}=\R \left<e_2,e_3,e_4,e_5\right>$, $v \coloneqq (v_1,v_2,v_3,v_4)^t \in \mathfrak{k}$, $A \coloneqq \text{ad}_{e_6}\rvert_{\mathfrak{k}} \in \mathfrak{gl}(\mathfrak{k})$. We consider the basis of $(1,0)$-forms defined by
\[
\alpha^1=\frac{1}{2}(e^1+ie^6), \quad \alpha^2=\frac{1}{2}(e^2+ie^3), \quad \alpha^3=\frac{1}{2}(e^4+ie^5),
\]
so that the fundamental form
\[
\omega=\frac{i}{2} \alpha^{1\overline{1}} + \frac{i}{2} \alpha^{2\overline{2}} + \frac{i}{2} \alpha^{3\overline{3}}
\]
satisfies
\begin{equation} \label{delom2}
\partial \omega^2 = \frac{i}{4} \alpha^{123} \wedge \left( -(v_3-iv_4) \alpha^{\overline{1}\overline{2}} + (v_1-iv_2) \alpha^{\overline{1}\overline{3}} + (\operatorname{tr} A) \alpha^{\overline{2} \overline{3}} \right),
\end{equation}
while the generic $(3,1)$-form
\[
\beta= \alpha^{123} \wedge (z_1 \alpha^{\overline{1}} + z_2 \alpha^{\overline{2}} + z_3 \alpha^{\overline{3}} ), \quad z_1,z_2,z_3 \in \C,
\]
satisfies
\begin{equation} \label{delbbeta}
\begin{aligned}
\overline\partial \beta=&\frac{i}{2} \alpha^{123\overline{1}} \wedge \left( \left( (A_{33}-iA_{14}+2A_{11}+a)z_2 + (A_{31}+iA_{32})z_3 \right) \alpha^{\overline{2}} \right.\\
&\left. + \left( (A_{13}+iA_{14})z_2 + (A_{11}-iA_{12}+2A_{22}+a)z_3 \right) \alpha^{\overline{3}} \right)
\end{aligned}
\end{equation}
Equating \eqref{delom2} and \eqref{delbbeta} yields $\operatorname{tr} A=0$ and
\begin{align*}
v=\begin{pmatrix} v_1 \\ v_2 \\ v_3 \\v_4 \end{pmatrix} &= \begin{pmatrix}
2(a-A_{11})\Re(z_3) + 2A_{12}\Im(z_3) + 2A_{13}\Re(z_2)-2A_{14}\Im(z_2) \\
-2(a-A_{11})\Im(z_3) + 2A_{12}\Re(z_3) -2A_{13}\Im(z_2) -2A_{14}\Re(z_2) \\
-2(a+A_{11})\Re(z_2) -2A_{31} \Re(z_3) + 2A_{32}\Im(z_3)-2A_{34}\Im(z_2) \\
2(a+A_{11})\Im(z_2) + 2A_{31}\Im(z_3) + 2A_{32}\Re(z_3)-2A_{34}\Re(z_2)
\end{pmatrix} \\
&= (A-a\,\text{Id}\rvert_{\mathfrak{k}})X, \;X= \begin{pmatrix} -2\Re(z_3) \\ 2\Im(z_3) \\ 2\Re(z_2) \\ -2\Im(z_2) \end{pmatrix},
\end{align*}
so that the new $J$-Hermitian metric for which the basis $\{e_1-X,e_2,e_3,e_4,e_5,e_6-JX\}$ is balanced, as the matrix associated with $\text{ad}_{e_6-JX}\rvert_{\mathfrak{k}}$ is of the form
\[
\text{ad}_{\tilde{e_6}}\rvert_{\mathfrak{k}} = \begin{pmatrix} a & 0 \\ 0 & A \end{pmatrix},
\]
with $a$ and $A$ being the same as before: in particular, $\operatorname{tr} A=0$. The classification of six-dimensional almost abelian Lie algebras admitting balanced structures was obtained in \cite{FP_bal}: the unimodular ones are listed in the statement.

We now turn our attention to Lie algebras with Heisenberg-type nilradical $\mathfrak{n}$, assuming the existence of a strongly Gauduchon structure $(J,g)$ with complex structure satisfying $J\mathfrak{n}^1 \subset \mathfrak{n}$: following the steps in the proof of Theorem \ref{SKT_LCSKT_class}, by \cite{FP_alm}, $\mathfrak{g}$ admits an orthonormal basis $\{e_1,\ldots,e_6\}$, with $\mathfrak{n}=\R\left<e_1,\ldots,e_5\right>$, $\mathfrak{n}_1=\R\left<e_1\right>$ and $Je_1=e_2$, $Je_3=e_4$, $Je_5=e_6$, with respect to which one has \eqref{sub_adapted}, with \eqref{sub_adapted_condition}.
In what follows, we write
\[
v=v_3e_3+v_4e_4, \quad \gamma_1=\gamma_{1,3}e^3+\gamma_{1,4}e^4, \quad \gamma_2=\gamma_{2,3}e^3+\gamma_{2,4}e^4, \quad v_k,\gamma_{j,k} \in \R,\;j=1,2,\,k=3,4.
\]
We now consider the basis of $(1,0)$-forms provided by
\[
\alpha^1=\frac{1}{2}(e^1+ie^2), \quad \alpha^2=\frac{1}{2}(e^3+ie^4), \quad \alpha^3=\frac{1}{2}(e^5+ie^6).
\]
We can compute
\begin{equation} \label{delom2_sub}
\partial\omega^2=\frac{i}{4} \alpha^{123} \wedge \left( m \alpha^{\overline{1}\overline{2}} + (v_3 - i v_4) \alpha^{\overline{1}\overline{3}} + (-v_1+iv_2-c) \alpha^{\overline{2}\overline{3}} \right).
\end{equation}
Taking the generic $(3,1)$-form
\[
\beta= \alpha^{123} \wedge (z_1 \alpha^{\overline{1}} + z_2 \alpha^{\overline{2}} + z_3 \alpha^{\overline{3}} ), \quad z_1,z_2,z_3 \in \C,
\]
we have
\begin{equation} \label{delbbeta_sub}
\overline\partial \beta=\frac{1}{2} \alpha^{123\overline{3}} \left( qz_1 \alpha^{\overline{1}} + \left( (\gamma_{2,3}-\gamma_{1,4} + i(\gamma_{2,4}+\gamma_{1,3}))z_1 - imz_2 \right) \alpha^{\overline{2}} \right).
\end{equation}
Equating \eqref{delom2_sub} and \eqref{delbbeta_sub} yields
\begin{gather*}
m=0, \quad v_1=-2mx_2+2\gamma_{1,3}x_1+2\gamma_{2,4}x_1-c, \quad v_2=-2\gamma_{1,4}x_1+2\gamma_{2,3}x_1,\\ v_3=0, \quad v_4=-2qx_1.
\end{gather*}
\begin{gather*}
m=0, \quad v_1-iv_2=-2i(\gamma_{2,3}-\gamma_{1,4} + i(\gamma_{2,4}+\gamma_{1,3}))z_1 -c, \quad
v_3-iv_4=2iqz_1.
\end{gather*}
The condition \eqref{sub_adapted_condition} now reads
\[
2cqz_1+iq(\gamma_{2,3}-i\gamma_{2,4})=0.
\]
We observe that $q$ cannot vanish, otherwise $\mathfrak{g}$ would be nilpotent, meaning we must set
\[
\gamma_{2,3}-i\gamma_{2,4}=-2icz_1.
\]
To recap, we now have
\begin{gather*}
\text{ad}_{e_6}\rvert_{\mathfrak{n}}=\begin{pmatrix}
		0 & 0 & \gamma_{1,3} & \gamma_{1,4} & -4c|z_1|^2+2\gamma_{1,3}\Re(z_1)-2\gamma_{1,4}\Im(z_1)-c \\
		0 & 0 & -2c\Im(z_1)-\gamma_{1,4} & -2c\Re(z_1)+\gamma_{1,3} & -2\gamma_{1,3}\Im(z_1)-2\gamma_{1,4}\Re(z_1) \\
		0 & 0 & 0 & q & -2q\Im(z_1) \\
		0 & 0 & -q & 0 & -2q\Re(z_1) \\
        0 & 0 & 0 & 0 & 0
	\end{pmatrix}, \\ \eta=c\,e^{34} -2c\Im(z_1)e^{35} -2c\Re(z_1) e^{45}.
\end{gather*}
Now, observing that $c$ cannot vanish (otherwise, $\eta=0$), we can perform a change of basis, making apparent the isomorphism between $\mathfrak{g}$ and $\frs{5.16} \oplus \R$, which admits balanced structures, by \cite{FP_alm}:
\begin{align*}
f_1=&e_1, \\
f_2=&\frac{1}{q} (\gamma_{1,4}e_1 + (\gamma_{1,3}-2c\Re(z_1))e_2) + e_3, \\
f_3=&\frac{1}{q} (-\gamma_{1,3}e_1 + (\gamma_{1,4}+2c\Im(z_1)e_2)+ e_4, \\
f_4=&\frac{q}{c(4|z_1|^2+1)} (2\Re(z_1)e_3-2\Im(z_1)e_4-e_5), \\ f_5=&e_2, \\
f_6=&\frac{1}{q} e_6.
\end{align*}

It remains to examine Lie algebras with Heisenberg-type nilradical with respect to complex structures satisfying $J\mathfrak{n}^1 \not\subset \mathfrak{n}$ and Lie algebras with nilradical satisfying $\dim \mathfrak{n}^1 > 1$. Following what we have done in the previous proofs, we can consider the structure equations of Table \ref{table-cpxheis}, thanks to Proposition \ref{heis_cpx}, and those of Propositions \ref{cpx-145_147} and \ref{cpx-152_154}, together the generic Hermitian metric of the form \eqref{om}. Focusing on the Lie algebras not appearing in the statement, we impose the strongly Gauduchon condition $\partial (\omega^2) = \overline\partial \beta$ for the generic $(3,1)$-form
\[
\beta= \alpha^{123} \wedge (z_1 \alpha^{\overline{1}} + z_2 \alpha^{\overline{2}} + z_3 \alpha^{\overline{3}} ), \quad z_1,z_2,z_3 \in \R,
\]
and show that we get to a contradiction. Table \ref{StrG_comp} summarizes these computations.
\end{proof}

\begin{table}[H]
\begin{center}
\addtolength{\leftskip} {-2cm}
\addtolength{\rightskip}{-2cm}
\scalebox{0.85}{
\begin{tabular}{|l|l|}
\hline \xrowht{15pt}
Lie algebra & $\partial \omega^2 - \overline\partial \beta = \frac{1}{2} \xi_{123\overline{j}\overline{k}} \alpha^{123\overline{j}\overline{k}}$\\
\hline
\hline \xrowht{20pt}
$\mathfrak{h}_3 \oplus \frs{3.3}^0$ & $\xi_{123\overline{2}\overline{3}}=-2i\varepsilon (\lambda_1\lambda_2 -|w_3|^2)$ \\
\hline \xrowht{20pt}
$\frs{4.7} \oplus \R^2$ & $\xi_{123\overline{2}\overline{3}}=-2i\varepsilon (\lambda_1 \lambda_3 - |w_2|^2)$ \\
\hline \xrowht{20pt}
$\frs{6.44}$ & $\xi_{123\overline{2}\overline{3}}=-2i\varepsilon (\lambda_1 \lambda_2 - |w_3|^2)$ \\
\hline \xrowht{20pt}
$\frs{6.52}^{0,q}$, $q>0$ & $\xi_{123\overline{2}\overline{3}}=-2i\delta (\lambda_1 \lambda_3 - |w_2|^2)$ \\
\hline \xrowht{20pt}
\multirow{2}{1em}{\vspace{0em} \parbox{1\linewidth}{\hbox{$\frs{6.152}$}}} & $\Re(z_2) \xi_{123\overline{1}\overline{2}} + \xi_{123\overline{1}\overline{3}}=-i\delta(\lambda_1 \lambda_2 - |w_3|^2)$  \\
\cline{2-2} \xrowht{20pt} &  $\Re(z_1) \xi_{123\overline{1}\overline{2}} + \xi_{123\overline{1}\overline{3}}=i\delta\Im(z_2)(\lambda_1 \lambda_2 - |w_3|^2)$  \\
\hline \xrowht{20pt}
$\frs{6.154}^0$ & $\Re(z)\xi_{123\overline{1}\overline{2}}+\xi_{123\overline{1}\overline{3}}=ix (\lambda_1 \lambda_2 - |w_3|^2)$ \\
\hline
\end{tabular}}
\caption{Six-dimensional strongly unimodular non-almost abelian almost nilpotent Lie algebras not admitting strongly Gauduchon structures (excluding those with Heisenberg-type nilradical and only admitting complex structures satisfying $J\mathfrak{n}^1 \subset \mathfrak{n}$).} \label{StrG_comp}
\end{center}
\end{table}

\begin{remark}
We note that it is possible to prove that all the Lie algebras of Theorem \ref{StrG_bal_class}, beside admitting balanced structures, also admit non-balanced strongly Gauduchon structures.
\end{remark}

\begin{remark}
The previous result provides two new classes of Lie algebras admitting balanced structures, namely $\frs{6.145}^0$ and $\frs{6.147}^0$, with the respect to the ones previously known in literature (see \cite{FP_alm, FP_bal}). Thanks to Remark \ref{rem_lattices}, this yields new examples of compact solvmanifolds admitting invariant balanced structures.
\end{remark}

Having studied the SKT condition and the balanced condition, we can now say something about K\"ahler condition.

\begin{corollary}
Let $\mathfrak{g}$ be a six-dimensional strongly unimodular almost nilpotent Lie algebra. Then, $\mathfrak{g}$ admits K\"ahler structures if and only if it admits both SKT and balanced structures if and only if it is isomorphic to one among
\begin{itemize} [leftmargin=0.5cm]
\item[] $\frs{3.3}^0 \oplus \R^3=\left(f^{26},-f^{16},0,0,0,0\right)$, \smallskip
\item[] $\frs{5.13}^{0,0,r} \oplus \R = \left(f^{26}, -f^{16},rf^{46},-rf^{36},0,0 \right)$, $r>0$.
\end{itemize}
Explicit examples of K\"ahler structures on these Lie algebras are exhibited in Table \ref{table-exab}. 
\end{corollary}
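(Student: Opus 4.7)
The plan is to exploit the two preceding classification theorems; no new computation is required. The forward implication is immediate from the definitions: any K\"ahler metric satisfies $d\omega=0$, hence trivially $dd^c\omega = 0$ (SKT condition) and $d\omega^{n-1}=0$ (balanced condition). Therefore admitting a K\"ahler structure implies admitting both an SKT structure and a balanced structure.

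For the reverse direction, I would not try to argue that SKT and balanced structures on $\mathfrak{g}$ must coincide (which is generally false at the level of a single Lie algebra); instead, I would simply intersect the two classification lists. Concretely, if $\mathfrak{g}$ admits some SKT structure then it must appear in the list of Theorem \ref{SKT_LCSKT_class}(i), and if it also admits some balanced structure then it must appear in the list of Theorem \ref{StrG_bal_class}. A direct comparison of these two lists shows that the intersection consists of exactly two isomorphism classes, namely $\frs{3.3}^0 \oplus \R^3$ and $\frs{5.13}^{0,0,r} \oplus \R$ with $r>0$.

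To close the equivalence, I would then invoke Table \ref{table-exab}, which exhibits explicit K\"ahler metrics on each of these two Lie algebras; this establishes that they do admit K\"ahler structures and hence closes the chain of implications:
\[
\text{K\"ahler} \;\Longrightarrow\; \text{SKT and balanced} \;\Longrightarrow\; \mathfrak{g} \cong \frs{3.3}^0 \oplus \R^3 \text{ or } \frs{5.13}^{0,0,r} \oplus \R \;\Longrightarrow\; \text{K\"ahler}.
\]

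There is no real obstacle here: the corollary is a bookkeeping consequence of the two nontrivial classifications already obtained, together with the trivial observation that $d\omega=0$ implies both $dd^c\omega=0$ and $d\omega^{n-1}=0$. The only point to be careful about is not to conflate ``admits an SKT metric and admits a balanced metric'' with ``admits a metric which is both SKT and balanced''; the former is the correct hypothesis to use when intersecting lists, and the latter would automatically force K\"ahlerness but is not what is assumed.
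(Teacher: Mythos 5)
Your proposal is correct and matches the paper's own argument: the paper likewise intersects the lists of Theorems \ref{SKT_LCSKT_class} and \ref{StrG_bal_class}, notes that admitting both SKT and balanced structures is necessary for K\"ahlerness, and cites the explicit K\"ahler examples (from \cite{FP_gk} and Table \ref{table-exab}) on the two surviving Lie algebras. Your cautionary remark about not conflating ``admits an SKT metric and a balanced metric'' with ``admits a metric that is both'' is exactly the right reading of the hypothesis.
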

\begin{proof}
Comparing Theorems \ref{SKT_LCSKT_class} and \ref{StrG_bal_class}, the two Lie algebras of the statement are the only six-dimensional strongly unimodular almost nilpotent Lie algebras, up to isomorphism, which admit both SKT and balanced structures, which is a necessary condition for the existence of K\"ahler structures. The existence of K\"ahler structures on them was established in \cite{FP_gk} and is confirmed by the explicit examples in Table \ref{table-exab}, proving the claim.
\end{proof}

In \cite{FV}, the authors formulated a conjecture, according to which a compact complex manifold admitting both SKT and balanced metrics necessarily admits K\"ahler metrics. So far, such conjecture has been confirmed in several cases  \cite{AN, Chi, Fei, FGV, FKV, FP_alm, FV, FV1, FS, FLY, Oti, Ver}. Moreover, in \cite{FS2}, the authors provided the first examples of complex structures on non-unimodular Lie groups admitting both SKT and balanced metrics but no K\"ahler metrics.

We note that the previous theorem confirms this conjecture for six-dimensional almost nilpotent solvmanifolds.

\section{Appendix} \label{appendix}
This section features the tables mentioned throughout the article. Table \ref{table-str} summarizes the main results, providing, at a glance, a list of all six-dimensional strongly unimodular almost nilpotent Lie algebras admitting complex structures, up to isomorphism, along with their structure equations and the types of special Hermitian structures they admit. Parentheses indicate trivial results: for example, the Lie algebra $\frs{5.8}^0 \oplus \R$ admits balanced structures, so it trivially admits LCB structures.
Tables \ref{table-cpxheis} and \ref{table-cpxab} were referenced in Propositions \ref{heis_cpx} and \ref{almab_cpx} and provide the classification of complex structures, up to automorphisms, on some of the Lie algebras of Table \ref{table-str} (with some further conditions on the possible Hermitian metrics in the almost abelian case). Finally, Tables \ref{table-exab}, \ref{table-exheis} and \ref{table-exnew} provide explicit examples of special Hermitian structures on each of the Lie algebras of Table \ref{table-str}. In order to simplify these final three tables, we recall the inclusions among the different types of special Hermitian metrics mentioned in the Introduction: for instance, the  balanced structure on $\frs{5.8}^0 \oplus \R$ is obviously also an example of LCB structure, but we omit writing it.

\newpage

\begin{table}[H]
\begin{center}
\addtolength{\leftskip} {-2cm}
\addtolength{\rightskip}{-2cm}
\scalebox{0.6}{
\begin{tabular}{|l|l|l|l|l|l|l|l|l|l|}
\hline \xrowht{15pt}
Name & Structure equations & Nilradical & K\"ahler & SKT & Balanced & LCK & LCSKT & LCB & 1\textsuperscript{st}-Gauduchon
 \\
\hline \hline  \xrowht{20pt}
$\frs{3.3}^0 \oplus \R^3$ &  $(f^{26},-f^{16},0,0,0,0)$ & $\R^5$ & \cmark & (\cmark) & (\cmark) & (\cmark) & (\cmark) & (\cmark) & (\cmark) 
\\ \hline  \xrowht{20pt}
$\frs{4.3}^{-\frac{1}{2},-\frac{1}{2}} \oplus \R^2$ &  $\left(f^{16},-\frac{1}{2}f^{26},-\frac{1}{2}f^{36},0,0,0\right)$ & $\R^5$ & -- & \cmark & -- & -- & -- & \cmark & (\cmark) 
   \\ \hline  \xrowht{20pt}
$\frs{4.5}^{p,-\frac{p}{2}} \oplus \R^2$ &  $\left(pf^{16},-\frac{p}{2}f^{26}+f^{36},-f^{26}-\frac{p}{2}f^{36},0,0,0\right)$, $p>0$ & $\R^5$ & -- & \cmark & -- & -- & -- & \cmark & (\cmark) 
   \\ \hline  \xrowht{20pt}
$\frs{5.4}^{0} \oplus \R$ &  $\left(f^{26},0,f^{46},-f^{36},0,0\right)$ & $\R^5$ & -- & \cmark & -- & -- & \cmark & -- & (\cmark) 
   \\ \hline  \xrowht{20pt}
$\frs{5.8}^{0} \oplus \R$ &  $\left(f^{26}+f^{36},-f^{16}+f^{46},f^{46},-f^{36},0,0\right)$ & $\R^5$ & -- & -- & \cmark & -- & -- & (\cmark) & --
   \\ \hline  \xrowht{20pt}
$\frs{5.9}^{1,-1,-1} \oplus \R$ &  $\left(f^{16},f^{26},-f^{36},-f^{46},0,0\right)$ & $\R^5$ & -- & -- & \cmark & -- & -- & (\cmark) & --
   \\ \hline  \xrowht{20pt}
$\frs{5.11}^{p,p,-p} \oplus \R$ &  $\left(pf^{16},pf^{26},-pf^{36}+f^{46},-f^{36}-pf^{46},0,0\right)$, $p>0$ & $\R^5$ & -- & -- & \cmark & -- & -- & (\cmark) & --
   \\ \hline  \xrowht{20pt}
$\frs{5.13}^{p,-p,r} \oplus \R$ &  $\left(pf^{16}+f^{26},-f^{16}+pf^{26},-pf^{36}+rf^{46},-rf^{36}-pf^{46},0,0\right)$, $r>0$ & $\R^5$ & $p=0$ & ($p=0$) & \cmark & ($p=0$) & ($p=0$) & (\cmark) & ($p=0$)
\\ \hline  \xrowht{20pt}
$\frs{6.14}^{-\frac{1}{4},-\frac{1}{4}}$ &  $\left(-\frac{1}{4}f^{16}+f^{26},-\frac{1}{4}f^{26},-\frac{1}{4}f^{36}+f^{46},-\frac{1}{4}f^{46},f^{56},0\right)$ & $\R^5$ & -- & -- & -- & -- & -- & \cmark & \cmark
\\ \hline  \xrowht{20pt}
$\frs{6.16}^{p,-4p}$ &  $\left(pf^{16}+f^{26}+f^{36},-f^{16}+pf^{26}+f^{46},pf^{36}+f^{46},-f^{36}+pf^{46},-4pf^{56},0\right)$, $p<0$ & $\R^5$ & -- & -- & -- & -- & -- & \cmark & \cmark
\\ \hline  \xrowht{20pt}
$\frs{6.17}^{1,q,q,-2(1+q)}$ &  $\left(f^{16},f^{26},qf^{36},qf^{46},-2(1+q)f^{56},0\right)$, $0<|q| \leq 1$, $q \neq 1$ & $\R^5$ & -- & -- & -- & $q=1$ & $q=1$ & \cmark & $q>0$
\\ \hline  \xrowht{20pt}
$\frs{6.18}^{1,-\frac{3}{2},-\frac{3}{2}}$ &  $\left(f^{16}+f^{26},f^{26},f^{36},-\frac{3}{2}f^{46},-\frac{3}{2}f^{56},0\right)$ & $\R^5$ & -- & -- & -- & -- & -- & -- & --
\\ \hline  \xrowht{20pt}
$\frs{6.19}^{p,p,q,-p-\frac{q}{2}}$ &  $\left(pf^{16},pf^{26},qf^{36},-\left(p+\frac{q}{2}\right)f^{46}+f^{56},-f^{46}-\left(p+\frac{q}{2}\right)f^{56},0\right)$, $p,q \neq 0$ & $\R^5$ & -- & $q=-2p$ & -- & $q=-4p$ & $q=-4p$ & \cmark & $p(2p+q) \leq 0$
\\ \hline  \xrowht{20pt}
$\frs{6.20}^{p,p,-\frac{3}{2}p}$ &  $\left(pf^{16}+f^{26},pf^{26},pf^{36},-\frac{3}{2}pf^{46}+f^{56},-f^{46}-\frac{3}{2}pf^{56},0\right)$, $p>0$ & $\R^5$ & -- & -- & -- & -- & -- & -- & --
\\ \hline  \xrowht{20pt}
$\frs{6.21}^{p,q,r,-2(p+q)}$ &  $\left(pf^{16}+f^{26},-f^{16}+pf^{26},qf^{36}+rf^{46},-rf^{36}+qf^{46},-2(p+q)f^{56},0\right)$, $|p| \geq |q|$, $q \neq -p$, $r>0$ & $\R^5$ & -- & $q=0$ & -- & $q=p$ & $q=p$ & \cmark & $pq \geq 0$
   \\ \hline  \xrowht{20pt}
$\mathfrak{h}_3 \oplus \frs{3.3}^{0}$ & $(f^{23},0,0,f^{56},-f^{46},0)$ & $\mathfrak{h}_3 \oplus \R^2$ & -- & \cmark & -- & -- & \cmark & \cmark & (\cmark)
   \\ \hline \xrowht{20pt}
$\frs{4.6} \oplus \R^2$ & $(f^{23},f^{26},-f^{36},0,0,0)$ & $\mathfrak{h}_3 \oplus \R^2$ & -- & \cmark & -- & -- & -- & \cmark & (\cmark)
   \\ \hline \xrowht{20pt}
$\frs{4.7} \oplus \R^2$ & $(f^{23},f^{36},-f^{26},0,0,0)$ & $\mathfrak{h}_3 \oplus \R^2$ & -- &\cmark & -- & -- & -- & \cmark & (\cmark)
   \\ \hline \xrowht{20pt}
$\frs{5.16} \oplus \R$ & $(f^{23}+f^{46},f^{36},-f^{26},0,0,0)$ & $\mathfrak{h}_3 \oplus \R^2$ &-- & -- & \cmark & \cmark & -- & (\cmark) & \cmark
\\ \hline  \xrowht{20pt}
$\frs{6.25}$ & $(f^{23},f^{36},-f^{26},0,f^{46},0)$ & $\mathfrak{h}_3 \oplus \R^2$ &-- & \cmark & -- & -- & -- & -- & (\cmark)
\\ \hline  \xrowht{20pt}
$\frs{6.44}$ & $(f^{23},f^{36},-f^{26},f^{26}+f^{56},f^{36}-f^{46},0)$ & $\mathfrak{h}_3 \oplus \R^2$ &-- & -- & -- & -- & -- & \cmark & --
\\ \hline \xrowht{20pt}
$\frs{6.51}^{p,0}$ & $(f^{23},pf^{26},-pf^{36},f^{56},-f^{46},0)$, $p>0$ & $\mathfrak{h}_3 \oplus \R^2$ &-- & \cmark & -- & -- & -- & \cmark & (\cmark)
\\ \hline \xrowht{20pt}
$\frs{6.52}^{0,q}$ & $(f^{23},f^{36},-f^{26},qf^{56},-qf^{46},0)$, $q>0$ & $\mathfrak{h}_3 \oplus \R^2$ &-- & \cmark & -- & -- & -- & \cmark & (\cmark)
\\ \hline \xrowht{20pt}
$\frs{6.158}$ & $(f^{24}+f^{35},0,f^{36},0,-f^{56},0)$ & $\mathfrak{h}_5$ &-- & \cmark & -- & -- & -- & \cmark & (\cmark)
\\ \hline \xrowht{20pt}
$\frs{6.159}$ & $(f^{24}+f^{35},0,f^{56},0,-f^{36},0)$ & $\mathfrak{h}_5$ &-- & -- & \cmark & \cmark & -- & (\cmark) & \cmark
\\ \hline \xrowht{20pt}
$\frs{6.162}^1$ & $(f^{24}+f^{35},f^{26},f^{36},-f^{46},-f^{56},0)$ & $\mathfrak{h}_5$ &-- & -- & \cmark & -- & -- & (\cmark) & --
\\ \hline \xrowht{20pt}
$\frs{6.164}^p$ & $(f^{24}+f^{35},pf^{26},f^{56},-pf^{46},-f^{36},0)$, $p>0$ & $\mathfrak{h}_5$ &-- & \cmark & -- & -- & -- & \cmark & (\cmark)
\\ \hline \xrowht{20pt}
$\frs{6.165}^p$ & $(f^{24}+f^{35},pf^{26}+f^{36},-f^{26}+pf^{36},-pf^{46}+f^{56},-f^{46}-pf^{56},0)$, $p>0$ & $\mathfrak{h}_5$ &-- & -- & \cmark & -- & -- & (\cmark) & --
\\ \hline \xrowht{20pt}
$\frs{6.166}^p$ & $(f^{24}+f^{35},f^{46},pf^{56},-f^{26},-pf^{36},0)$, $0<|p| \leq 1$ & $\mathfrak{h}_5$ &-- & -- & \cmark & \cmark & -- & (\cmark) & $p \neq 1$
\\ \hline \xrowht{20pt}
$\frs{6.167}$ & $(f^{24}+f^{35},f^{36},-f^{26},f^{26}+f^{56},f^{36}-f^{46},0)$ & $\mathfrak{h}_5$ &-- & -- & \cmark & -- & -- & (\cmark) & --
\\ \hline \xrowht{20pt}
$\mathfrak{s}_{6.145}^{0}$ & $(f^{35}+f^{26},f^{45}-f^{16},f^{46},-f^{36},0,0)$ &$\mathfrak{n}_{5.1}$ &-- & -- & \cmark & -- & -- & (\cmark) & --
\\ \hline \xrowht{20pt}
$\mathfrak{s}_{6.147}^{0}$ & $(f^{35}+f^{26}+f^{36},f^{45}-f^{16}+f^{46},f^{46},-f^{36},0,0)$ &$\mathfrak{n}_{5.1}$   &-- & -- & \cmark & -- & -- & (\cmark) & --
\\ \hline \xrowht{20pt}
$\mathfrak{s}_{6.152}$ & $\left( f^{35}+f^{26},f^{34}-f^{16}+f^{56},f^{45},-f^{56},f^{46},0 \right)$ &$\mathfrak{n}_{5.2}$  &-- & -- & -- & -- & -- & \cmark & --
\\ \hline \xrowht{20pt}
$\mathfrak{s}_{6.154}^0$ & $\left( f^{35}+f^{26},f^{34}-f^{16},f^{45},-f^{56},f^{46},0 \right)$ &$\mathfrak{n}_{5.2}$   & -- &-- & -- & -- & \cmark & \cmark & --
\\ \hline
\end{tabular}}
\caption{Six-dimensional strongly unimodular almost nilpotent Lie algebras admitting complex structures.} \label{table-str}
\end{center}
\end{table}

\newpage

\begin{table}[H]
\begin{center}
\addtolength{\leftskip} {-2cm}
\addtolength{\rightskip}{-2cm}
\scalebox{1}{
\begin{tabular}{|l|l|l|}
\hline \xrowht{15pt}
Lie algebra & Complex structure equations & Conditions\\
\hline \hline
\xrowht{50pt}
$\mathfrak{h}_3 \oplus \frs{3.3}^0$ & $ \begin{cases} 
                                        d\alpha^1=i\varepsilon \alpha^{3 \overline{3}}, \\
                                        d\alpha^2=-\alpha^2 \wedge (\alpha^1-\alpha^{\overline{1}}), \\
                                        d\alpha^3=0.
                                        \end{cases}  $                                                        & $\varepsilon \in \{-1,1\}$ \\
\hline
\xrowht{50pt}
$\frs{4.7} \oplus \R^2$ & $ \begin{cases} 
                            d\alpha^1=i\varepsilon \alpha^{2 \overline{2}}, \\
                            d\alpha^2=-\alpha^2 \wedge (\alpha^1-\alpha^{\overline{1}}), \\
                            d\alpha^3=0.
                            \end{cases}  $                                                        & $\varepsilon \in \{-1,1\}$ \\
\hline
\xrowht{50pt}
$\frs{6.44}$ & $ \begin{cases} 
                 d\alpha^1=i \varepsilon \alpha^{3 \overline{3}}, \\
                 d\alpha^2=-(\alpha^2+i\alpha^3) \wedge (\alpha^1-\alpha^{\overline{1}}), \\
                 d\alpha^3=-\alpha^3 \wedge (\alpha^1-\alpha^{\overline{1}}).
                 \end{cases}  $                                                                  & $\varepsilon \in \{-1,1\}$ \\
\hline
\xrowht{50pt}
$\frs{6.52}^{0,q}$, $q>0$ & $ \begin{cases} 
                                      d\alpha^1=i\delta \alpha^{2 \overline{2}}, \\
                                      d\alpha^2=-\alpha^2 \wedge (\alpha^1-\alpha^{\overline{1}}), \\
                                      d\alpha^3=-\varepsilon q \alpha^3 \wedge (\alpha^1-\alpha^{\overline{1}}).
                                      \end{cases}  $                                                                & $\varepsilon \in \{-1,1\}$ \\
\hline
\xrowht{50pt}
$\frs{6.159}$             & $\begin{cases} 
                             d\alpha^1=  i \delta \alpha^{2 \overline{2}} + i \varepsilon \alpha^{3 \overline{3}}, \\
                             d\alpha^2=-\alpha^2 \wedge (\alpha^1-\alpha^{\overline{1}}), \\
                             d\alpha^3= 0.
                             \end{cases}  $                                                                & $\delta,\varepsilon \in \{-1,1\}$ \\
\hline
\xrowht{50pt}
$\frs{6.162}^1$             & $\begin{cases} 
                             d\alpha^1=  \alpha^{2 \overline{3}} -  \alpha^{3 \overline{2}}, \\
                             d\alpha^2=-i\alpha^2 \wedge (\alpha^1-\alpha^{\overline{1}}), \\
                             d\alpha^3= i\alpha^3 \wedge (\alpha^1-\alpha^{\overline{1}}).
                             \end{cases}  $                                                                & -- \\
\hline
\xrowht{50pt}
$\frs{6.165}^p$, $p>0$     & $\begin{cases} 
                             d\alpha^1=  \alpha^{2 \overline{3}} -  \alpha^{3 \overline{2}}, \\
                             d\alpha^2=-(1+ip)\,\alpha^2 \wedge (\alpha^1-\alpha^{\overline{1}}), \\
                             d\alpha^3= -(1-ip)\,\alpha^3 \wedge (\alpha^1-\alpha^{\overline{1}}).
                             \end{cases}  $                                                                & -- \\
\hline
\xrowht{50pt}
$\frs{6.166}^p$, $0<|p|\leq 1$ & $\begin{cases} 
                             d\alpha^1=  i\delta \alpha^{2 \overline{2}}  +i\varepsilon \delta \alpha^{3 \overline{3}}, \\
                             d\alpha^2=- \alpha^2 \wedge (\alpha^1-\alpha^{\overline{1}}), \\
                             d\alpha^3= -\varepsilon p \alpha^3 \wedge (\alpha^1-\alpha^{\overline{1}}).
                             \end{cases}  $                                                                & $\delta,\varepsilon \in \{-1,1\}$ \\
\hline
\xrowht{50pt}
$\frs{6.167}$ & $\begin{cases} 
                 d\alpha^1=  \varepsilon \alpha^{2 \overline{3}} - \varepsilon \alpha^{3 \overline{2}} +ix  \alpha^{3 \overline{3}}, \\
                 d\alpha^2=-(\alpha^2+i\alpha^3) \wedge (\alpha^1-\alpha^{\overline{1}}), \\
                 d\alpha^3=-\alpha^3 \wedge (\alpha^1-\alpha^{\overline{1}}).
                 \end{cases}  $                                                                & $\varepsilon \in \{-1,1\}$, $x \in \R$ \\
\hline
\end{tabular}}
\caption{Complex structures satisfying $J\mathfrak{n}^1 \not\subset \mathfrak{n}$, up to automorphisms, on six-dimensional strongly unimodular almost nilpotent Lie algebras with nilradical having one-dimensional commutator.} \label{table-cpxheis}
\end{center}
\end{table}

\newpage

\begin{table}[H]
\begin{center}
\addtolength{\leftskip} {-2cm}
\addtolength{\rightskip}{-2cm}
\scalebox{0.65}{
\begin{tabular}{|l|l|l|}
\hline \xrowht{15pt}
Lie algebra & Complex structure equations & Conditions\\
\hline
\hline \xrowht{50pt}
$\frs{3.3}^0 \oplus \R^3$ & $\begin{cases} 
                              d\alpha^1=0, \\
                              d\alpha^2=-i\alpha^2 \wedge (\alpha^1-\alpha^{\overline{1}})+z \alpha^{1 \overline{1}}, \\
                              d\alpha^3=0.
                              \end{cases}  $                                                        & $z \in \C$ \\
\hline \xrowht{50pt}
$\frs{4.3}^{-\frac{1}{2},-\frac{1}{2}} \oplus \R^3$ & $\begin{cases} 
                                                      d\alpha^1=i \alpha^{1 \overline{1}}, \\
                                                      d\alpha^2=\frac{i}{2} \alpha^2 \wedge (\alpha^1-\alpha^{\overline{1}})+z_1 \alpha^{1 \overline{1}}, \\
                                                      d\alpha^3=z_2 \alpha^{1 \overline{1}}.
                                                      \end{cases}  $                                                        & $z_1,z_2 \in \C$ \\
\hline \xrowht{50pt}
$\frs{4.5}^{p,-\frac{p}{2}} \oplus \R^2$, $p>0$ & $\begin{cases} 
                                            d\alpha^1=i p \alpha^{1 \overline{1}}, \\
                                            d\alpha^2=-\left(1+\frac{i}{2}p\right) \alpha^2 \wedge (\alpha^1-\alpha^{\overline{1}})+z_1 \alpha^{1 \overline{1}}, \\
                                            d\alpha^3=z_2 \alpha^{1 \overline{1}}.
                                            \end{cases}$                                                        & $z_1,z_2 \in \C$ \\
\hline \xrowht{50pt}
$\frs{5.4}^{0} \oplus \R$ & $\begin{cases} 
                                                      d\alpha^1=0, \\
                                                      d\alpha^2= -\alpha^2 \wedge (\alpha^1-\alpha^{\overline{1}})+z_1 \alpha^{1 \overline{1}}, \\
                                                      d\alpha^3=z_2 \alpha^{1 \overline{1}}.
                                                      \end{cases}  $                                                        & $z_1,z_2 \in \C$, $z_2 \neq 0$ \\
\hline \xrowht{50pt}
$\frs{5.8}^{0} \oplus \R$ & $\begin{cases} 
                                                      d\alpha^1=0, \\
                                                      d\alpha^2= -(\alpha^2+i\alpha^3) \wedge (\alpha^1-\alpha^{\overline{1}})+z_1 \alpha^{1 \overline{1}}, \\
                                                      d\alpha^3=- \alpha^3 \wedge (\alpha^1-\alpha^{\overline{1}}) + z_2 \alpha^{1 \overline{1}}.
                                                      \end{cases}  $                                                        & $z_1,z_2 \in \C$ \\
\hline \xrowht{50pt}
$\frs{5.9}^{1,-1,-1} \oplus \R$ & $\begin{cases} 
                                                      d\alpha^1=0, \\
                                                      d\alpha^2= -\alpha^2 \wedge (\alpha^1-\alpha^{\overline{1}})+z_1 \alpha^{1 \overline{1}}, \\
                                                      d\alpha^3= i \alpha^3 \wedge (\alpha^1-\alpha^{\overline{1}}) + z_2 \alpha^{1 \overline{1}}.
                                                      \end{cases}  $                                                        & $z_1,z_2 \in \C$ \\
\hline \xrowht{50pt}
$\frs{5.11}^{p,p,-p} \oplus \R$, $p>0$ & $\begin{cases} 
                                                      d\alpha^1=0, \\
                                                      d\alpha^2= -ip\alpha^2 \wedge (\alpha^1-\alpha^{\overline{1}})+z_1 \alpha^{1 \overline{1}}, \\
                                                      d\alpha^3= (-1+ip) \alpha^3 \wedge (\alpha^1-\alpha^{\overline{1}}) + z_2 \alpha^{1 \overline{1}}.
                                                      \end{cases}  $                                                        & $z_1,z_2 \in \C$ \\
\hline \xrowht{50pt}
$\frs{5.13}^{p,-p,r} \oplus \R$, $r>0$ & $\begin{cases} 
                                                      d\alpha^1=0, \\
                                                      d\alpha^2= -(1+ip) \alpha^2 \wedge (\alpha^1-\alpha^{\overline{1}})+z_1 \alpha^{1 \overline{1}}, \\
                                                      d\alpha^3= (-\varepsilon r + ip) \alpha^3 \wedge (\alpha^1-\alpha^{\overline{1}}) + z_2 \alpha^{1 \overline{1}}.
                                                      \end{cases}  $                                                        & $z_1,z_2 \in \C$, $\varepsilon \in \{-1,1\}$ \\
\hline \xrowht{50pt}
$\frs{6.14}^{-\frac{1}{4},-\frac{1}{4}}$ & $\begin{cases} 
                                                      d\alpha^1=i \alpha^{1\overline{1}}, \\
                                                      d\alpha^2= i \left(\frac{1}{4} \alpha^2 - \alpha^3 \right) \wedge (\alpha^1-\alpha^{\overline{1}})+z_1 \alpha^{1 \overline{1}}, \\
                                                      d\alpha^3= \frac{i}{4} \alpha^3 \wedge (\alpha^1-\alpha^{\overline{1}}) + z_2 \alpha^{1 \overline{1}}.
                                                      \end{cases}  $                                                        & $z_1,z_2 \in \C$ \\
\hline \xrowht{50pt}
$\frs{6.16}^{p,-4p}$, $p<0$ & $\begin{cases} 
                                                      d\alpha^1=-4ip \alpha^{1\overline{1}}, \\
                                                      d\alpha^2=- i \left( (p-i) \alpha^2 + \alpha^3 \right) \wedge (\alpha^1-\alpha^{\overline{1}})+z_1 \alpha^{1 \overline{1}}, \\
                                                      d\alpha^3= -(1+ip) \alpha^3 \wedge (\alpha^1-\alpha^{\overline{1}}) + z_2 \alpha^{1 \overline{1}}.
                                                      \end{cases}  $                                                        & $z_1,z_2 \in \C$ \\
\hline \xrowht{50pt}
$\frs{6.17}^{1,q,q,-2(1+q)}$, $0<|q| \leq 1$, $q \neq -1$ & $\begin{cases} 
                                                      d\alpha^1=-2i(1+q) \alpha^{1\overline{1}}, \\
                                                      d\alpha^2= -i \alpha^2 \wedge (\alpha^1-\alpha^{\overline{1}})+z_1 \alpha^{1 \overline{1}}, \\
                                                      d\alpha^3= -iq \alpha^3 \wedge (\alpha^1-\alpha^{\overline{1}}) + z_2 \alpha^{1 \overline{1}}.
                                                      \end{cases}  $                                                        & $z_1,z_2 \in \C$, $z_2=0$ if $q=-\frac{2}{3}$ \\
\hline \xrowht{50pt}
$\frs{6.18}^{1,-\frac{3}{2},-\frac{3}{2}}$ & $\begin{cases} 
                                                      d\alpha^1=i \alpha^{1\overline{1}}, \\
                                                      d\alpha^2= -i \alpha^2  \wedge (\alpha^1-\alpha^{\overline{1}})+z_1 \alpha^{1 \overline{1}}, \\
                                                      d\alpha^3= \frac{3}{2} i \alpha^3 \wedge (\alpha^1-\alpha^{\overline{1}}) + z_2 \alpha^{1 \overline{1}}.
                                                      \end{cases}  $                                                        & $z_1,z_2 \in \C$, $z_1 \neq 0$ \\
\hline \xrowht{50pt}
$\frs{6.19}^{p,p,q,-p-\frac{q}{2}}$, $p,q \neq 0$ & $\begin{cases} 
                                                      d\alpha^1=iq \alpha^{1\overline{1}}, \\
                                                      d\alpha^2= -ip  \alpha^2 \wedge (\alpha^1-\alpha^{\overline{1}})+z_1 \alpha^{1 \overline{1}}, \\
                                                      d\alpha^3= i \left(p+\frac{q}{2} + i \right) \alpha^3 \wedge (\alpha^1-\alpha^{\overline{1}}) + z_2 \alpha^{1 \overline{1}}.
                                                      \end{cases}  $                                                        & $z_1,z_2 \in \C$, $z_1=0$ if $p=q$ \\
\hline \xrowht{50pt}
$\frs{6.20}^{p,p,-\frac{3}{2}p}$, $p>0$ & $\begin{cases} 
                                                      d\alpha^1=ip \alpha^{1\overline{1}}, \\
                                                      d\alpha^2= -ip \alpha^2 \wedge (\alpha^1-\alpha^{\overline{1}})+z_1 \alpha^{1 \overline{1}}, \\
                                                      d\alpha^3= -\left( 1 + \frac{3}{2}ip \right) \alpha^3 \wedge (\alpha^1-\alpha^{\overline{1}}) + z_2 \alpha^{1 \overline{1}}.
                                                      \end{cases}  $                                                        & $z_1,z_2 \in \C$, $z_1 \neq 0$ \\
\hline \xrowht{50pt}
$\frs{6.21}^{p,q,r,-2(p+q)}$, $q \neq -p$, $r>0$ & $\begin{cases} 
                                                      d\alpha^1=-2i(p+q) \alpha^{1\overline{1}}, \\
                                                      d\alpha^2= -(\varepsilon + ip) \alpha^2 \wedge (\alpha^1-\alpha^{\overline{1}})+z_1 \alpha^{1 \overline{1}}, \\
                                                      d\alpha^3= -(r+ iq) \alpha^3 \wedge (\alpha^1-\alpha^{\overline{1}}) + z_2 \alpha^{1 \overline{1}}.
                                                      \end{cases}  $                                                        & $z_1,z_2 \in \C$, $\varepsilon \in \{-1,1\}$ \\
\hline
\end{tabular}}
\caption{Hermitian structures up to equivalence on six-dimensional unimodular almost abelian Lie algebras, with Hermitian metric \eqref{om_almab}.} \label{table-cpxab}
\end{center}
\end{table}

\newpage

\begin{table}[H]
\begin{center}
\addtolength{\leftskip} {-2cm}
\addtolength{\rightskip}{-2cm}
\scalebox{0.45}{
\begin{tabular}{|l|l|l|c|c|}
\hline \xrowht{15pt}
Lie algebra & Conditions & Structure equations & Structure type & Example \\ \hline
\hline \xrowht{48pt}
$\frs{3.3} \oplus \R^3$ & -- & $(f^{26},-f^{16},0,0,0,0)$ & K\"ahler & \mlines{$Jf_1=f_2$, $Jf_3=f_4$, $Jf_5=f_6$ \\ $\omega=f^{12}+f^{34}+f^{56}$ \medskip \\ $\mu=f^6$} \\
\hline \xrowht{40pt}
$\frs{4.3}^{-\frac{1}{2},-\frac{1}{2}} \oplus \R^2$ & \mlines{--} & $\left(f^{16},-\frac{1}{2}f^{26},-\frac{1}{2}f^{36},0,0,0\right)$ & \mlines{SKT \\ LCB} & \mlines{$Jf_1=f_6$, $Jf_2=f_3$, $Jf_4=f_5$ \\ $\omega=f^{16}+f^{23}+f^{45}$} \\
\hline \xrowht{40pt}
$\frs{4.5}^{p,-\frac{p}{2}} \oplus \R^2$ & $p>0$ & $\left(pf^{16},-\frac{p}{2}f^{26}+f^{36},-f^{26}-\frac{p}{2}f^{36},0,0,0\right)$ & \mlines{SKT \\ LCB} & \mlines{$Jf_1=f_6$, $Jf_2=f_3$, $Jf_4=f_5$ \\ $\omega=f^{16}+f^{23}+f^{45}$} \\
\hline \xrowht{48pt}
$\frs{5.4}^{0} \oplus \R$ & -- & $\left(f^{26},0,f^{46},-f^{36},0,0\right)$ & \mlines{SKT \\ LCSKT} & \mlines{$Jf_1=f_5$, $Jf_2=f_6$, $Jf_3=f_4$ \\ $\omega=f^{15}+f^{26}+f^{34}$ \medskip \\ $\mu=f^6$} \\
\hline \xrowht{40pt}
$\frs{5.8}^{0} \oplus \R$ & -- & $\left(f^{26}+f^{36},-f^{16}+f^{46},f^{46},-f^{36},0,0\right)$ & \mlines{Balanced} & \mlines{$Jf_1=f_2$, $Jf_3=f_4$, $Jf_5=f_6$ \\ $\omega=f^{12}+f^{34}+f^{56}$} \\
\hline \xrowht{40pt}
$\frs{5.9}^{1,-1,-1} \oplus \R$ & -- & $\left(f^{16},f^{26},-f^{36},-f^{46},0,0\right)$ & \mlines{Balanced} & \mlines{$Jf_1=f_2$, $Jf_3=f_4$, $Jf_5=f_6$ \\ $\omega=f^{12}+f^{34}+f^{56}$} \\
\hline \xrowht{40pt}
$\frs{5.11}^{p,-p,r} \oplus \R$ & $p>0$ & $\left(pf^{16},pf^{26},-pf^{36}+f^{46},-f^{36}-pf^{46},0,0\right)$ & \mlines{Balanced} & \mlines{$Jf_1=f_2$, $Jf_3=f_4$, $Jf_5=f_6$ \\ $\omega=f^{12}+f^{34}+f^{56}$} \\
\hline \xrowht{48pt}
\multirow{2}{*}{\vspace{-2em}$\frs{5.13}^{p,-p,r} \oplus \R$} & $p=0$, $r>0$ & $(f^{26},-f^{16},rf^{46},-rf^{36},0,0)$ & \mlines{K\"ahler} & \mlines{$Jf_1=f_2$, $Jf_3=f_4$, $Jf_5=f_6$ \\ $\omega=f^{12}+f^{34}+f^{56}$ \medskip \\ $\mu=f^6$} \\
\cline{2-5} \xrowht{40pt}
 & $p \neq 0$, $r>0$ & $(pf^{16}+f^{26},-f^{16}+pf^{26},-pf^{36}+rf^{46},-rf^{36}-pf^{46},0,0,0)$ & \mlines{Balanced} & \mlines{$Jf_1=f_2$, $Jf_3=f_4$, $Jf_5=f_6$ \\ $\omega=f^{12}+f^{34}+f^{56}$} \\
\hline \xrowht{40pt}
$\frs{6.14}^{-\frac{1}{4},-\frac{1}{4}}$ & -- & $\left(-\frac{1}{4}f^{16}+f^{26},-\frac{1}{4}f^{26},-\frac{1}{4}f^{36}+f^{46},-\frac{1}{4}f^{46},0,0\right)$ & \mlines{LCB \\ 1\textsuperscript{st}-Gauduchon} & \mlines{$Jf_1=f_3$, $Jf_2=f_4$, $Jf_5=f_6$ \\ $\omega=f^{13}+4f^{24}+f^{56}$} \\
\hline \xrowht{40pt}
$\frs{6.16}^{p,4p}$ & $p<0$ & $\left(pf^{16}+f^{26}+f^{36},-f^{16}+pf^{26}+f^{46},pf^{36}+f^{46},-f^{36}+pf^{46},-4pf^{56},0\right)$ & \mlines{LCB \\ 1\textsuperscript{st}-Gauduchon} & \mlines{$Jf_1=f_2$, $Jf_3=f_4$, $Jf_5=f_6$ \\ $\omega=4p^2f^{12}+f^{34}+f^{56}$} \\
\hline \xrowht{40pt}
\multirow{3}{*}{\vspace{-7em}$\frs{6.17}^{1,q,q,-2(1+q)}$} & $-1<q<0$ & $(f^{16},f^{26},qf^{36},qf^{46},-2(1+q)f^{56},0)$ & LCB & \mlines{$Jf_1=f_2$, $Jf_3=f_4$, $Jf_5=f_6$ \\ $\omega=f^{12}+f^{34}+f^{56}$} \\
\cline{2-5} \xrowht{40pt}
 & $0<q<1$ & $(f^{16},f^{26},qf^{36},qf^{46},-2(1+q)f^{56},0)$ & \mlines{ LCB \\ 1\textsuperscript{st}-Gauduchon} & \mlines{$Jf_1=f_2$, $Jf_3=f_4$, $Jf_5=f_6$ \\ $\omega=f^{12}+4f^{13}+4f^{24}+4\frac{(1+q)^2}{q}f^{34}+2f^{56}$} \\
\cline{2-5} \xrowht{48pt}
 & $q=1$ & $(f^{16},f^{26},f^{36},f^{46},-4f^{56},0)$ & \mlines{ LCK \\ LCSKT \\ 1\textsuperscript{st}-Gauduchon} & \mlines{$Jf_1=f_2$, $Jf_3=f_4$, $Jf_5=f_6$ \\ $\omega=f^{12}+4f^{13}+4f^{24}+16f^{34}+2f^{56}$ \medskip \\ $\mu=2f^6$} \\
\hline \xrowht{40pt}
$\frs{6.18}^{1,-\frac{3}{2},-\frac{3}{2}}$ & -- & $\left(f^{16}+f^{26},f^{26},f^{36},-\frac{3}{2}f^{46},-\frac{3}{2}f^{56},0\right)$ & Complex & $Jf_1=f_3$, $Jf_2=f_6$, $Jf_4=f_5$ \\
\hline \xrowht{40pt}
\multirow{4}{*}{\vspace{-11em}$\frs{6.19}^{p,p,q,-p-\frac{q}{2}}$} & $q=-2p \neq 0$ & $(pf^{16},pf^{26},-2pf^{36},f^{56},-f^{46},0)$ & \mlines{SKT \\ LCB} & \mlines{$Jf_1=f_2$, $Jf_3=f_6$, $Jf_4=f_5$ \\ $\omega=f^{12}+f^{36}+f^{45}$} \\
\cline{2-5} \xrowht{48pt}
 & $q=-4p \neq 0$ & $(pf^{16},pf^{26},-4pf^{36},pf^{46} + f^{56},-f^{46}+pf^{56},0)$ & \mlines{ LCK \\ LCSKT \\ 1\textsuperscript{st}-Gauduchon} & \mlines{$Jf_1=f_2$, $Jf_3=f_6$, $Jf_4=f_5$ \\ $\omega=f^{12}+f^{36}+f^{45}$ \medskip \\ $\mu=2pf^6$} \\
\cline{2-5} \xrowht{40pt}
 & $p(2p+q)<0$, $q \neq -4p$ & $(pf^{16},pf^{26},qf^{36},-(p+\frac{q}{2})f^{46} + f^{56},-f^{46}-(p+\frac{q}{2})f^{56},0)$ & \mlines{ LCB \\ 1\textsuperscript{st}-Gauduchon} & \mlines{$Jf_1=f_2$, $Jf_3=f_6$, $Jf_4=f_5$ \\ $\omega=f^{12}+8f^{14}+8f^{25}+4f^{36}-8\frac{q^2+4}{p(2p+q)}f^{45}$} \\
\cline{2-5} \xrowht{40pt}
 & $p(2p+q) > 0$, $q \neq 0$ & $(pf^{16},pf^{26},qf^{36},-(p+\frac{q}{2})f^{46} + f^{56},-f^{46}-(p+\frac{q}{2})f^{56},0)$ & LCB & \mlines{$Jf_1=f_2$, $Jf_3=f_6$, $Jf_4=f_5$ \\ $\omega=f^{12}+f^{36}+f^{45}$} \\
\hline \xrowht{40pt}
$\frs{6.20}^{p,p,-\frac{3}{2}p}$ & $p>0$ & $\left(pf^{16}+f^{26},pf^{26},pf^{36},-\frac{3}{2}pf^{46}+f^{56},-f^{56}-\frac{3}{2}pf^{56},0\right)$ & Complex & $Jf_1=f_3$, $Jf_2=f_6$, $Jf_4=f_5$ \\
\hline \xrowht{40pt}
\multirow{5}{*}{\vspace{-14em}$\frs{6.21}^{p,q,r,-2(p+q)}$} & $q=0$, $p \neq 0$, $r>0$ & $(pf^{16}+f^{26},-f^{16}+pf^{26},rf^{46},-rf^{36},-2pf^{56},0)$ & \mlines{SKT \\ LCB} & \mlines{$Jf_1=f_2$, $Jf_3=f_4$, $Jf_5=f_6$ \\ $\omega=f^{12}+f^{34}+f^{56}$} \\
\cline{2-5} \xrowht{48pt}
 & \multirow{2}{*}{\vspace{-4em}$q=p \neq 0$, $r>0$}  & \multirow{2}{*}{\vspace{-4em}$(pf^{16}+f^{26},-f^{16}+pf^{26},pf^{36}+rf^{46},-rf^{36}+pf^{46},-4pf^{56},0)$} & \mlines{LCK \\ LCSKT} & \mlines{$Jf_1=f_2$, $Jf_3=f_4$, $Jf_5=f_6$ \\ $\omega=f^{12}+f^{34}+f^{56}$ \medskip \\ $\mu=2pf^6$} \\
\cline{4-5} \xrowht{40pt}
 & & & 1\textsuperscript{st}-Gauduchon & \mlines{$Jf_1=f_2$, $Jf_3=f_4$, $Jf_5=f_6$ \\ $\omega=f^{12}+2f^{13}+2f^{24}+\frac{4p^2+(r-1)^2}{p^2}f^{34}+f^{56}$} \\
\cline{2-5} \xrowht{40pt}
 & $pq>0$, $|p|>|q|$, $r>0$ & $(pf^{16}+f^{26},-f^{16}+pf^{26},qf^{36}+rf^{46},-rf^{36}+qf^{46},-2(p+q)f^{56},0)$ & \mlines{ LCB \\ 1\textsuperscript{st}-Gauduchon} & \mlines{$Jf_1=f_2$, $Jf_3=f_4$, $Jf_5=f_6$ \\ $\omega=f^{12}+2f^{13}+2f^{24}+\frac{(p+q)^2+(r-1)^2}{pq}f^{34}+f^{56}$} \\
\cline{2-5} \xrowht{40pt}
 & $pq<0$, $|p|>|q|$, $r>0$ & $(pf^{16}+f^{26},-f^{16}+pf^{26},qf^{36}+rf^{46},-rf^{36}+qf^{46},-2(p+q)f^{56},0)$ & LCB & \mlines{$Jf_1=f_2$, $Jf_3=f_4$, $Jf_5=f_6$ \\ $\omega=f^{12}+f^{34}+f^{56}$} \\
\hline
\end{tabular}}
\caption{Examples of complex and special Hermitian structures on six-dimensional unimodular almost abelian Lie algebras.} \label{table-exab}
\end{center}
\end{table}

\newpage

\begin{table}[H]
\begin{center}
\addtolength{\leftskip} {-2cm}
\addtolength{\rightskip}{-2cm}
\scalebox{0.45}{
\begin{tabular}{|l|l|l|c|c|}
\hline \xrowht{15pt}
Lie algebra & Conditions & Structure equations & Structure type & Example \\ \hline
\hline \xrowht{48pt}
$\mathfrak{h}_3 \oplus \frs{3.3}^0$ & -- & $\left( f^{23},0,0,f^{56},-f^{46},0 \right)$ & \mlines{SKT \\ LCSKT \\ LCB} & \mlines{$Jf_1=f_6$, $Jf_2=f_3$, $Jf_4=f_5$ \\ $\omega=f^{16}+f^{23}+f^{45}$ \medskip\\ $\mu=f^2$} \\
\hline \xrowht{40pt}
$\frs{4.6} \oplus \R^2$ & -- & $\left( f^{23},f^{26},-f^{36},0,0,0 \right)$ & \mlines{SKT \\ LCB} & \mlines{$Jf_1=f_2$, $Jf_3=f_6$, $Jf_4=f_5$ \\ $\omega=f^{12}+f^{36}+f^{45}$} \\
\hline \xrowht{40pt}
$\frs{4.7} \oplus \R^2$ & -- & $\left( f^{23},f^{36},-f^{26},0,0,0 \right)$ & \mlines{SKT \\ LCB} & \mlines{$Jf_1=f_6$, $Jf_2=f_3$, $Jf_4=f_5$ \\ $\omega=f^{16}+f^{23}+f^{45}$} \\
\hline \xrowht{40pt}
\multirow{3}{*}{\vspace{-7em}$\frs{5.16} \oplus \R$} & \multirow{3}{*}{\vspace{-7em}--} & \multirow{3}{*}{\vspace{-7em}$\left( f^{23}+f^{46},f^{36},-f^{26},0,0,0 \right)$} & \mlines{Balanced} & \mlines{$Jf_1=f_5$, $Jf_2=f_3$, $Jf_4=-f_6$ \\ $\omega=f^{15}+f^{23}-f^{46}$} \\
\cline{4-5} \xrowht{40pt}
& & & LCK & \mlines{$Jf_1=f_5$, $Jf_2=f_3$, $Jf_4=f_6$ \\ $\omega=f^{15}+f^{23}+f^{46}$} \\
\cline{4-5} \xrowht{40pt}
& & & 1\textsuperscript{st}-Gauduchon  & \mlines{$Jf_1=f_5$, $Jf_2=f_3$, $Jf_4=f_6$ \\ $\omega=f^{13}+f^{15}+2f^{23}+f^{25}+f^{46}$} \\
\hline \xrowht{40pt}
$\frs{6.25}$ & -- & $\left( f^{23},f^{36},-f^{26},0,f^{46},0 \right)$ & \mlines{SKT} & \mlines{$Jf_1=f_5$, $Jf_2=f_3$, $Jf_4=f_6$ \\ $\omega=f^{15}+f^{23}+f^{46}$} \\
\hline \xrowht{40pt}
$\frs{6.44}$ & -- & $\left( f^{23},f^{36},-f^{26},f^{26}+f^{56},f^{36}-f^{46},0 \right)$ & \mlines{LCB} & \mlines{$Jf_1=f_6$, $Jf_2=f_3$, $Jf_4=f_5$ \\ $\omega=f^{16}+f^{23}+f^{45}$} \\
\hline \xrowht{40pt}
$\frs{6.51}^{p,0}$ & $p>0$ & $\left( f^{23},pf^{26},-pf^{36},f^{56},-f^{46},0 \right)$ & \mlines{SKT \\ LCB} & \mlines{$Jf_1=pf_2$, $Jf_3=f_6$, $Jf_4=f_5$ \\ $\omega=pf^{12}+f^{36}+f^{45}$} \\
\hline \xrowht{40pt}
$\frs{6.52}^{0,q}$ & $q>0$ & $\left( f^{23},f^{36},-f^{26},qf^{56},-qf^{46},0 \right)$ & \mlines{SKT \\ LCB} & \mlines{$Jf_1=f_6$, $Jf_2=f_3$, $Jf_4=f_5$ \\ $\omega=f^{16}+f^{23}+f^{45}$} \\
\hline \xrowht{40pt}
\multirow{2}{*}{\vspace{-5em}$\frs{6.158}$} & \multirow{2}{*}{\vspace{-5em}--} & \multirow{2}{*}{\vspace{-5em}$\left( f^{24}+f^{35},0,f^{36},0,-f^{56},0 \right)$} & \mlines{SKT} & \mlines{$Jf_1=f_3$, $Jf_2=f_4$, $Jf_5=f_6$ \\ $\omega=f^{13}+f^{24}+f^{56}$} \\
\cline{4-5} \xrowht{55pt}
& & & LCB & \mlines{$J=\left( \begin{smallmatrix}
0 & 0 & -1 & 0 & 0 & 1 \\
0 & 0 & 0 & -1 & 0 & 0 \\
1 & 0 & 0 & 0 & 1 & 0 \\
0 & 1 & 0 & 0 & 0 & 0 \\
0 & 0 & 0 & 0 & 0 & -1 \\
0 & 0 & 0 & 0 & 1 & 0
 \end{smallmatrix} \right)$ \vspace{4pt}\\ $\omega=f^{13}+f^{24}-f^{35}+f^{56}$} \\
\hline \xrowht{40pt}
\multirow{3}{*}{\vspace{-7em}$\frs{6.159}$} & \multirow{3}{*}{\vspace{-7em}--} & \multirow{3}{*}{\vspace{-7em}$\left( f^{24}+f^{35},0,f^{56},0,-f^{36},0 \right)$} & \mlines{Balanced} & \mlines{$Jf_1=f_6$, $Jf_2=-f_4$, $Jf_3=f_5$ \\ $\omega=f^{16}-f^{24}+f^{35}$} \\
\cline{4-5} \xrowht{40pt}
& & & LCK & \mlines{$Jf_1=f_6$, $Jf_2=f_4$, $Jf_3=f_5$ \\ $\omega=f^{16}+f^{24}+f^{35}$} \\
\cline{4-5} \xrowht{40pt}
& & & 1\textsuperscript{st}-Gauduchon  & \mlines{$Jf_1=f_6$, $Jf_2=f_4$, $Jf_3=f_5$ \\ $\omega=f^{16}-f^{23}+2f^{24}+f^{35}-f^{45}$} \\
\hline \xrowht{40pt}
$\frs{6.162}^{1}$ & -- & $\left( f^{24}+f^{35},f^{26},f^{36},-f^{46},-f^{56},0 \right)$ & \mlines{Balanced} & \mlines{$Jf_1=f_6$, $Jf_2=f_3$, $Jf_4=f_5$ \\ $\omega=f^{16}+f^{23}+f^{45}$} \\
\hline \xrowht{40pt}
\multirow{2}{*}{\vspace{-5em}$\frs{6.164}^p$} & \multirow{2}{*}{\vspace{-5em}$p>0$} & \multirow{2}{*}{\vspace{-5em}$\left( f^{24}+f^{35},pf^{26},f^{56},-pf^{46},-f^{36},0 \right)$} & \mlines{SKT} & \mlines{$Jf_1=pf_2$, $Jf_3=f_5$, $Jf_4=f_6$ \\ $\omega=pf^{12}+f^{35}+f^{46}$} \\
\cline{4-5} \xrowht{58pt}
& & & LCB & \mlines{$J=\left( \begin{smallmatrix}
0 & -\frac{1}{p} & 0 & 0 & 0 & \frac{1}{p} \\
a & 0 & 0 & 1 & 0 & 0 \\
0 & 0 & 0 & 0 & -1 & 0 \\
0 & 0 & 0 & 0 & 0 & -1 \\
0 & 0 & 1 & 0 & 0 & 0 \\
0 & 0 & 0 & 1 & 0 & 0
 \end{smallmatrix} \right)$ \vspace{4pt}\\ $\omega=pf^{12}-f^{24}+f^{35}+f^{46}$} \\
\hline \xrowht{40pt}
$\frs{6.165}^{p}$ & $p>0$ & $\left( f^{24}+f^{35},pf^{26}+f^{36},-f^{26}+pf^{36},-pf^{46}+f^{56},-f^{46}-pf^{56},0 \right)$ & \mlines{Balanced} & \mlines{$Jf_1=f_6$, $Jf_2=f_3$, $Jf_4=f_5$ \\ $\omega=f^{16}+f^{23}+f^{45}$} \\
\hline \xrowht{40pt}
\multirow{5}{*}{\vspace{-14em}$\frs{6.166}^p$} & \multirow{2}{*}{\vspace{-4em}$p=1$} & \multirow{2}{*}{\vspace{-4em}$\left( f^{24}+f^{35},f^{46},f^{56},-f^{26},-f^{36},0 \right)$} & \mlines{Balanced} & \mlines{$Jf_1=f_6$, $Jf_2=-f_4$, $Jf_3=f_5$ \\ $\omega=f^{16}-f^{24}+f^{35}$} \\
\cline{4-5} \xrowht{40pt}
& & & LCK & \mlines{$Jf_1=f_6$, $Jf_2=f_4$, $Jf_3=f_5$ \\ $\omega=f^{16}+f^{24}+f^{35}$} \\
\cline{2-5} \xrowht{40pt}
& \multirow{3}{*}{\vspace{-7em}$0<|p| \leq 1$, $p \neq 1$} & \multirow{3}{*}{\vspace{-7em}$\left( f^{24}+f^{35},f^{46},pf^{56},-f^{26},-pf^{36},0 \right)$} & \mlines{Balanced}  & \mlines{$Jf_1=f_6$, $Jf_2=-f_4$, $Jf_3=f_5$ \\ $\omega=f^{16}-f^{24}+f^{35}$} \\
\cline{4-5} \xrowht{40pt}
& & & LCK & \mlines{$Jf_1=f_6$, $Jf_2=f_4$, $Jf_3=f_5$ \\ $\omega=f^{16}+f^{24}+f^{35}$} \\
\cline{4-5} \xrowht{40pt}
& & & 1\textsuperscript{st}-Gauduchon & \mlines{$Jf_1=f_6$, $Jf_2=f_4$, $Jf_3=f_5$ \\ $\omega=|p-1|f^{16}+f^{23}+2f^{24}+f^{35}+f^{45}$} \\
\hline \xrowht{40pt}
$\frs{6.167}$ & -- & $\left( f^{24}+f^{35},f^{36},-f^{26},f^{26}+f^{56},f^{36}-f^{46},0 \right)$ & \mlines{Balanced} & \mlines{$Jf_1=f_6$, $Jf_2=f_3$, $Jf_4=f_5$ \\ $\omega=f^{16}+f^{23}+f^{45}$} \\
\hline
\end{tabular}}
\caption{Explicit Hermitian structures on six-dimensional strongly unimodular almost nilpotent Lie algebras with nilradical having one-dimensional commutator.} \label{table-exheis}
\end{center}
\end{table}

\newpage

\begin{table}[H]
\begin{center}
\addtolength{\leftskip} {-2cm}
\addtolength{\rightskip}{-2cm}
\scalebox{0.7}{
\begin{tabular}{|l|l|l|c|c|}
\hline \xrowht{15pt}
Lie algebra & Conditions & Structure equations & Structure type & Example \\ \hline
\hline \xrowht{40pt}
$\frs{6.145}^0$ & -- & $\left( f^{35}+f^{26},f^{45}-f^{16},f^{46},-f^{36},0,0 \right)$ & \mlines{Balanced} & \mlines{$Jf_1=f_2$, $Jf_3=f_4$, $Jf_5=f_6$ \\ $\omega=f^{12}+f^{34}+f^{56}$} \\
\hline \xrowht{58pt}
$\frs{6.147}^0$ & -- & $\left( f^{35}+f^{26}+f^{36},f^{45}-f^{16},f^{46},-f^{36},0,0 \right)$ & \mlines{Balanced} & \mlines{$J=\left( \begin{smallmatrix}
0 & -1 & 0 & -\frac{1}{2} & 0 & 0 \\
1 & 0 & -\frac{1}{2} & 0 & 0 & 0 \\
0 & 0 & 0 & -1 & 0 & 0 \\
0 & 0 & 1 & 0 & 0 & 0 \\
0 & 0 & 0 & 0 & 0 & -1 \\
0 & 0 & 0 & 0 & 1 & 0
 \end{smallmatrix} \right)$ \vspace{4pt}\\ $\omega=2f^{12}+f^{14}+f^{34}+f^{56}$} \\
\hline \xrowht{62pt}
$\frs{6.152}$ & -- & $\left( f^{35}+f^{26},f^{34}-f^{16}+f^{56},f^{45},-f^{56},f^{46},0 \right)$ & \mlines{LCB} & \mlines{$J=\left( \begin{smallmatrix}
0 & -1 & 0 & \frac{1}{2} & 0 & 0 \\
1 & 0 & 0 & 0 & -\frac{1}{2} & 0 \\
0 & 0 & 0 & 0 & 0 & \frac{1}{2} \\
0 & 0 & 0 & 0 & -1 & 0 \\
0 & 0 & 0 & 1 & 0 & 0 \\
0 & 0 & -2 & 0 & 0 & 0
 \end{smallmatrix} \right)$ \vspace{4pt}\\ $\omega=4f^{12}+2f^{25}-f^{36}+f^{45}$} \\
\hline \xrowht{52pt}
$\frs{6.154}^0$ & -- & $\left( f^{35}+f^{26},f^{34}-f^{16},f^{45},-f^{56},f^{46},0 \right)$ & \mlines{LCSKT \\ LCB} & \mlines{$Jf_1=f_2$, $Jf_3=f_6$, $Jf_4=-f_5$ \\ $\omega=f^{12}+f^{36}-f^{45}$ \medskip \\ $\mu=-2f^6$} \\
\hline
\end{tabular}}
\caption{Examples of special Hermitian structures on six-dimensional strongly unimodular almost nilpotent Lie algebras with nilradical having commutator of dimension at least two.} \label{table-exnew}
\end{center}
\end{table}

\end{document}